\documentclass{amsart}
\usepackage[latin1]{inputenc}
\usepackage[T1]{fontenc}
\RequirePackage{calrsfs}
\DeclareSymbolFont{rsfscript}{OMS}{rsfs}{m}{b}
\DeclareSymbolFontAlphabet{\mathrsfs}{rsfscript}
\usepackage{amsfonts,subcaption}
\usepackage{amsmath}
\usepackage{amssymb}
\usepackage{graphicx}
\usepackage{pstricks}
\usepackage{pst-grad}
\usepackage{multido}
\usepackage{amsthm}
\usepackage{array}
\usepackage[absolute]{textpos}
\setlength{\TPHorizModule}{1cm}
\setlength{\TPVertModule}{1cm}
\usepackage{geometry}
\usepackage[all]{xy}
\geometry{ hmargin=4cm, vmargin=3cm }

 \newcommand{\up}[1]{\textsuperscript{#1}}

\definecolor{purple}{rgb}{0.8,0.12,0.8}
\definecolor{orange}{rgb}{1.0,0.7,0.0}
\definecolor{pink}{rgb}{1,0.5,0.8}
\definecolor{blackg}{rgb}{0.1,0.25,0.1}
\definecolor{ForestGreen}{cmyk}{0.91,0,0.88,0.42}
\definecolor{Turquoise}{cmyk}{0.85,0,0.20,0}

\definecolor{GreenYellow}{cmyk}{0.15,0,0.69,0} 
\definecolor{Yellow}{cmyk}{0,0,1.,0} 
\definecolor{Goldenrod}{cmyk}{0,0.10,0.84,0} 
\definecolor{Dandelion}{cmyk}{0,0.29,0.84,0} 
\definecolor{Apricot}{cmyk}{0,0.32,0.52,0} 
\definecolor{Peach}{cmyk}{0,0.50,0.70,0} 
\definecolor{Melon}{cmyk}{0,0.46,0.50,0} 
\definecolor{YellowOrange}{cmyk}{0,0.42,1.,0} 
\definecolor{Orange}{cmyk}{0,0.61,0.87,0} 
\definecolor{BurntOrange}{cmyk}{0,0.51,1.,0} 
\definecolor{Bittersweet}{cmyk}{0,0.75,1.,0.24} 
\definecolor{RedOrange}{cmyk}{0,0.77,0.87,0} 
\definecolor{Mahogany}{cmyk}{0,0.85,0.87,0.35} 
\definecolor{Maroon}{cmyk}{0,0.87,0.68,0.32} 
\definecolor{BrickRed}{cmyk}{0,0.89,0.94,0.28} 
\definecolor{Red}{cmyk}{0,1.,1.,0} 
\definecolor{OrangeRed}{cmyk}{0,1.,0.50,0} 
\definecolor{RubineRed}{cmyk}{0,1.,0.13,0} 
\definecolor{WildStrawberry}{cmyk}{0,0.96,0.39,0} 
\definecolor{Salmon}{cmyk}{0,0.53,0.38,0} 
\definecolor{CarnationPink}{cmyk}{0,0.63,0,0} 
\definecolor{Magenta}{cmyk}{0,1.,0,0} 
\definecolor{VioletRed}{cmyk}{0,0.81,0,0} 
\definecolor{Rhodamine}{cmyk}{0,0.82,0,0} 
\definecolor{Mulberry}{cmyk}{0.34,0.90,0,0.02} 
\definecolor{RedViolet}{cmyk}{0.07,0.90,0,0.34} 
\definecolor{Fuchsia}{cmyk}{0.47,0.91,0,0.08} 
\definecolor{Lavender}{cmyk}{0,0.48,0,0} 
\definecolor{Thistle}{cmyk}{0.12,0.59,0,0} 
\definecolor{Orchid}{cmyk}{0.32,0.64,0,0} 
\definecolor{DarkOrchid}{cmyk}{0.40,0.80,0.20,0} 
\definecolor{Purple}{cmyk}{0.45,0.86,0,0} 
\definecolor{Plum}{cmyk}{0.50,1.,0,0} 
\definecolor{Violet}{cmyk}{0.79,0.88,0,0} 
\definecolor{RoyalPurple}{cmyk}{0.75,0.90,0,0} 
\definecolor{BlueViolet}{cmyk}{0.86,0.91,0,0.04} 
\definecolor{Periwinkle}{cmyk}{0.57,0.55,0,0} 
\definecolor{CadetBlue}{cmyk}{0.62,0.57,0.23,0} 
\definecolor{CornflowerBlue}{cmyk}{0.65,0.13,0,0} 
\definecolor{MidnightBlue}{cmyk}{0.98,0.13,0,0.43} 
\definecolor{NavyBlue}{cmyk}{0.94,0.54,0,0} 
\definecolor{RoyalBlue}{cmyk}{1.,0.50,0,0} 
\definecolor{Blue}{cmyk}{1.,1.,0,0} 
\definecolor{Cerulean}{cmyk}{0.94,0.11,0,0} 
\definecolor{Cyan}{cmyk}{1.,0,0,0} 
\definecolor{ProcessBlue}{cmyk}{0.96,0,0,0} 
\definecolor{SkyBlue}{cmyk}{0.62,0,0.12,0} 
\definecolor{Turquoise}{cmyk}{0.85,0,0.20,0} 
\definecolor{TealBlue}{cmyk}{0.86,0,0.34,0.02} 
\definecolor{Aquamarine}{cmyk}{0.82,0,0.30,0} 
\definecolor{BlueGreen}{cmyk}{0.85,0,0.33,0} 
\definecolor{Emerald}{cmyk}{1.,0,0.50,0} 
\definecolor{JungleGreen}{cmyk}{0.99,0,0.52,0} 
\definecolor{SeaGreen}{cmyk}{0.69,0,0.50,0} 
\definecolor{Green}{cmyk}{1.,0,1.,0} 
\definecolor{ForestGreen}{cmyk}{0.91,0,0.88,0.12} 
\definecolor{PineGreen}{cmyk}{0.92,0,0.59,0.25} 
\definecolor{LimeGreen}{cmyk}{0.50,0,1.,0} 
\definecolor{YellowGreen}{cmyk}{0.44,0,0.74,0} 
\definecolor{SpringGreen}{cmyk}{0.26,0,0.76,0} 
\definecolor{OliveGreen}{cmyk}{0.64,0,0.95,0.40} 
\definecolor{RawSienna }{cmyk}{0,0.72,1.,0.45} 
\definecolor{Sepia}{cmyk}{0,0.83,1.,0.70} 
\definecolor{Brown}{cmyk}{0,0.81,1.,0.60} 
\definecolor{Tan}{cmyk}{0.14,0.42,0.56,0} 
\definecolor{Gray}{cmyk}{0,0,0,0.50} 
\definecolor{Black}{cmyk}{0,0,0,1.} 
\definecolor{White}{cmyk}{0,0,0,0} 

\newcommand{\hs}{\hat{s}}

\newcommand{\cA}{\mathcal{A}}

\newcommand{\cC}{\mathcal{C}}

\newcommand{\cF}{\mathcal{F}}

\newcommand{\cH}{\mathcal{H}}

\newcommand{\cL}{\mathcal{L}}
\newcommand{\cM}{\mathcal{M}}

\newcommand{\cO}{\mathcal{O}}
\newcommand{\cP}{\mathcal{P}}

\newcommand{\cR}{\mathcal{R}}

\newcommand{\cLR}{\cL\cR}

\newcommand{\sg}{\langle}
\newcommand{\sd}{\rangle}

\renewcommand{\sp}{{\sf p}}


\newcommand{\bB}{\mathbf{B}}

\newcommand{\bP}{\mathbf{P}}




\newcommand{\bm}{\mathbf{m}}


\newcommand{\fH}{\mathfrak{H}}
\newcommand{\fI}{\mathfrak{I}}

\newcommand{\nZ}{\mathbb{Z}}
\newcommand{\nR}{\mathbb{R}}
\newcommand{\nN}{\mathbb{N}}

\newcommand{\nC}{\mathbb{C}}

\newcommand{\al}{\alpha}

\newcommand{\si}{\sigma}
\newcommand{\la}{\lambda}
\newcommand{\ga}{\gamma}

\newcommand{\eps}{\epsilon}
\newcommand{\om}{\omega}


\newcommand{\De}{\Delta}

\newcommand{\tA}{\tilde{A}}

\newcommand{\tC}{\tilde{C}}



\newcommand{\ov}{\overline}

\newcommand{\ra}{\rightarrow}


\newcommand{\quand}{\quad\text{and}\quad}


\def\SS{\scriptstyle}
\def\SSS{\scriptscriptstyle}

\def\alc{{\mathrm{Alc}}}

\def\add{\hskip0.3mm{\SSS{\stackrel{\bullet}{}}}\hskip0.3mm}

\def\eqna{\begin{eqnarray*}}
\def\endeqna{\end{eqnarray*}}

\newtheorem{Th}{Theorem}[section]
\newtheorem{Lem}[Th]{Lemma}
\newtheorem{Cor}[Th]{Corollary}
\newtheorem{Prop}[Th]{Proposition}
\newtheorem{Def-Prop}[Th]{Definition-Proposition}

\newtheorem{convention}[Th]{Convention}
\theoremstyle{definition}
\newtheorem{Def}[Th]{Definition}

\newtheorem{Exa}[Th]{Example}

\newtheorem*{Claimn1}{Claim 1}
\newtheorem*{Claimn2}{Claim 2}
\newtheorem*{Claimn3}{Claim 3}
\newtheorem*{Claimn4}{Claim 4}
\theoremstyle{remark}
\newtheorem{Rem}[Th]{Remark}

\usepackage{url}
\begin{document}


\title{Cellularity of the lowest two-sided ideal of an affine Hecke algebra}
\author{J\'er\'emie Guilhot}
\date{\today}
\begin{textblock}{15}(4,25.5)
{\small \textsc{ Jeremie Guilhot: Laboratoire de Math\'ematique et de Physique Th\'eorique, UMR 7350, 37000 Tours}\\
{\it Email address} \url{jeremie.guilhot@lmpt.univ-tours.fr}}
\end{textblock}

\begin{abstract} In this paper we show that the lowest two-sided ideal of an  affine Hecke algebra is affine cellular for all choices of parameters. We explicitely describe the cellular basis and we show that the basis elements have a nice decomposition when expressed in the Kazhdan-Lusztig basis. In type $A$ we provide a combinatorial description of this decomposition in term of number of paths. 
\end{abstract}

\maketitle

\section{Introduction}

The notion of cellular algebra was first introduced by Graham and Lehrer in \cite{GL} for finite dimensional algebras. Roughly speaking, a cellular algebra $A$ possesses a distinguished basis (possibly more than one), known as a "cellular basis", that is particularly well-adapted to studying the representation theory of $A$. Indeed, a cellular basis yields a filtration of $A$ with composition factors isomorphic to the cell modules. Further, the structure constants with respect to the cellular basis  define bilinear forms on cell modules and one can show that the quotient of a cell module by the radical of this bilinear form is either 0 or irreducible.  In this way, one obtains a complete set of isomorphism classes of simple modules for the given cellular algebra. Examples of cellular algebras include many finite-dimensional Hecke algebras \cite{geck5}.\\

Recently, Koenig and Xi \cite{KX} have generalized this concept to possibly infinite dimensional algebras over a principal ideal domain $k$ by introducing the notion of  affine cellular algebras. In a cellular algebra, cell modules are isomorphic to matrix rings with coefficients in $k$ and twisted multiplication with respect to the bilinear form. In the affine case, they still are isomorphic to matrix rings with twisted multiplication but the matrix ring is now defined over a quotient of a polynomial ring over $k$. As in the finite dimensional case, affine cellular structure for an algebra $A$ yields a parametrisation of the irreducible $A$-modules. In their paper, Koenig and Xi have shown that  extended affine Hecke algebras of type $A$ are affine cellular.  In \cite{jeju-mimi}, it is shown that affine Hecke algebras of rank 2 are affine cellular for all generic choices of parameters.\\

In this paper we are concerned with extended affine Weyl groups. Let $W_e$ be an extended affine Weyl group and let $W_0$ be the associated finite Weyl group. Then it is a well-known fact that the set $c_0:=\{w\in W_e\mid w=xw_0y, \ell(w)=\ell(x)+\ell(w_0)+\ell(y)\}$ is a Kazhdan-Lusztig two-sided cell and that it is the lowest one with respect to a certain order on cells. We can associate to this cell a two-sided ideal $\cM_0$ of the corresponding affine Hecke algebra (with possibly unequal parameters).\\

The purpose of this paper is twofold:
\begin{enumerate}
\item Show that the lowest two-sided ideal $\cM_0$ is affine cellular and explicitely construct a cellular basis; see Section \ref{main1}.
\item Study the decomposition of the previously found cellular basis in the Kazhdan-Lusztig basis; see Section \ref{dec-cel-basis}.
\end{enumerate}
It is in type $A$ that point (2) above is the nicest :  we will show in section \ref{cel-b-typeA} that the coefficients that appear in the expression of the cellular basis when expressed in the Kazhdan-Lusztig basis can be interpreted as the number of certain kind of paths in a Weyl chamber.  


\section{Affine Weyl groups and Geometric realisation}

\medskip

In this paper, we fix an euclidean $\nR$-vector space $V$ of dimension $r \ge 1$ and we denote 
by $\Phi$ an {\it irreducible} root system in $V$ of rank $r$: the scalar product will be denoted 
by $(\ ,\ ) : V \times V \longrightarrow \nR$. The dual of $V$ will be denoted by $V^*$ 
and $\langle\ ,\ \rangle : V \times V^* \longrightarrow \nR$ will denote the canonical pairing. 
If $\al \in \Phi$, we denote by $\al^\vee \in V^*$ the associated coroot (if $x \in V$, then 
$\langle x,\al^{\vee} \rangle = 2 (x,\al)/(\al,\al)$) and by $\Phi^{\vee}$ the dual root system. 
We will denote by $Q$ the root lattice. We fix a positive system $\Phi^{+}$ and a simple system $\Delta=\{\al_1,\ldots,\al_n\}\subset \Phi^+$.   

\subsection{Affine Weyl group}

For $\al \in \Phi^+$ and $n \in \nZ$, we set
$$H_{\al,n}=\{x \in V~|~\langle x,\al^{\vee} \rangle = n\}\quand \cF=\{H_{\al,n}~|~\al \in \Phi^+\text{ and }n \in \nZ\}.$$
We will say that $H$ is of {\it direction} $\al$ if $H$ is of the form $H_{\al,k}$ for some $k\in\nZ$ and, for $\al\in \Phi^{+}$, we denote by $\cF_{\al}$ the set of hyperplanes of direction $\al$. We will say that $H$ and $H'$ are parallel if they have same directions. For  $H=H_{\al,k}$ with $\al\in\Phi^+$, we set
$$H^+:=\{\la\in V\mid \sg \la,\al^{\vee}\sd> k\}\quand H^-:=\{\la\in V\mid \sg \la,\al^{\vee}\sd< k\}.$$

\medskip

If $H \in \cF$, we denote by $\si_H$ the orthogonal reflection with respect to $H$. Then the group $W_a$ generated by $\{\si_H\mid H\in \cF\}$ is an affine Weyl group of type $\Phi^\vee$.  Let $\al_{0}\in \Phi^+$ be such that $\al_{0}^{\vee}$ is the highest coroot in $\Phi^{\vee}$. Then $W_a$ is a Coxeter group generated by $S_a:=S_0\cup \{\si_{\al_0,1}\}$ where $S_0:=\{\si_{\al,0}\mid \al\in \De\}$. The Weyl group $W_0$ of $\Phi$ is generated by $S_0$. The translation $t_\al$ of vector $\al\in Q$ indeed lie in $W_a$ as have $\si_{\al,1}\si_{H_{\al,0}}=t_{\al}$. In fact it is a well-known fact that $W_a\simeq W_0\ltimes Q$. 

\medskip

To simplify the notation we will write $\si_i$ instead of $\si_{\al_i,0} $ for all $1\leq i\leq n$ and $\si_0$ instead of $\si_{\al_0,1}$. Then $W_0$ is generated by $S_0:=\{ \si_1,\ldots,\si_n\}$ and $W_a$ is generated by $S_a:=\{\si_0,\ldots,\si_n\}$.

\medskip

An {\it alcove} is a connected component of the set
$$V-\underset{H\in\mathcal{F}}{\bigcup}H.$$
Then $W_a$ acts simply transitively on the set of alcoves $\alc(\cF)$. 
Recall also that, if $A$ is an alcove, then its closure $\overline{A}$ is 
a fundamental domain for the action of $W_a$ on $V$. We denote by $A_{0}$ the fundamental alcove associated to $\Phi$
$$A_{0}=\{x\in V\mid 0<\sg x,\check{\alpha}\sd<1 \text{ for all $\alpha\in\Phi^{+}$}\}$$
and by $\cC^+$ the fundamental Weyl chamber 
$$\cC^+=\{\la\in V\mid \sg \la,\al^\vee\sd> 0\}.$$
We also set $\cC^-=\{\la\in V\mid \sg \la,\al^\vee\sd< 0\}=-\cC^+$.  

\bigskip

The group $W_a$ acts on the set of faces (the codimension 1 facets) of alcoves. We denote by $S$ the set of $W_a$-orbits in the set of faces. If $A\in\alc(\cF)$, then the faces of $A$ is a set of representatives of $S$ since $\ov{A}$ is a fundamental domain for the action of $W_a$. If a face $f$ is contained in the orbit $s\in S$, we say that $f$ is of type $s$. To each $s\in S$ we can associate an involution $A\rightarrow sA$ of $\alc(\cF)$: the alcove $sA$ is defined to be $A\si_{H_s}$ where $H_s$ is the unique hyperplane that contains the face of type $s$ of $A$. Let $W$ be the group generated by all such involutions. Then $(W,S)$ is a Coxeter system and it is  isomorphic to the affine Weyl group $W_{a}$. We shall regard $W$ as acting on the left of $\alc(\cF)$. The action of $W_a$ on the right and the action of $W$ on the left commute. \\

Let $A\in\alc(\cF)$. Then there exists a unique $w\in W$ such that $wA_{0}=A$.   We will freely identify $W$ with the set of alcoves $\alc(\cF)$.
 We set $S=\{s_0,\ldots,s_n\}$ in such a way that  $s_iA_0=A_0\si_i\text{ for all $0\leq i\leq n$}$. We introduce the following notation:
\begin{itemize}
\item for $w\in W$, let $\si_w$ be the unique element of $W_a$ such that $wA_0=A_0\si_w$;
\item for $A\in \alc(\cF)$, let $\si_A$ be the unique element of $W_a$ such that $A_0\si_A=A$;
\item for $\al\in Q$  let $p_\al$ be the unique element of $W$ such that $p_\al A_0=A_0t_\al$.
\end{itemize}

\begin{Rem}
(1). Let us investigate the relationship between the  actions of $W_a$ and $W$. Let $w\in W$ and let $s_{i_n}\ldots s_{i_1}$ be a reduced expression of $w$. If we denote by $H_{i_k}$ the unique hyperplane separating $s_{i_{k}}\ldots s_{i_n}A_0$ and $s_{i_{k-1}}\ldots s_{i_1}A_0$ , we see that  the set of hyperplanes which separates $A_0$ and $wA_0$ is $\{H_{i_1},\ldots, H_{i_n}\}$ and we get
$$wA_0=s_{i_n}\ldots s_{i_1}A_0=A_0\si_{H_{i_1}}\si_{H_{i_{2}}}\ldots \si_{H_{i_n}}.$$
But we also have
$$wA_0=s_{i_n}\ldots s_{i_1}A_0=A_0\si_{i_n}\si_{i_{n-1}}\ldots \si_{i_1}$$
From there, we can show that 
$$\{H_{i_1},\ldots, H_{i_n}\}=\{H_{\al_{i_1,0}},H_{\al_{i_2,0}}\si_{i_1}, H_{\al_{i_3,0}}\si_{i_2}\si_{i_1},\ldots,H_{\al_{i_n,0}}\si_{n-1}\ldots \si_1\}.$$
We refer to \cite[Chapter 4.5]{Hum} for details. \\

(2). Let $A\in \alc(\cF)$. For all $\al\in Q$, we have
$$p_\al A=p_\al A_0\si_A=A_0t_\al \si_A=A_0\si_A t_{\al\si_A}=At_{\al\si_A}.$$
Hence $p_\al A$ is a translate of $A$ of vector $\al\si_A$. 
\end{Rem}

\begin{Exa}
\label{typeA}
Let us recall the classic realisation of the root system of type $A$. Let $E$ be a $n+1$-dimensional $\nR$-vector space with basis $\epsilon_i$ $(1\leq i\leq n+1)$ and let $V$ be the $n$-dimensional  subspace of $E$ define by 
$$V=\{v=(v_1,\ldots,v_{n+1})\in V\mid \sum v_i=0\}.$$
Then the root system of type $A$ can be describe as follows 
\begin{itemize}
\item Roots: $\eps_i-\eps_j$, $1\leq i,j\leq n+1$;
\item Simple roots: $\al_i:=\eps_i-\eps_{i+1}$ for all $1\leq i\leq n$;
\item Positive roots: $\eps_i-\eps_j=\sum_{i\leq k<j}\al_k$.
\end{itemize}
When $n=2$, the affine Weyl group $\tA_2$ has the following Dynkin diagram:
$$\begin{pspicture}(-.5,-.5)(2,1.4)
\psset{unit=.8cm}
\rput(0,0){\circle*{0.2}}
\rput(-.1,-.3){{\tiny $s_1$}}
\psline(0,0)(2,0)
\rput(2,0){\circle*{0.2}}
\rput(1.9,-.3){{\tiny $s_2$}}
\rput(1,1){\circle*{0.2}}
\rput(.9,1.3){{\tiny $s_0$}}
\psline(-.05,0.05)(.9,1.05)
\psline(1.8,0.05)(.9,1.05)
\end{pspicture}
$$
We draw the alcoves in this type in Figure 1. In Figure 1.(A), the dashed arrows represent the root system, the gray alcove represents the fundamental alcove $A_0$, and the dashed alcoves represent the set $-\cC^+$.
In Figure 1.(B) we describe the orbits of the faces of $A_0$ under the action of the group $W_a$. The dashed faces of $A_0$ corresponds to $s_1$, the plain one to $s_2$ and the dotted one to $s_0$. Then, the unique element of $W_a$ which sends $A_0$ to $A$ is $s_1s_3s_2s_1$.
\psset{unit=.8cm}
\begin{figure}[h]
        \centering
        \begin{subfigure}[b]{0.3\textwidth}
                \centering
\begin{pspicture}(-2,-2)(2,2)
\pspolygon[fillstyle=solid,fillcolor=lightgray](0,0)(.5,.866)(-.5,.866)
\psset{linewidth=.1mm}
\pspolygon[fillstyle=vlines](0,0)(-1,-1.732)(1,-1.732)

\psset{linewidth=.3mm}
\psline[linestyle=dashed]{->}(0,0)(1.5,.866)
\psline[linestyle=dashed]{->}(0,0)(-1.5,.866)
\psline[linestyle=dashed]{->}(0,0)(-1.5,-.866)
\psline[linestyle=dashed]{->}(0,0)(0,-1.732)
\psline[linestyle=dashed]{->}(0,0)(1.5,-.866)
\psline[linestyle=dashed]{->}(0,0)(0,1.732)

\psset{linewidth=.2mm}
\psline(-1,1.732)(1,1.732)
\psline(-1.5,.866)(1.5,.866)
\psline(-2,0)(2,0)
\psline(-1.5,-.866)(1.5,-.866)
\psline(-1,-1.732)(1,-1.732)

\psline(1,1.732)(2,0)
\psline(2,0)(1,-1.732)
\psline(-1,1.732)(-2,0)
\psline(-2,0)(-1,-1.732)

\psline(-1.5,.866)(0,-1.732)
\psline(-1,1.732)(1,-1.732)
\psline(0,1.732)(1.5,-.866)

\psline(1.5,.866)(0,-1.732)
\psline(1,1.732)(-1,-1.732)
\psline(0,1.732)(-1.5,-.866)

\end{pspicture}
                \caption{Alcove $A_0$ and $\cC^-$}
                \label{rootA}
        \end{subfigure}
        \qquad\qquad
                        \begin{subfigure}[b]{0.3\textwidth}
                \centering
                \begin{pspicture}(-2,-2)(2,2)

\rput(0,0.5){{\tiny $A_0$}}
\rput(1,-1.2){{\tiny $A$}}


\psset{linewidth=.2mm}
\psline[linestyle=dashed](-1,1.732)(0,1.732)
\psline(0,1.732)(1,1.732)

\psline(-1.5,.866)(-.5,.866)
\psline[linestyle=dotted](-.5,.866)(.5,.866)
\psline[linestyle=dashed](.5,.866)(1.5,.866)

\psline[linestyle=dotted](-2,0)(-1,0)
\psline[linestyle=dashed](-1,0)(0,0)
\psline(0,0)(1,0)
\psline[linestyle=dotted](1,0)(2,0)

\psline(-1.5,-.866)(-.5,-.866)
\psline[linestyle=dotted](-.5,-.866)(.5,-.866)
\psline[linestyle=dashed](.5,-.866)(1.5,-.866)

\psline[linestyle=dashed](-1,-1.732)(0,-1.732)
\psline(0,-1.732)(1,-1.732)

\psline(1,1.732)(1.5,.866)
\psline[linestyle=dashed](1.5,.866)(2,0)

\psline[linestyle=dashed](2,0)(1.5,-.866)
\psline(1.5,-.866)(1,-1.732)

\psline[linestyle=dashed](-1,1.732)(-1.5,.866)
\psline(-1.5,.866)(-2,0)

\psline(-2,0)(-1.5,-.866)
\psline[linestyle=dashed](-1.5,-.866)(-1,-1.732)

\psline[linestyle=dashed](-1.5,.866)(-1,0)
\psline[linestyle=dotted](-1,0)(-.5,-.866)
\psline(-.5,-.866)(0,-1.732)

\psline(1.5,.866)(1,0)
\psline[linestyle=dotted](1,0)(.5,-.866)
\psline[linestyle=dashed](.5,-.866)(0,-1.732)

\psline[linestyle=dotted](-1,1.732)(-.5,.866)
\psline(-.5,.866)(0,0)
\psline[linestyle=dashed](0,0)(.5,-.866)
\psline[linestyle=dotted](.5,-.866)(1,-1.732)

\psline[linestyle=dotted](1,1.732)(.5,.866)
\psline[linestyle=dashed](.5,.866)(0,0)
\psline(0,0)(-.5,-.866)
\psline[linestyle=dotted](-.5,-.866)(-1,-1.732)

\psline[linestyle=dashed](0,1.732)(.5,.866)
\psline[linestyle=dotted](.5,.866)(1,0)
\psline(1,0)(1.5,-.866)

\psline(0,1.732)(-.5,.866)
\psline[linestyle=dotted](-.5,.866)(-1,0)
\psline[linestyle=dashed](-1,0)(-1.5,-.866)
\end{pspicture}
                \caption{Orbits of faces of $A_0$}
                \label{orb}
        \end{subfigure}
              \caption{Roots, hyperplanes and alcoves in type $A_2$}\label{A2}
\end{figure}
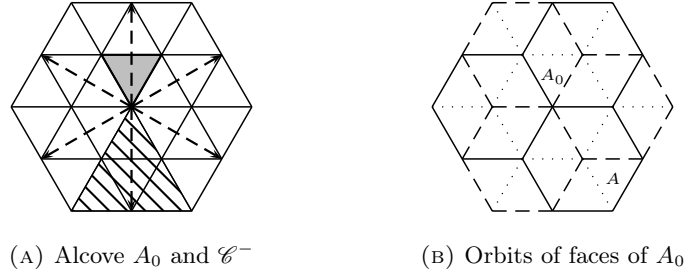

\end{Exa}

\subsection{Weight functions and $L$-weights}
\label{Lweight}
Let $L : W\longrightarrow \nN$ be a weight function on $W$, that is a function which satisfies $L(ww')=L(w)+L(w')$ whenever $\ell(ww')=\ell(w)+\ell(w')$ where $\ell$ denotes the usual length function and such that $L(w)=0$ if and only if $w$ is the identity of $W$. Recall that this implies the following property: if $s,t$ are conjugate in $W$ then $L(s)=L(t)$. (Note that the length function is indeed a positive weight function.) From now on and until the end of this paper, we will fix a positive weight function $L$ on~$W$. 

\medskip

Let $x,y\in W$. We will write $w=x\add y$ if and only if $w=xy$ and $\ell(w)=\ell(x)+\ell(y)$. By definition of weight functions, we see that  if we have $w=x\add y$ then $L(w)=L(x)+L(y)$. We generalise this notation to more than two words in a straightforward fashion. 

\medskip

Let $H\in \cF$ and assume that $H$ supports a face of type $t\in S$. Then, following \cite{bremke}, we set $L_H=L(t)$. This is well-defined since, according to    \cite[Lemma 2.1]{bremke}, if $H$ supports a face of type $s$ and $t$ then $s$ are conjugate in $W$ and $L(s)=L(t)=L_H$.  For $\al\in \Phi^{+}$ we set 
$L_{\al}:=\max_{H\in \cF_{\al}} L_{H}$. We then say that $H$ is of maximal weight if $L_{H}=L_{\al}$ where $H$ is of direction $\al$. The only case where two parallel hyperplanes might have different weights is type $C$. In all other cases, parallel hyperplanes have same weight and therefore all hyperplanes are of maximal weight. 

\begin{quotation}
\begin{convention}\label{convention}
{\it 
In the case where $W$ is of type
$\tilde{C}_{r}$ with Dynkin diagram 
$$\begin{picture}(100,20)
\put(5,2){\circle*{6}}\put(3,9){$\SS{s_0}$}
\put(30,2){\circle*{6}}\put(25,9){$\SS{s_1}$}
\put(80,2){\circle*{6}}\put(70,9){$\SS{s_{n-1}}$}
\put(105,2){\circle*{6}}\put(103,9){$\SS{s_n}$}
\put(7.7,3.2){\line(1,0){19.4}}
\put(7.7,0.8){\line(1,0){19.4}}
\put(82.7,3.2){\line(1,0){19.4}}
\put(82.7,0.8){\line(1,0){19.4}}
\put(33,2){\line(1,0){12}}
\put(77,2){\line(-1,0){12}}
\put(48,-1){$\cdots$}
\end{picture}
$$
by symmetry, we may (and we will) assume that 
$L(s_0)\ge L(s_n)$. (Recall that $\tC_{1}$ is also of type $\tA_{1}$)}
\end{convention}
\end{quotation}

\medskip

Let $\al\in \Phi^{+}$. We set
$$b_{\al}=\begin{cases}
1 &\text{if $L_{H_{\al,0}}=L_{H_{\al,1}}$}\\
2 &\text{otherwise}
\end{cases}$$
We see that $b_{\al}=2$ only in type $C$ when $L(s_0)>L(s_n)$.

%

\medskip

For $\la\in V$  we set 
$$L_\la:=\sum_{H\in \cF,\la\in H} L_H.$$ 
If we denote by $W_\la$ the subgroup of $W$ which stabilises the set of alcoves which contains $\la$ in their closure, then $W_\la$ is a parabolic subgroup of $W$, generated by $S_\la:=S \cap W_\la$ and we have $L_\la=L(w_\la)$ where $w_\la$ is the longest element of $W_\la$. 
Note that, by definition, we have  $L_{\la\si}=L_{\la}$ for all $\si\in W_a$. We set $\nu_L:=\max_{\la\in V} L_\la$. We then say that $\la\in V$ is an $L$-weight if $L_\la=\nu_L$. We will denote by $P$ the set of all $L$-weights. With the convention above, $0$ is always a special point and $W_0$ is isomorphic to the Weyl group associated to $W$ and therefore our notation is coherent with the definition of $W_0$ given before. \\

Let $\la$ be an $L$-weight. A quarter with vertex $\la$ is a connected component of the set 
$$V-\bigcup_{\la\in H} H.$$

\begin{Exa}
When $L=\ell$ then $L_{H}=1$ for all $H\in \cF$ and $\nu_{\ell}=|\Phi^{+}|$. Then the following are equivalent 
\begin{enumerate}
\item $\la\in V$ is an $L$-weight;
\item $\la$ lies in the intersection of $|\Phi^{+}|$ hyperplanes;
\item for all $\al\in \Phi^{+}$, there exists $\al_{\la}\in \nZ$ such that $\sg\la,\al \sd=\al_{\la}$ (i.e. $\la\in H_{\al,\al_{\la}}$);
\item $\la$ is a weight according to the definition in Bourbaki \cite[Proposition 26 and after]{bourbaki}.
\end{enumerate}
 \end{Exa}

\medskip

We will denote by  $P^{+}$ be the set of dominant $L$-weights, that is the set of $L$-weights which lies in the closure of the fundamental Weyl chamber $\cC^+$. Finally let $\bP^+$ be the set of fundamental $L$-weights, that is the dual basis of 
$$\{b_{\al}\al^{\vee}\mid \al\in \De \}\subset \Phi^{{\vee}}.$$

\begin{Rem}
Once again, it is clear that the notion of dominant $L$-weights and fundamental $L$-weights coincide with the ususal one when $L=\ell$. In fact (once again!) the only case when the notion of weights and $L$-weights do not coincide is when $W$  is of type
$\tC_{r}$ and $L(s_0)> L(s_n)$. 
\end{Rem}

\begin{Exa}
The next figure describes the $L$-weights of $\tC_{2}$. The thick arrows represent the positive roots $\Phi^{+}$ and the gray alcoves represents the fundamental alcove.  In the case $L(s_0)=L(s_n)$, all the circled points are $L$-weights. In the case $L(s_0)>L(s_n)$, only the gray points are  $L$-weights.
\begin{center}
\begin{pspicture}(-4.5,-3)(4.5,3)
\psset{unit=.9cm}
\pspolygon[fillstyle=solid,fillcolor=lightgray](0,0)(0,1)(.5,.5)

\rput(2,2){\pscircle[fillstyle=solid,fillcolor=gray]{.2}}
\rput(0,2){\pscircle[fillstyle=solid,fillcolor=gray]{.2}}
\rput(-2,2){\pscircle[fillstyle=solid,fillcolor=gray]{.2}}


\rput(2,0){\pscircle[fillstyle=solid,fillcolor=gray]{.2}}
\rput(0,0){\pscircle[fillstyle=solid,fillcolor=gray]{.2}}
\rput(-2,0){\pscircle[fillstyle=solid,fillcolor=gray]{.2}}

\rput(2,-2){\pscircle[fillstyle=solid,fillcolor=gray]{.2}}
\rput(0,-2){\pscircle[fillstyle=solid,fillcolor=gray]{.2}}
\rput(-2,-2){\pscircle[fillstyle=solid,fillcolor=gray]{.2}}



\rput(-1,1){\pscircle[fillstyle=solid,fillcolor=gray]{.2}}
\rput(1,1){\pscircle[fillstyle=solid,fillcolor=gray]{.2}}


\rput(-1,-1){\pscircle[fillstyle=solid,fillcolor=gray]{.2}}
\rput(1,-1){\pscircle[fillstyle=solid,fillcolor=gray]{.2}}


\rput(-1,2){\pscircle{.2}}
\rput(1,2){\pscircle{.2}}

\rput(-1,0){\pscircle{.2}}
\rput(1,0){\pscircle{.2}}

\rput(-1,-2){\pscircle{.2}}
\rput(1,-2){\pscircle{.2}}



\rput(-2,1){\pscircle{.2}}
\rput(0,1){\pscircle{.2}}
\rput(2,1){\pscircle{.2}}

\rput(-2,-1){\pscircle{.2}}
\rput(0,-1){\pscircle{.2}}
\rput(2,-1){\pscircle{.2}}


\psline[linewidth=.7mm]{->}(0,0)(0,2)
\psline[linewidth=.7mm]{->}(0,0)(1,1)
\psline[linewidth=.7mm]{->}(0,0)(2,0)
\psline[linewidth=.7mm]{->}(0,0)(-1,1)

\psline(-2,2)(2,2)
\psline(-2,1)(2,1)
\psline(-2,0)(2,0)
\psline(-2,-1)(2,-1)
\psline(-2,-2)(2,-2)

\psline(-2,-2)(-2,2)
\psline(-1,-2)(-1,2)
\psline(0,-2)(0,2)
\psline(1,-2)(1,2)
\psline(2,-2)(2,2)

\psline(0,0)(1,1)
\psline(0,1)(1,0)

\psline(1,0)(2,1)
\psline(1,1)(2,0)

\psline(-1,0)(0,1)
\psline(-1,1)(0,0)

\psline(-2,0)(-1,1)
\psline(-2,1)(-1,0)

\psline(0,1)(1,2)
\psline(0,2)(1,1)

\psline(1,1)(2,2)
\psline(1,2)(2,1)

\psline(-1,1)(0,2)
\psline(-1,2)(0,1)

\psline(-2,1)(-1,2)
\psline(-2,2)(-1,1)

\psline(0,0)(1,-1)
\psline(0,-1)(1,0)

\psline(1,0)(2,-1)
\psline(1,-1)(2,0)

\psline(-1,0)(0,-1)
\psline(-1,-1)(0,0)

\psline(-2,0)(-1,-1)
\psline(-2,-1)(-1,0)

\psline(0,-1)(1,-2)
\psline(0,-2)(1,-1)

\psline(1,-1)(2,-2)
\psline(1,-2)(2,-1)

\psline(-1,-1)(0,-2)
\psline(-1,-2)(0,-1)

\psline(-2,-1)(-1,-2)
\psline(-2,-2)(-1,-1)

\rput(0,-2.7){\textsc{Figure 3}. $L$-weights of $\tC_{2}$.}

\end{pspicture}\end{center}
\end{Exa}

\subsection{Extended affine Weyl groups}
The extended affine Weyl group is defined by $W_e=W_0\ltimes P$; it acts naturally on (the right) of $V$ and on  $\alc(\cF)$ but the action is no longer faithful. If we denote by $\Pi$ the stabiliser of $A_0$ in $W_e$ then we have $W_e=\Pi\ltimes W_a$. Further the group $\Pi$ is isomorphic to $P/Q$, hence abelian, and its action on  $W_a$ is given by an automorphism of the Dynkin diagram; see Planches I--IX in  \cite{bourbaki}. We use the following notation. Let $\pi\in\Pi$. Since $\pi$ permutes the generators $S_a$ of $W_a$, it induces a permutation of $\{0,\ldots,n\}$. We will still denote by $\pi$ this permutation so that we have  $\pi.\si_i=\si_{\pi(i)}$. We write $\si^\pi$ for the image of $\si$ under $\pi$ that is 
$$\si^\pi=\si_{\pi(i_1)}\ldots \si_{\pi(i_n)}\text{ where }\si=\si_{i_1}\ldots\si_{i_n}.$$
 
We have $W_e=\Pi\ltimes W_a\simeq \Pi\ltimes W$ where the action of $\Pi$ on $W$ is of course  given by $\pi\cdot s_i=s_{\pi(i)}$. 
We would like to define two commutative and faithful (left and right) actions of $W_e$ on a set of alcoves.  To this end, we introduce the set of extended alcoves, denoted by $\alc_e(\cF)$, which is the cartesian product  $\Pi\times\alc(\cF)$.  Then for $\pi,\pi'\in \Pi$, $w\in W$ and $\si\in W_a$ we set 
$$\begin{array}{lllllcccccc}
\pi'\cdot (\pi,A)=(\pi'\pi,A) & (\pi,A)\cdot \pi'=(\pi\pi',A_0\si^{\pi'}_A)\\
w\cdot (\pi,A)=(\pi,w^\pi A) & (\pi,A)\cdot \si=(\pi,A\si)\\
\end{array}$$
 
\begin{Prop}
The two actions are faithful and they commute. 
\end{Prop}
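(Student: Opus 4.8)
The plan is to verify the claims directly from the definitions of the four actions, treating the left and right actions separately and then checking commutativity by a case analysis on the ``pure'' pieces $\Pi$, $W$ (for the left action) and $\Pi$, $W_a$ (for the right action). First I would record that the left action is genuinely an action: for $\pi,\pi'\in\Pi$ and $w,w'\in W$ one computes $w'\cdot(w\cdot(\pi,A))=w'\cdot(\pi,w^\pi A)=(\pi,w'^\pi w^\pi A)=(\pi,(w'w)^\pi A)=(w'w)\cdot(\pi,A)$, using that $\pi$ acts on $W$ by a group automorphism so $w'^\pi w^\pi=(w'w)^\pi$; the mixed compatibility $\pi'\cdot(w\cdot(\pi,A))=(\pi'w)\cdot(\pi,A)$ reduces to the semidirect product relation $w\mapsto{}^{\pi'}(\cdot)$ conjugating correctly, i.e. $\pi'\pi=(\pi'\pi)$ on the first coordinate and ${}^{\pi'\pi}w$ on the second. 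Symmetrically, the right action is an action because $W_a$ acts on $\alc(\cF)$ on the right simply transitively (stated in the excerpt) and $\si\mapsto\si^{\pi'}$ together with $\si_A$ is set up so that $((\pi,A)\cdot\pi')\cdot\pi''=(\pi,A)\cdot(\pi'\pi'')$; here I would use the identity $\si^{\pi'}_A$ relating the ``address'' $\si_A$ of an alcove to the twisted element, and the fact that $\Pi$ normalises $W_a$ inside $W_e$.

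Next I would prove commutativity. Since both actions are generated by the respective pure pieces, it suffices to check commutativity on generators: (i) $\pi'$ on the left versus $\pi''$ on the right; (ii) $\pi'$ on the left versus $\si\in W_a$ on the right; (iii) $w\in W$ on the left versus $\pi''$ on the right; (iv) $w\in W$ on the left versus $\si\in W_a$ on the right. Cases (i) and (ii) are immediate from the formulas because the left $\Pi$-action only touches the first coordinate by left multiplication while the right actions touch it by right multiplication and the second coordinate by an operation not involving $\pi'$. Case (iv) is exactly the commutation of the left $W$-action and the right $W_a$-action on $\alc(\cF)$ already asserted in the excerpt (``The action of $W_a$ on the right and the action of $W$ on the left commute''), transported through the fixed first coordinate $\pi$. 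The one case requiring genuine care is (iii): comparing $w\cdot((\pi,A)\cdot\pi'')=w\cdot(\pi\pi'',A_0\si^{\pi''}_A)=(\pi\pi'',w^{\pi\pi''}A_0\si^{\pi''}_A)$ with $(w\cdot(\pi,A))\cdot\pi''=(\pi,w^\pi A)\cdot\pi''=(\pi\pi'',A_0\si^{\pi''}_{w^\pi A})$, so the content is the identity $w^{\pi\pi''}A_0\si^{\pi''}_A=A_0\si^{\pi''}_{w^\pi A}$ in $\alc(\cF)$, equivalently $\si^{\pi''}_{w^\pi A}=\si_{w^{\pi\pi''}A_0}\cdot\si^{\pi''}_A$ at the level of $W_a$.

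To handle that identity I would unwind via the dictionary between $W$ and $W_a$: writing $A=A_0\si_A$ and recalling $wA_0=A_0\si_w$, one has on one side $w^\pi A=w^\pi A_0\si_A$ and $\si_{w^\pi A}=\si_{w^\pi}\si_A$ (since $w^\pi A=(w^\pi A_0)\si_A=A_0\si_{w^\pi}\si_A$), while $\si_{w^{\pi\pi''}}=(\si_{w^\pi})^{\pi''}$ because $\pi\mapsto(\cdot)^\pi$ is compatible with the identification $W\simeq W_a$ and with composition in $\Pi$. Then the claimed identity becomes $(\si_{w^\pi}\si_A)^{\pi''}=(\si_{w^\pi})^{\pi''}\si_A^{\pi''}$, which is just the multiplicativity of the diagram automorphism $\si\mapsto\si^{\pi''}$ on $W_a$ — true by construction since $\pi''$ acts on $W_a$ as a group automorphism. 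I expect this bookkeeping — keeping straight which coordinate is acted on from which side and that all the twists $(\cdot)^\pi$ are simultaneously compatible with $W\simeq W_a\simeq$ the abstract Coxeter group, with composition in $\Pi$, and with the alcove addresses $\si_w,\si_A$ — to be the only real obstacle; once the notational framework is fixed, each verification is a one-line computation. Finally, faithfulness: if $\pi'\cdot(\pi,A)\cdot\,?$ — rather, if an element $g=\pi' w'\in W_e$ (resp. $\pi'\si'$) acts trivially on all $(\pi,A)$, apply it to $(\mathrm{id},A_0)$: the first coordinate forces $\pi'=\mathrm{id}$, and then acting on the second coordinate for all $A$ forces $w'=\mathrm{id}$ by faithfulness of the left $W$-action on $\alc(\cF)$ (resp. $\si'=\mathrm{id}$ by simple transitivity of the right $W_a$-action), using that $W_e=\Pi\ltimes W\simeq\Pi\ltimes W_a$ so every element has such a unique factorisation.
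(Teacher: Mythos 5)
Your proof is correct and follows essentially the same route as the paper: faithfulness is reduced to the faithfulness of the left $W$- and right $W_a$-actions on $\alc(\cF)$, and commutativity comes down to the identity $w^\pi A_0=A_0\si_w^\pi$ together with the multiplicativity of the diagram automorphism $\si\mapsto\si^{\pi}$. Your generator-by-generator case analysis is just an unpacking of the paper's single computation comparing $\pi_1w\cdot[(\pi,A)\cdot\pi_2\si]$ with $[\pi_1w\cdot(\pi,A)]\cdot\pi_2\si$, and your case (iii) is exactly the step where the paper invokes that identity.
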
 
\begin{proof}
The fact that these two actions are faithful is straightforward since the right action of $W_a$ (respectively the left action of $W$) on $\alc(\cF)$ is faithful. Let us show that they commute. First let $w=s_1\ldots s_n\in W$ and $\si=\si_1\ldots \si_n$ so that $wA_0=A_0\si$. We have 
$$w^\pi A_0=s_{\pi(1)}\ldots s_{\pi(n)}A_0=A_0\si_{\pi(1)}\ldots \si_{\pi(n)}=A_0\si^\pi$$
Let $\pi,\pi_1,\pi_2\in \Pi$, $w\in W$, $\si\in W_a$ and $A=A_0\si_A\in \alc(\cF)$. We have 
\begin{align*}
\pi_1 w\cdot \big[(\pi,A)\cdot \pi_2 \si\big]&=\pi_1 w\big[(\pi\pi_2,A_0\si_A^{\pi_2}\si)\big]\\
&=(\pi_1\pi\pi_2,w^{\pi\pi_2}A_0\si_A^{\pi_2}\si)
\end{align*}
and
\begin{align*}
\big[\pi_1 w\cdot(\pi,A)\big]\cdot \pi_2 \si&=(\pi_1\pi,w^{\pi}A)\cdot \pi_2 \si\\
&=(\pi_1\pi,w^\pi A_0\si_A)\cdot \pi_2 \si\\
&=(\pi_1\pi,A_0\si^\pi_{w}\si_A)\cdot \pi_2 \si\\
&=(\pi_1\pi\pi_2,A_0\si^{\pi\pi_2}_{w}\si^{\pi_2}_A\si)\\
&=(\pi_1\pi\pi_2,w^{\pi\pi_2}A_0\si^{\pi_2}_A\si)\\
\end{align*}
hence the result. 
\end{proof}
In order to simplify the notation, we will write $\pi A$ instead of $(\pi,A)$. One needs to be careful though, we do have $\pi A_0=(\pi,A_0)=(1,A_0)\pi=A_0\pi$ but this only holds for $A_0$. 
\begin{Rem}
Each alcove in $\alc(\cF)$ can be identified with its set of vertices  $\{f^A_0,\ldots,f^A_n\}$ where $f^A_{i}\in V$ for all $0\leq i\leq n$. Then the action of $W_a$ on $\alc(\cF)$ is given by 
$$A\cdot \si=\{f^A_0,\ldots,f^A_n\}\cdot \si=\{f^A_0\si,\ldots,f^A_n\si\}.$$
The action on the left is simply deduced form the right action by setting $s_i\cdot A_0=A_0\si_i$.\\
\noindent
In the extended setting, we identify  $A_0$ with the ordered sequence $(f_0^{A_0},\ldots,f^{A_0}_n)\in~V^{n+1}$. Now $W_e$ naturally acts diagonally on $V^{n+1}$. 
Note that since $\pi\in\Pi$ stabilises $A_0$, it induces a permutation of the vertices of $A_0$. By labelling the vertices carefully we have $(f_0^{A_0},\ldots,f^{A_0}_n)\cdot \pi=(f_{\pi(0)}^{A_0},\ldots,f^{A_0}_{\pi(n)})$. The set of extended alcoves can be identified with the orbit of $(f_0^{A_0},\ldots,f^{A_0}_n)$ under the action of $W_e$ by setting 
$$(\pi,A)=(\pi,A_0\si_A)\longleftrightarrow A_0\cdot \pi\si_A$$
The left action is then given by  $\pi w\cdot (\pi'A)=\pi wA_0\pi'\si_A=A_0\pi\si_w\pi'\si_A.$\\
\end{Rem}

\begin{Exa}
Let us consider the example of type $\tA_2$ as in Example \ref{typeA}. In Figure 2. the arrows represent the fundamental weights $\om_1$ and $\om_2$ and the gray alcove represents the fundamental alcove $A_0$. Consider the element $\pi=t_{\om_1}\si_3\si_2$. Then one can check that $\pi$ stabilises the alcove $A_0$ but only globally, not pointwise. Indeed the faces of $ A_0$ are permuted circularly as shown on figure 2.(B). 
\begin{center}

\begin{figure}[h]
        \centering
        \begin{subfigure}[b]{0.3\textwidth}
                \centering
\begin{pspicture}(-2,-2)(2,2)
\pspolygon[fillstyle=solid,fillcolor=lightgray](0,0)(.5,.866)(-.5,.866)
\psset{linewidth=.1mm}

\psset{linewidth=.3mm}
\psline[linewidth=.6mm]{->}(0,0)(-.5,.866)
\psline[linewidth=.6mm]{->}(0,0)(.5,.866)
\rput(-.4,.3){{\tiny $\om_1$}}
\rput(.4,.3){{\tiny $\om_2$}}

\psset{linewidth=.2mm}
\psline(-1,1.732)(1,1.732)
\psline(-1.5,.866)(1.5,.866)
\psline(-2,0)(2,0)
\psline(-1.5,-.866)(1.5,-.866)
\psline(-1,-1.732)(1,-1.732)

\psline(1,1.732)(2,0)
\psline(2,0)(1,-1.732)
\psline(-1,1.732)(-2,0)
\psline(-2,0)(-1,-1.732)

\psline(-1.5,.866)(0,-1.732)
\psline(-1,1.732)(1,-1.732)
\psline(0,1.732)(1.5,-.866)

\psline(1.5,.866)(0,-1.732)
\psline(1,1.732)(-1,-1.732)
\psline(0,1.732)(-1.5,-.866)

\end{pspicture}
                \caption{Alcove $A_0$ and $\cC^-$}
                \label{rootA}
        \end{subfigure}
        \qquad\qquad
                        \begin{subfigure}[b]{0.3\textwidth}
                \centering
                \begin{pspicture}(-2,-2)(2,2)
\psset{unit=1.2cm}

\rput(-1,0.5){{\tiny $A_0$}}

\psline[linestyle=dashed](-.5,.866)(-1,0)
\psline(-1,0)(-1.5,.866)
\psline[linestyle=dotted](-.5,.866)(-1.5,.866)

\rput(0,0.4){$\underset{\pi=p_{w_1}\si_3\si_2}{\longrightarrow}$}

\rput(1,0.5){{\tiny $ A_0\pi$}}

\psline(1.5,.866)(1,0)
\psline[linestyle=dotted](1,0)(.5,.866)
\psline[linestyle=dashed](1.5,.866)(.5,.866)

\end{pspicture}
                \caption{Orbits of faces of $A_0$}
                \label{orb}
        \end{subfigure}
              \caption{Action of $\Pi$ in type $\tA_2$}
\end{figure}
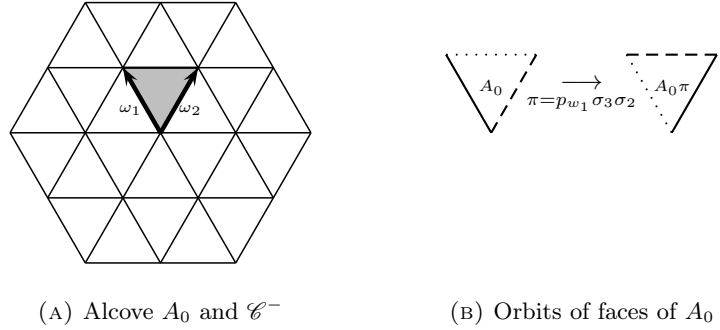
\end{center}
\end{Exa}

 \medskip

The weight function $L$ can be extended to $W_e$ by setting $L(\pi w)=L(w)$ for all $\pi\in\Pi$ and $w\in W$. The Bruhat order can also be extended by setting $\pi w\leq \pi'w$ if and only if $\pi=\pi'$ and $w\leq w'$ where $\pi,\pi'\in\Pi$ and $w,w'\in W$.  \\

As before, for all $\al\in P$, we will denote by $p_\al$ the unique element of $W_e$ satisfying $p_\al A_0=A_0t_\al$ (where the equality holds  in $\alc_e(\cF)$).

\subsection{The lowest two-sided cell of an affine Weyl groups}
\label{lowest-geom}
We will write $w_0$ for the longest element of Weyl group $(W_0,S_0)$. We define the following subset of $W_e$:
$$c_{0}=\{w\in W_{e}\mid w=x \add w_{0}\add y,\  x,y\in W_e\}.$$
This set is called the lowest two-sided cell of $W_e$. The reason for this terminology is that $c_{0}$ is a Kazhdan-Lusztig cell (we refer to Section \ref{KL-theory} for the definition of cells) and it is the lowest one for a certain partial order. This cell has been extensively studied, see for instance \cite{bremke, jeju2, Lus3,Shil1,Shil2,Xi}. 

\medskip

In this section, we give a nice description of the elements lying in $c_0$ as in \cite{Blas,Shil2}. Recall the definition of $b_\al$ in section \ref{Lweight}. Let $\bB_0$ be the subset of $\alc_e(\cF)$ which consist of all the alcoves $\pi A$ with $A\in\alc(\cF)$ satisfying the following property for all $\al\in \De$:
$$\forall \la\in A, 0<\sg \la,\al\sd<b_{\al}.$$ 
The set $\bB_0$ is very closely related to the set $c_0$ as shown in the following theorem. 
\begin{Th}
\label{lowest-exp}
Let $w\in c_{0}$. There exist unique $\tau\in P^{+}$ and $z,z'\in \bB_{0}$ such that $w=z\add p_{\la}\add w_{0}\add z'^{-1}$.
\end{Th}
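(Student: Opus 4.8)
The plan is to reduce the decomposition of $w\in c_0$ to a statement about alcoves and their relative positions with respect to the hyperplane arrangement $\cF$, using the geometric realisation of $W_e$ on $\alc_e(\cF)$. First I would recall that by definition $w = x\add w_0\add y$ for some $x,y\in W_e$, so $wA_0$ is obtained from $A_0$ by first applying (on the right) the element corresponding to $y$, then $w_0$, then $x$; since $w_0$ is the longest element of $W_0$, the alcove $A_0\si_{w_0}$ lies deep inside $-\cC^+$, and the key geometric content is that any alcove of $c_0$, read as an element of $\alc_e(\cF)$, ``contains'' a translated copy $\pi'(-\ov{A_0})$ of the antidominant cone of directions bounded by the walls $H_{\al,0},H_{\al,b_\al}$ ($\al\in\De$). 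Concretely, I would show: for $w\in c_0$ there is a unique $L$-weight $\la$ (forced to be dominant, i.e. $\la\in P^+$) and a unique way of writing the ``journey'' from $A_0$ to $wA_0$ as: travel from $A_0$ into a quarter-region near $\la$ along a path staying inside $\bB_0 t_\mu$-type blocks (this produces $z'^{-1}$ on the right), then a long translation/Weyl-group move realising $p_\la\add w_0$ around the $L$-weight $\la$, then travel out again (producing $z$ on the left). The set $\bB_0$ is precisely designed so that its elements are the alcoves sitting in the ``fundamental box'' $0<\sg\la,\al\sd<b_\al$, which is one copy of the fundamental alcove for the (possibly rescaled in type $C$) affine Weyl group attached to the $L$-weight lattice $P$.

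Next I would set up the existence part by an explicit construction. Given $w = x\add w_0\add y\in c_0$, I would first replace $x,y$ by ``extremal'' representatives: among all pairs $(x,y)$ with $w=x\add w_0\add y$, choose one making $x$ of maximal length on the left (equivalently, push as much as possible of the translation part into the middle). This is where the $L$-weight $\la$ is read off: the alcove $x^{-1}wA_0 = (w_0\add y)A_0$ and its left-translates under $W_0$ surround a unique special point for the $L$-hyperplane arrangement, and that point, moved back to the dominant chamber, is $\la\in P^+$. Having fixed $\la$, the element $p_\la\add w_0$ is the longest element in its $W_\la$-coset sitting around $\la$, and one checks $x\add w_0\add y = z\add(p_\la\add w_0)\add z'^{-1}$ by verifying the length-additivity conditions hyperplane-by-hyperplane: the hyperplanes separating $A_0$ from $wA_0$ partition into those crossed by $z'^{-1}$, those crossed within $p_\la\add w_0$ (these are exactly the $\nu_L$ hyperplanes through $\la$, together with the translation hyperplanes), and those crossed by $z$, with no hyperplane counted twice. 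I would use Remark 2.2(1) (the description of separating hyperplanes of a reduced expression) to make this bookkeeping precise, and the characterisation $L_\la = L(w_\la)$, $\nu_L=\max_\la L_\la$ from Section \ref{Lweight} to identify $\la$ as an $L$-weight.

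For uniqueness, suppose $w = z_1\add p_{\la_1}\add w_0\add z_1'^{-1} = z_2\add p_{\la_2}\add w_0\add z_2'^{-1}$ with $\la_i\in P^+$, $z_i,z_i'\in\bB_0$. The translation part of $w\in W_e$ (its image in $P$ under $W_e=W_0\ltimes P$) together with the fact that $z_i,z_i'\in\bB_0$ are ``small'' (bounded by the box $0<\sg\la,\al\sd<b_\al$) pins down $\la_i$: taking the image in $P/($root lattice-type sublattice$)$ and using that $\bB_0$ contributes nothing to the ``large-scale'' translation, one gets $\la_1=\la_2=:\la$. Then $z_1\add p_\la\add w_0\add z_1'^{-1} = z_2\add p_\la\add w_0\add z_2'^{-1}$, and since $p_\la\add w_0$ is a fixed element with known left and right descent behaviour, left- and right-cancelling forces $z_1=z_2$ and $z_1'=z_2'$ — here one uses that an element of $\bB_0$ is determined by the alcove $zA_0$ and that the product $z\add(p_\la\add w_0)$ being reduced means $z$ is recovered as the part of the reduced word lying outside the fundamental box around $\la$.

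The main obstacle I anticipate is the precise matching in the existence step: showing that the ``generic'' element $x\add w_0\add y$ of $c_0$ can be massaged into the normal form $z\add p_\la\add w_0\add z'^{-1}$ with \emph{all four} length-additivity conditions holding simultaneously, and in particular that the correct $\la$ is an $L$-weight rather than merely some dominant element of $P$. This requires the geometric input that alcoves in $c_0$ are exactly those whose associated element, in the alcove picture, has the longest element $w_\la$ of some $W_\la$ ($\la$ an $L$-weight) ``visible'' inside it — essentially the Bremke/Xi/Shi description of $c_0$ in the unequal-parameter setting, which is what forces the appearance of $b_\al$ and the box $0<\sg\la,\al\sd<b_\al$ defining $\bB_0$. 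Once that geometric description is in hand, the algebraic verification via Remark 2.2(1) is routine bookkeeping.
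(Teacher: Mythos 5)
There is no detailed proof in the paper to compare against: the author explicitly defers Theorem \ref{lowest-exp} to \cite[Proposition 4.3]{Shil2} (non-extended, $L=\ell$) and \cite[Proposition 3.1]{Blas} (extended case), so the only fair comparison is with the geometric arguments of those references, which your sketch does indeed follow in spirit (alcove picture, special points, bookkeeping of separating hyperplanes).

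As a proof, however, your proposal has a genuine gap, and you name it yourself: the entire content of the theorem is the "geometric input" you defer to the end, namely that every $w\in c_0$ admits a representative in which the four-fold length additivity $w=z\add p_\tau\add w_0\add z'^{-1}$ holds simultaneously, with the relevant vertex an $L$-weight (so that $b_\al$ and the box $0<\sg\la,\al\sd<b_\al$ defining $\bB_0$ appear) --- this is precisely the Bremke/Shi description being cited, not something that follows from "routine bookkeeping" via Remark 2.2(1). Your uniqueness argument also has a soft spot: you propose to recover $\tau$ from the translation part of $w$ on the grounds that $\bB_0$ "contributes nothing to the large-scale translation," but elements of $\bB_0$ are elements of $W_e$ with nontrivial (albeit bounded) translation parts, and for $\tau$ close to the walls of $\cC^+$ a bounded perturbation can cross a chamber wall, so pinning down $\tau$ requires an actual argument (e.g.\ that the components $N_{z'}$ are disjoint and that within each the map $(z,\tau)\mapsto zp_\tau w_0 z'^{-1}$ is injective, which is again part of what is being proved). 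In short, the route is the right one, but the proposal assumes the hard part rather than proving it.
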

The proof of this theorem can be found in \cite[Proposition 4.3]{Shil2}. The result is slightly different as the author consider the non-extended case when $L=\ell$ but all the main idea easily generalize to our case. Another proof of this result in the extended case can be found in \cite[Proposition 3.1]{Blas}.\\

Theorem \ref{lowest-exp} should be understood as follows.  The set $c_0$ can be decomposed into connected components:
$$c_0=\bigsqcup_{z'\in\bB_0} N_{z'} \text{ where $N_{z'}:=\{w\in W_e\mid w=x\add w_0\add {z'}^{-1},\ x\in W_e\}$}.$$
In turn each connected component can be decomposed in the following way:
$$N_{z'}=\{z p_\tau w_0 {z'}^{-1}\mid \tau\in P^+, z\in \bB_0 \}.$$
This shows that the set $N_{z'}$ can be covered by translates of $\bB_0$; see Example \ref{C2} below. 
\begin{Rem}
In the above expression of $N_{z'}$, we do not need to specify  that $w=z\add p_\la \add w_0\add  {z'}^{-1}$ as it is a (non-trivial!) consequence of the definition of $\bB_0$ and $w_0$. 
\end{Rem}

\begin{Exa}
\label{C2}
Let $W$ be of type $\tilde{C}_2$ 
and let $a,b,c\in\nN^\ast$ be such that $L(s_0)=a,L(s_1)=b$ and $L(s_2)=c$. By convention, we assume that $a\geq c$. In the following figures,
we describe the set $\bB_0$, the set $c_0$ as well as each of its connected components. We need to distinguish the case $a>c$ in which $\Pi=\{1\}$ and $a=c$ in which $\Pi\simeq \nZ/2\nZ$.\\

 In Figure 3.(a) and 3.(c), the black alcove is the fundamental alcove, the dark grey alcoves correspond to the alcoves  $z^{-1}A_0$ for $z\in\bB_0$ and the light gray areas to the set~$c_0$. In Figure 3.(b) and 3.(d), we represent the connected component $N_{z'}$ for $z'\in \bB_0$. The light grey alcoves correspond $zw_0{z'}^{-1}A_0$ for all $z\in \bB_0$ and the darker ones to $zp_{\om_1}w_0{z'}^{-1}A_0$ and $zp_{\om_2}w_0{z'}^{-1}A_0$ where $\om_1$ and $\om_2$ are the fundamental $L$-weights.

\psset{linewidth=.13mm}
\psset{unit=.5cm}

\begin{figure}[h!]
\caption{Description of $c_{0}$}
\label{c0}
\begin{textblock}{10.5}(3,9.7)
\begin{center}
\begin{pspicture}(-6,-6)(6,6)


\psline[linewidth=.5mm](0,2)(0,6)
\psline[linewidth=.5mm](0,2)(-4,6)

\psline[linewidth=.5mm](1,3)(1,6)
\psline[linewidth=.5mm](1,3)(4,6)

\psline[linewidth=.5mm](1,1)(6,6)
\psline[linewidth=.5mm](1,1)(6,1)

\psline[linewidth=.5mm](2,0)(6,0)
\psline[linewidth=.5mm](2,0)(6,-4)

\psline[linewidth=.5mm](1,-1)(1,-6)
\psline[linewidth=.5mm](1,-1)(6,-6)

\psline[linewidth=.5mm](0,0)(0,-6)
\psline[linewidth=.5mm](0,0)(-6,-6)

\psline[linewidth=.5mm](-2,0)(-6,0)
\psline[linewidth=.5mm](-2,0)(-6,-4)

\psline[linewidth=.5mm](-1,1)(-6,1)
\psline[linewidth=.5mm](-1,1)(-6,6)


\pspolygon[fillcolor=black,fillstyle=solid](0,0)(0,1)(.5,.5)

\pspolygon[fillcolor=lightgray!50!,fillstyle=solid](0,0)(0,-6)(-6,-6)
\pspolygon[fillcolor=lightgray!50!,fillstyle=solid](1,-1)(1,-6)(6,-6)
\pspolygon[fillcolor=lightgray!50!,fillstyle=solid](2,0)(6,0)(6,-4)
\pspolygon[fillcolor=lightgray!50!,fillstyle=solid](-2,0)(-6,0)(-6,-4)
\pspolygon[fillcolor=lightgray!50!,fillstyle=solid](1,1)(6,1)(6,6)
\pspolygon[fillcolor=lightgray!50!,fillstyle=solid](-1,1)(-6,1)(-6,6)
\pspolygon[fillcolor=lightgray!50!,fillstyle=solid](1,3)(1,6)(4,6)
\pspolygon[fillcolor=lightgray!50!,fillstyle=solid](0,2)(0,6)(-4,6)

\pspolygon[fillcolor=gray,fillstyle=solid](.5,.5)(0,1)(1,1)
\pspolygon[fillcolor=gray,fillstyle=solid](0,1)(0,2)(.5,1.5)
\pspolygon[fillcolor=gray,fillstyle=solid](0,1)(-1,1)(-.5,.5)
\pspolygon[fillcolor=gray,fillstyle=solid](-1,0)(-2,0)(-1.5,.5)
\pspolygon[fillcolor=gray,fillstyle=solid](1,0)(2,0)(1.5,.5)
\pspolygon[fillcolor=gray,fillstyle=solid](1,0)(1,-1)(.5,-.5)
\pspolygon[fillcolor=gray,fillstyle=solid](1,2)(1,3)(.5,2.5)

\psline(-6,-6)(-6,6)
\psline(-5,-6)(-5,6)
\psline(-4,-6)(-4,6)
\psline(-3,-6)(-3,6)
\psline(-2,-6)(-2,6)
\psline(-1,-6)(-1,6)
\psline(0,-6)(0,6)
\psline(1,-6)(1,6)
\psline(2,-6)(2,6)
\psline(3,-6)(3,6)
\psline(4,-6)(4,6)
\psline(5,-6)(5,6)
\psline(6,-6)(6,6)

\psline(-6,-6)(6,-6)
\psline(-6,-5)(6,-5)
\psline(-6,-4)(6,-4)
\psline(-6,-3)(6,-3)
\psline(-6,-2)(6,-2)
\psline(-6,-1)(6,-1)
\psline(-6,0)(6,0)
\psline(-6,1)(6,1)
\psline(-6,2)(6,2)
\psline(-6,3)(6,3)
\psline(-6,4)(6,4)
\psline(-6,5)(6,5)
\psline(-6,6)(6,6)

\psline(0,0)(1,1)
\psline(0,1)(1,0)

\psline(1,0)(2,1)
\psline(1,1)(2,0)

\psline(2,0)(3,1)
\psline(2,1)(3,0)

\psline(3,0)(4,1)
\psline(3,1)(4,0)

\psline(4,0)(5,1)
\psline(4,1)(5,0)

\psline(5,0)(6,1)
\psline(5,1)(6,0)

\psline(-1,0)(0,1)
\psline(-1,1)(0,0)

\psline(-2,0)(-1,1)
\psline(-2,1)(-1,0)

\psline(-3,0)(-2,1)
\psline(-3,1)(-2,0)

\psline(-4,0)(-3,1)
\psline(-4,1)(-3,0)

\psline(-5,0)(-4,1)
\psline(-5,1)(-4,0)

\psline(-6,0)(-5,1)
\psline(-6,1)(-5,0)

\psline(0,1)(1,2)
\psline(0,2)(1,1)

\psline(1,1)(2,2)
\psline(1,2)(2,1)

\psline(2,1)(3,2)
\psline(2,2)(3,1)

\psline(3,1)(4,2)
\psline(3,2)(4,1)

\psline(4,1)(5,2)
\psline(4,2)(5,1)

\psline(5,1)(6,2)
\psline(5,2)(6,1)

\psline(-1,1)(0,2)
\psline(-1,2)(0,1)

\psline(-2,1)(-1,2)
\psline(-2,2)(-1,1)

\psline(-3,1)(-2,2)
\psline(-3,2)(-2,1)

\psline(-4,1)(-3,2)
\psline(-4,2)(-3,1)

\psline(-5,1)(-4,2)
\psline(-5,2)(-4,1)

\psline(-6,1)(-5,2)
\psline(-6,2)(-5,1)

\psline(0,2)(1,3) 
\psline(0,3)(1,2) 

\psline(1,2)(2,3) 
\psline(1,3)(2,2) 

\psline(2,2)(3,3) 
\psline(2,3)(3,2) 
 
\psline(3,2)(4,3) 
\psline(3,3)(4,2) 

\psline(4,2)(5,3) 
\psline(4,3)(5,2) 

\psline(5,2)(6,3) 
\psline(5,3)(6,2) 

\psline(-1,2)(0,3) 
\psline(-1,3)(0,2) 

\psline(-2,2)(-1,3) 
\psline(-2,3)(-1,2) 

\psline(-3,2)(-2,3) 
\psline(-3,3)(-2,2) 

\psline(-4,2)(-3,3) 
\psline(-4,3)(-3,2) 

\psline(-5,2)(-4,3) 
\psline(-5,3)(-4,2) 

\psline(-6,2)(-5,3) 
\psline(-6,3)(-5,2)

\psline(0,3)( 1,4) 
\psline(0,4)( 1,3) 

\psline(1,3)( 2,4) 
\psline(1,4)( 2,3) 

\psline(2,3)( 3,4) 
\psline(2,4)( 3,3) 
 
\psline(3,3)( 4,4) 
\psline(3,4)( 4,3) 

\psline(4,3)( 5,4) 
\psline(4,4)( 5,3) 

\psline(5,3)( 6,4) 
\psline(5,4)( 6,3) 

\psline(-1,3)( 0,4) 
\psline(-1,4)( 0,3) 

\psline(-2,3)( -1,4) 
\psline(-2,4)( -1,3) 

\psline(-3,3)( -2,4) 
\psline(-3,4)( -2,3) 

\psline(-4,3)( -3,4) 
\psline(-4,4)( -3,3) 

\psline(-5,3)( -4,4) 
\psline(-5,4)( -4,3) 

\psline(-6,3)( -5,4) 
\psline(-6,4)( -5,3)

\psline(0,4)(1,5)
\psline(0,5)(1,4)

\psline(1,4)(2,5)
\psline(1,5)(2,4)

\psline(2,4)(3,5)
\psline(2,5)(3,4)

\psline(3,4)(4,5)
\psline(3,5)(4,4)

\psline(4,4)(5,5)
\psline(4,5)(5,4)

\psline(5,4)(6,5)
\psline(5,5)(6,4)

\psline(-1,4)(0,5)
\psline(-1,5)(0,4)

\psline(-2,4)(-1,5)
\psline(-2,5)(-1,4)

\psline(-3,4)(-2,5)
\psline(-3,5)(-2,4)

\psline(-4,4)(-3,5)
\psline(-4,5)(-3,4)

\psline(-5,4)(-4,5)
\psline(-5,5)(-4,4)

\psline(-6,4)(-5,5)
\psline(-6,5)(-5,4)

\psline(0,5)(1,6)
\psline(0,6)(1,5)

\psline(1,5)(2,6)
\psline(1,6)(2,5)

\psline(2,5)(3,6)
\psline(2,6)(3,5)

\psline(3,5)(4,6)
\psline(3,6)(4,5)

\psline(4,5)(5,6)
\psline(4,6)(5,5)

\psline(5,5)(6,6)
\psline(5,6)(6,5)

\psline(-1,5)(0,6)
\psline(-1,6)(0,5)

\psline(-2,5)(-1,6)
\psline(-2,6)(-1,5)

\psline(-3,5)(-2,6)
\psline(-3,6)(-2,5)

\psline(-4,5)(-3,6)
\psline(-4,6)(-3,5)

\psline(-5,5)(-4,6)
\psline(-5,6)(-4,5)

\psline(-6,5)(-5,6)
\psline(-6,6)(-5,5)

\psline(0,0)(1,-1)
\psline(0,-1)(1,0)

\psline(1,0)(2,-1)
\psline(1,-1)(2,0)

\psline(2,0)(3,-1)
\psline(2,-1)(3,0)

\psline(3,0)(4,-1)
\psline(3,-1)(4,0)

\psline(4,0)(5,-1)
\psline(4,-1)(5,0)

\psline(5,0)(6,-1)
\psline(5,-1)(6,0)

\psline(-1,0)(0,-1)
\psline(-1,-1)(0,0)

\psline(-2,0)(-1,-1)
\psline(-2,-1)(-1,0)

\psline(-3,0)(-2,-1)
\psline(-3,-1)(-2,0)

\psline(-4,0)(-3,-1)
\psline(-4,-1)(-3,0)

\psline(-5,0)(-4,-1)
\psline(-5,-1)(-4,0)

\psline(-6,0)(-5,-1)
\psline(-6,-1)(-5,0)

\psline(0,-1)(1,-2)
\psline(0,-2)(1,-1)

\psline(1,-1)(2,-2)
\psline(1,-2)(2,-1)

\psline(2,-1)(3,-2)
\psline(2,-2)(3,-1)

\psline(3,-1)(4,-2)
\psline(3,-2)(4,-1)

\psline(4,-1)(5,-2)
\psline(4,-2)(5,-1)

\psline(5,-1)(6,-2)
\psline(5,-2)(6,-1)

\psline(-1,-1)(0,-2)
\psline(-1,-2)(0,-1)

\psline(-2,-1)(-1,-2)
\psline(-2,-2)(-1,-1)

\psline(-3,-1)(-2,-2)
\psline(-3,-2)(-2,-1)

\psline(-4,-1)(-3,-2)
\psline(-4,-2)(-3,-1)

\psline(-5,-1)(-4,-2)
\psline(-5,-2)(-4,-1)

\psline(-6,-1)(-5,-2)
\psline(-6,-2)(-5,-1)

\psline(0,-2)(1,-3) 
\psline(0,-3)(1,-2) 

\psline(1,-2)(2,-3) 
\psline(1,-3)(2,-2) 

\psline(2,-2)(3,-3) 
\psline(2,-3)(3,-2) 
 
\psline(3,-2)(4,-3) 
\psline(3,-3)(4,-2) 

\psline(4,-2)(5,-3) 
\psline(4,-3)(5,-2) 

\psline(5,-2)(6,-3) 
\psline(5,-3)(6,-2) 

\psline(-1,-2)(0,-3) 
\psline(-1,-3)(0,-2) 

\psline(-2,-2)(-1,-3) 
\psline(-2,-3)(-1,-2) 

\psline(-3,-2)(-2,-3) 
\psline(-3,-3)(-2,-2) 

\psline(-4,-2)(-3,-3) 
\psline(-4,-3)(-3,-2) 

\psline(-5,-2)(-4,-3) 
\psline(-5,-3)(-4,-2) 

\psline(-6,-2)(-5,-3) 
\psline(-6,-3)(-5,-2)

\psline(0,-3)( 1,-4) 
\psline(0,-4)( 1,-3) 

\psline(1,-3)( 2,-4) 
\psline(1,-4)( 2,-3) 

\psline(2,-3)( 3,-4) 
\psline(2,-4)( 3,-3) 
 
\psline(3,-3)( 4,-4) 
\psline(3,-4)( 4,-3) 

\psline(4,-3)( 5,-4) 
\psline(4,-4)( 5,-3) 

\psline(5,-3)( 6,-4) 
\psline(5,-4)( 6,-3) 

\psline(-1,-3)( 0,-4) 
\psline(-1,-4)( 0,-3) 

\psline(-2,-3)( -1,-4) 
\psline(-2,-4)( -1,-3) 

\psline(-3,-3)( -2,-4) 
\psline(-3,-4)( -2,-3) 

\psline(-4,-3)( -3,-4) 
\psline(-4,-4)( -3,-3) 

\psline(-5,-3)( -4,-4) 
\psline(-5,-4)( -4,-3) 

\psline(-6,-3)( -5,-4) 
\psline(-6,-4)( -5,-3)

\psline(0,-4)(1,-5)
\psline(0,-5)(1,-4)

\psline(1,-4)(2,-5)
\psline(1,-5)(2,-4)

\psline(2,-4)(3,-5)
\psline(2,-5)(3,-4)

\psline(3,-4)(4,-5)
\psline(3,-5)(4,-4)

\psline(4,-4)(5,-5)
\psline(4,-5)(5,-4)

\psline(5,-4)(6,-5)
\psline(5,-5)(6,-4)

\psline(-1,-4)(0,-5)
\psline(-1,-5)(0,-4)

\psline(-2,-4)(-1,-5)
\psline(-2,-5)(-1,-4)

\psline(-3,-4)(-2,-5)
\psline(-3,-5)(-2,-4)

\psline(-4,-4)(-3,-5)
\psline(-4,-5)(-3,-4)

\psline(-5,-4)(-4,-5)
\psline(-5,-5)(-4,-4)

\psline(-6,-4)(-5,-5)
\psline(-6,-5)(-5,-4)

\psline(0,-5)(1,-6)
\psline(0,-6)(1,-5)

\psline(1,-5)(2,-6)
\psline(1,-6)(2,-5)

\psline(2,-5)(3,-6)
\psline(2,-6)(3,-5)

\psline(3,-5)(4,-6)
\psline(3,-6)(4,-5)

\psline(4,-5)(5,-6)
\psline(4,-6)(5,-5)

\psline(5,-5)(6,-6)
\psline(5,-6)(6,-5)

\psline(-1,-5)(0,-6)
\psline(-1,-6)(0,-5)

\psline(-2,-5)(-1,-6)
\psline(-2,-6)(-1,-5)

\psline(-3,-5)(-2,-6)
\psline(-3,-6)(-2,-5)

\psline(-4,-5)(-3,-6)
\psline(-4,-6)(-3,-5)

\psline(-5,-5)(-4,-6)
\psline(-5,-6)(-4,-5)

\psline(-6,-5)(-5,-6)
\psline(-6,-6)(-5,-5)

\rput(0,-6.7){(a) The set $c_0$ when $a>c$}
\end{pspicture}
\end{center}
\end{textblock}

\begin{textblock}{10.5}(10,10.7)
\begin{center}
\psset{unit=.5cm}
\begin{pspicture}(0,0)(9,9)
\psset{linewidth=.1mm}

\pspolygon[fillcolor=lightgray,fillstyle=solid](0,0)(0,2)(1,3)(1,1)
\pspolygon[fillcolor=gray,fillstyle=solid](0,2)(0,4)(1,5)(1,3)
\pspolygon[fillcolor=gray,fillstyle=solid](1,1)(1,3)(2,4)(2,2)

\psline[linewidth=.7mm]{->}(0,0)(0,2)
\psline[linewidth=.7mm]{->}(0,0)(1,1)
\rput(-.5,1){$\omega_{1}$}
\rput(1.2,.4){$\omega_{2}$}

\psline(0,0)(9,9)
\psline(0,0)(0,9)

\psline(0,1)(1,1)
\psline(0,2)(2,2)
\psline(0,3)(3,3)
\psline(0,4)(4,4)
\psline(0,5)(5,5)
\psline(0,6)(6,6)
\psline(0,7)(7,7)
\psline(0,8)(8,8)
\psline(0,9)(9,9)

\psline(1,1)(1,9)
\psline(2,2)(2,9)
\psline(3,3)(3,9)
\psline(4,4)(4,9)
\psline(5,5)(5,9)
\psline(6,6)(6,9)
\psline(7,7)(7,9)
\psline(8,8)(8,9)

\psline(0,1)(.5,.5)
\psline(0,2)(1,1)
\psline(0,3)(1.5,1.5)
\psline(0,4)(2,2)
\psline(0,5)(2.5,2.5)
\psline(0,6)(3,3)
\psline(0,7)(3.5,3.5)
\psline(0,8)(4,4)
\psline(0,9)(4.5,4.5)

\psline(1,9)(5,5)
\psline(2,9)(5.5,5.5)
\psline(3,9)(6,6)
\psline(4,9)(6.5,6.5)
\psline(5,9)(7,7)
\psline(6,9)(7.5,7.5)
\psline(7,9)(8,8)
\psline(8,9)(8.5,8.5)

\psline(0,1)(8,9)
\psline(0,2)(7,9)
\psline(0,3)(6,9)
\psline(0,4)(5,9)
\psline(0,5)(4,9)
\psline(0,6)(3,9)
\psline(0,7)(2,9)
\psline(0,8)(1,9)

\rput(4,-1.7){(b)\ Connected component $N_{z'}$ of $c_0$}

%


%

%

%


%

%

%

%




\end{pspicture}\end{center}
\end{textblock}

\begin{textblock}{10.5}(3,17.7)
\begin{center}
\begin{pspicture}(-6,-6)(6,6)


\psline[linewidth=.5mm](0,1)(0,6)
\psline[linewidth=.5mm](0,1)(-5,6)

\psline[linewidth=.5mm](1,2)(1,6)
\psline[linewidth=.5mm](1,2)(5,6)

\psline[linewidth=.5mm](1,1)(6,6)
\psline[linewidth=.5mm](1,1)(6,1)

\psline[linewidth=.5mm](1,0)(6,0)
\psline[linewidth=.5mm](1,0)(6,-5)

\psline[linewidth=.5mm](1,-1)(1,-6)
\psline[linewidth=.5mm](1,-1)(6,-6)

\psline[linewidth=.5mm](0,0)(0,-6)
\psline[linewidth=.5mm](0,0)(-6,-6)

\psline[linewidth=.5mm](-1,0)(-6,0)
\psline[linewidth=.5mm](-1,0)(-6,-5)

\psline[linewidth=.5mm](-1,1)(-6,1)
\psline[linewidth=.5mm](-1,1)(-6,6)

\pspolygon[fillcolor=black,fillstyle=solid](0,0)(0,1)(.5,.5)
\pspolygon[fillcolor=gray,fillstyle=solid](1,2)(.5,1.5)(1,1)
\pspolygon[fillcolor=gray,fillstyle=solid](.5,.5)(0,1)(1,1)
\pspolygon[fillcolor=gray,fillstyle=solid](0,1)(-1,1)(-.5,.5)
\pspolygon[fillcolor=gray,fillstyle=solid](0,0)(1,0)(.5,.5)
\pspolygon[fillcolor=gray,fillstyle=solid](0,0)(-1,0)(-.5,.5)
\pspolygon[fillcolor=gray,fillstyle=solid](1,0)(1,-1)(.5,-.5)

\pspolygon[fillcolor=lightgray!50!,fillstyle=solid](0,0)(0,-6)(-6,-6)
\pspolygon[fillcolor=lightgray!50!,fillstyle=solid](1,-1)(1,-6)(6,-6)
\pspolygon[fillcolor=lightgray!50!,fillstyle=solid](1,0)(6,0)(6,-5)
\pspolygon[fillcolor=lightgray!50!,fillstyle=solid](-1,0)(-6,0)(-6,-5)
\pspolygon[fillcolor=lightgray!50!,fillstyle=solid](1,1)(6,1)(6,6)
\pspolygon[fillcolor=lightgray!50!,fillstyle=solid](-1,1)(-6,1)(-6,6)
\pspolygon[fillcolor=lightgray!50!,fillstyle=solid](1,2)(1,6)(5,6)
\pspolygon[fillcolor=lightgray!50!,fillstyle=solid](0,1)(0,6)(-5,6)

\psline(-6,-6)(-6,6)
\psline(-5,-6)(-5,6)
\psline(-4,-6)(-4,6)
\psline(-3,-6)(-3,6)
\psline(-2,-6)(-2,6)
\psline(-1,-6)(-1,6)
\psline(0,-6)(0,6)
\psline(1,-6)(1,6)
\psline(2,-6)(2,6)
\psline(3,-6)(3,6)
\psline(4,-6)(4,6)
\psline(5,-6)(5,6)
\psline(6,-6)(6,6)

\psline(-6,-6)(6,-6)
\psline(-6,-5)(6,-5)
\psline(-6,-4)(6,-4)
\psline(-6,-3)(6,-3)
\psline(-6,-2)(6,-2)
\psline(-6,-1)(6,-1)
\psline(-6,0)(6,0)
\psline(-6,1)(6,1)
\psline(-6,2)(6,2)
\psline(-6,3)(6,3)
\psline(-6,4)(6,4)
\psline(-6,5)(6,5)
\psline(-6,6)(6,6)

\psline(0,0)(1,1)
\psline(0,1)(1,0)

\psline(1,0)(2,1)
\psline(1,1)(2,0)

\psline(2,0)(3,1)
\psline(2,1)(3,0)

\psline(3,0)(4,1)
\psline(3,1)(4,0)

\psline(4,0)(5,1)
\psline(4,1)(5,0)

\psline(5,0)(6,1)
\psline(5,1)(6,0)

\psline(-1,0)(0,1)
\psline(-1,1)(0,0)

\psline(-2,0)(-1,1)
\psline(-2,1)(-1,0)

\psline(-3,0)(-2,1)
\psline(-3,1)(-2,0)

\psline(-4,0)(-3,1)
\psline(-4,1)(-3,0)

\psline(-5,0)(-4,1)
\psline(-5,1)(-4,0)

\psline(-6,0)(-5,1)
\psline(-6,1)(-5,0)

\psline(0,1)(1,2)
\psline(0,2)(1,1)

\psline(1,1)(2,2)
\psline(1,2)(2,1)

\psline(2,1)(3,2)
\psline(2,2)(3,1)

\psline(3,1)(4,2)
\psline(3,2)(4,1)

\psline(4,1)(5,2)
\psline(4,2)(5,1)

\psline(5,1)(6,2)
\psline(5,2)(6,1)

\psline(-1,1)(0,2)
\psline(-1,2)(0,1)

\psline(-2,1)(-1,2)
\psline(-2,2)(-1,1)

\psline(-3,1)(-2,2)
\psline(-3,2)(-2,1)

\psline(-4,1)(-3,2)
\psline(-4,2)(-3,1)

\psline(-5,1)(-4,2)
\psline(-5,2)(-4,1)

\psline(-6,1)(-5,2)
\psline(-6,2)(-5,1)

\psline(0,2)(1,3) 
\psline(0,3)(1,2) 

\psline(1,2)(2,3) 
\psline(1,3)(2,2) 

\psline(2,2)(3,3) 
\psline(2,3)(3,2) 
 
\psline(3,2)(4,3) 
\psline(3,3)(4,2) 

\psline(4,2)(5,3) 
\psline(4,3)(5,2) 

\psline(5,2)(6,3) 
\psline(5,3)(6,2) 

\psline(-1,2)(0,3) 
\psline(-1,3)(0,2) 

\psline(-2,2)(-1,3) 
\psline(-2,3)(-1,2) 

\psline(-3,2)(-2,3) 
\psline(-3,3)(-2,2) 

\psline(-4,2)(-3,3) 
\psline(-4,3)(-3,2) 

\psline(-5,2)(-4,3) 
\psline(-5,3)(-4,2) 

\psline(-6,2)(-5,3) 
\psline(-6,3)(-5,2)

\psline(0,3)( 1,4) 
\psline(0,4)( 1,3) 

\psline(1,3)( 2,4) 
\psline(1,4)( 2,3) 

\psline(2,3)( 3,4) 
\psline(2,4)( 3,3) 
 
\psline(3,3)( 4,4) 
\psline(3,4)( 4,3) 

\psline(4,3)( 5,4) 
\psline(4,4)( 5,3) 

\psline(5,3)( 6,4) 
\psline(5,4)( 6,3) 

\psline(-1,3)( 0,4) 
\psline(-1,4)( 0,3) 

\psline(-2,3)( -1,4) 
\psline(-2,4)( -1,3) 

\psline(-3,3)( -2,4) 
\psline(-3,4)( -2,3) 

\psline(-4,3)( -3,4) 
\psline(-4,4)( -3,3) 

\psline(-5,3)( -4,4) 
\psline(-5,4)( -4,3) 

\psline(-6,3)( -5,4) 
\psline(-6,4)( -5,3)

\psline(0,4)(1,5)
\psline(0,5)(1,4)

\psline(1,4)(2,5)
\psline(1,5)(2,4)

\psline(2,4)(3,5)
\psline(2,5)(3,4)

\psline(3,4)(4,5)
\psline(3,5)(4,4)

\psline(4,4)(5,5)
\psline(4,5)(5,4)

\psline(5,4)(6,5)
\psline(5,5)(6,4)

\psline(-1,4)(0,5)
\psline(-1,5)(0,4)

\psline(-2,4)(-1,5)
\psline(-2,5)(-1,4)

\psline(-3,4)(-2,5)
\psline(-3,5)(-2,4)

\psline(-4,4)(-3,5)
\psline(-4,5)(-3,4)

\psline(-5,4)(-4,5)
\psline(-5,5)(-4,4)

\psline(-6,4)(-5,5)
\psline(-6,5)(-5,4)

\psline(0,5)(1,6)
\psline(0,6)(1,5)

\psline(1,5)(2,6)
\psline(1,6)(2,5)

\psline(2,5)(3,6)
\psline(2,6)(3,5)

\psline(3,5)(4,6)
\psline(3,6)(4,5)

\psline(4,5)(5,6)
\psline(4,6)(5,5)

\psline(5,5)(6,6)
\psline(5,6)(6,5)

\psline(-1,5)(0,6)
\psline(-1,6)(0,5)

\psline(-2,5)(-1,6)
\psline(-2,6)(-1,5)

\psline(-3,5)(-2,6)
\psline(-3,6)(-2,5)

\psline(-4,5)(-3,6)
\psline(-4,6)(-3,5)

\psline(-5,5)(-4,6)
\psline(-5,6)(-4,5)

\psline(-6,5)(-5,6)
\psline(-6,6)(-5,5)

\psline(0,0)(1,-1)
\psline(0,-1)(1,0)

\psline(1,0)(2,-1)
\psline(1,-1)(2,0)

\psline(2,0)(3,-1)
\psline(2,-1)(3,0)

\psline(3,0)(4,-1)
\psline(3,-1)(4,0)

\psline(4,0)(5,-1)
\psline(4,-1)(5,0)

\psline(5,0)(6,-1)
\psline(5,-1)(6,0)

\psline(-1,0)(0,-1)
\psline(-1,-1)(0,0)

\psline(-2,0)(-1,-1)
\psline(-2,-1)(-1,0)

\psline(-3,0)(-2,-1)
\psline(-3,-1)(-2,0)

\psline(-4,0)(-3,-1)
\psline(-4,-1)(-3,0)

\psline(-5,0)(-4,-1)
\psline(-5,-1)(-4,0)

\psline(-6,0)(-5,-1)
\psline(-6,-1)(-5,0)

\psline(0,-1)(1,-2)
\psline(0,-2)(1,-1)

\psline(1,-1)(2,-2)
\psline(1,-2)(2,-1)

\psline(2,-1)(3,-2)
\psline(2,-2)(3,-1)

\psline(3,-1)(4,-2)
\psline(3,-2)(4,-1)

\psline(4,-1)(5,-2)
\psline(4,-2)(5,-1)

\psline(5,-1)(6,-2)
\psline(5,-2)(6,-1)

\psline(-1,-1)(0,-2)
\psline(-1,-2)(0,-1)

\psline(-2,-1)(-1,-2)
\psline(-2,-2)(-1,-1)

\psline(-3,-1)(-2,-2)
\psline(-3,-2)(-2,-1)

\psline(-4,-1)(-3,-2)
\psline(-4,-2)(-3,-1)

\psline(-5,-1)(-4,-2)
\psline(-5,-2)(-4,-1)

\psline(-6,-1)(-5,-2)
\psline(-6,-2)(-5,-1)

\psline(0,-2)(1,-3) 
\psline(0,-3)(1,-2) 

\psline(1,-2)(2,-3) 
\psline(1,-3)(2,-2) 

\psline(2,-2)(3,-3) 
\psline(2,-3)(3,-2) 
 
\psline(3,-2)(4,-3) 
\psline(3,-3)(4,-2) 

\psline(4,-2)(5,-3) 
\psline(4,-3)(5,-2) 

\psline(5,-2)(6,-3) 
\psline(5,-3)(6,-2) 

\psline(-1,-2)(0,-3) 
\psline(-1,-3)(0,-2) 

\psline(-2,-2)(-1,-3) 
\psline(-2,-3)(-1,-2) 

\psline(-3,-2)(-2,-3) 
\psline(-3,-3)(-2,-2) 

\psline(-4,-2)(-3,-3) 
\psline(-4,-3)(-3,-2) 

\psline(-5,-2)(-4,-3) 
\psline(-5,-3)(-4,-2) 

\psline(-6,-2)(-5,-3) 
\psline(-6,-3)(-5,-2)

\psline(0,-3)( 1,-4) 
\psline(0,-4)( 1,-3) 

\psline(1,-3)( 2,-4) 
\psline(1,-4)( 2,-3) 

\psline(2,-3)( 3,-4) 
\psline(2,-4)( 3,-3) 
 
\psline(3,-3)( 4,-4) 
\psline(3,-4)( 4,-3) 

\psline(4,-3)( 5,-4) 
\psline(4,-4)( 5,-3) 

\psline(5,-3)( 6,-4) 
\psline(5,-4)( 6,-3) 

\psline(-1,-3)( 0,-4) 
\psline(-1,-4)( 0,-3) 

\psline(-2,-3)( -1,-4) 
\psline(-2,-4)( -1,-3) 

\psline(-3,-3)( -2,-4) 
\psline(-3,-4)( -2,-3) 

\psline(-4,-3)( -3,-4) 
\psline(-4,-4)( -3,-3) 

\psline(-5,-3)( -4,-4) 
\psline(-5,-4)( -4,-3) 

\psline(-6,-3)( -5,-4) 
\psline(-6,-4)( -5,-3)

\psline(0,-4)(1,-5)
\psline(0,-5)(1,-4)

\psline(1,-4)(2,-5)
\psline(1,-5)(2,-4)

\psline(2,-4)(3,-5)
\psline(2,-5)(3,-4)

\psline(3,-4)(4,-5)
\psline(3,-5)(4,-4)

\psline(4,-4)(5,-5)
\psline(4,-5)(5,-4)

\psline(5,-4)(6,-5)
\psline(5,-5)(6,-4)

\psline(-1,-4)(0,-5)
\psline(-1,-5)(0,-4)

\psline(-2,-4)(-1,-5)
\psline(-2,-5)(-1,-4)

\psline(-3,-4)(-2,-5)
\psline(-3,-5)(-2,-4)

\psline(-4,-4)(-3,-5)
\psline(-4,-5)(-3,-4)

\psline(-5,-4)(-4,-5)
\psline(-5,-5)(-4,-4)

\psline(-6,-4)(-5,-5)
\psline(-6,-5)(-5,-4)

\psline(0,-5)(1,-6)
\psline(0,-6)(1,-5)

\psline(1,-5)(2,-6)
\psline(1,-6)(2,-5)

\psline(2,-5)(3,-6)
\psline(2,-6)(3,-5)

\psline(3,-5)(4,-6)
\psline(3,-6)(4,-5)

\psline(4,-5)(5,-6)
\psline(4,-6)(5,-5)

\psline(5,-5)(6,-6)
\psline(5,-6)(6,-5)

\psline(-1,-5)(0,-6)
\psline(-1,-6)(0,-5)

\psline(-2,-5)(-1,-6)
\psline(-2,-6)(-1,-5)

\psline(-3,-5)(-2,-6)
\psline(-3,-6)(-2,-5)

\psline(-4,-5)(-3,-6)
\psline(-4,-6)(-3,-5)

\psline(-5,-5)(-4,-6)
\psline(-5,-6)(-4,-5)

\psline(-6,-5)(-5,-6)
\psline(-6,-6)(-5,-5)

\rput(0,-6.7){(c)\ The set $c_0$ when $a=c$}
\end{pspicture}
\end{center}
\end{textblock}

\begin{textblock}{10.5}(10,18.7)
\begin{center}
\psset{unit=.5cm}
\begin{pspicture}(0,0)(9,9)
\psset{linewidth=.1mm}

\pspolygon[fillcolor=lightgray,fillstyle=solid](0,0)(0,1)(1,2)(1,1)
\pspolygon[fillcolor=gray,fillstyle=solid](0,1)(0,2)(1,3)(1,2)
\pspolygon[fillcolor=gray,fillstyle=solid](1,1)(1,2)(2,3)(2,2)

\psline[linewidth=.7mm]{->}(0,0)(0,1)
\psline[linewidth=.7mm]{->}(0,0)(1,1)
\rput(-.5,1){$\omega_{1}$}
\rput(1.2,.4){$\omega_{2}$}

\psline(0,0)(9,9)
\psline(0,0)(0,9)

\psline(0,1)(1,1)
\psline(0,2)(2,2)
\psline(0,3)(3,3)
\psline(0,4)(4,4)
\psline(0,5)(5,5)
\psline(0,6)(6,6)
\psline(0,7)(7,7)
\psline(0,8)(8,8)
\psline(0,9)(9,9)

\psline(1,1)(1,9)
\psline(2,2)(2,9)
\psline(3,3)(3,9)
\psline(4,4)(4,9)
\psline(5,5)(5,9)
\psline(6,6)(6,9)
\psline(7,7)(7,9)
\psline(8,8)(8,9)

\psline(0,1)(.5,.5)
\psline(0,2)(1,1)
\psline(0,3)(1.5,1.5)
\psline(0,4)(2,2)
\psline(0,5)(2.5,2.5)
\psline(0,6)(3,3)
\psline(0,7)(3.5,3.5)
\psline(0,8)(4,4)
\psline(0,9)(4.5,4.5)

\psline(1,9)(5,5)
\psline(2,9)(5.5,5.5)
\psline(3,9)(6,6)
\psline(4,9)(6.5,6.5)
\psline(5,9)(7,7)
\psline(6,9)(7.5,7.5)
\psline(7,9)(8,8)
\psline(8,9)(8.5,8.5)

\psline(0,1)(8,9)
\psline(0,2)(7,9)
\psline(0,3)(6,9)
\psline(0,4)(5,9)
\psline(0,5)(4,9)
\psline(0,6)(3,9)
\psline(0,7)(2,9)
\psline(0,8)(1,9)

\rput(4,-1.7){(d)\ Connected component $N_{z'}$ of $c_0$}

%


%

%

%


%

%

%

%




\end{pspicture}\end{center}
\end{textblock}

\end{figure}

\end{Exa}

\newpage


\section{Kazhdan-Lusztig theory}
\label{KL-theory}
In this section $W_e=\Pi\ltimes W$ denotes an extended affine Weyl group with generating set $S$.  The extended Bruhat order and the extended  weight function will be denoted by $\leq$ and $L$. 
\subsection{Hecke algebras and Kazhdan-Lusztig basis}
Let $\cA=\nC[q,q^{-1}]$ where $q$ is an indeterminate. Let $\cH$ be the Iwahori-Hecke algebra associated to $W$, with $\cA$-basis  $\{T_{w}|w\in W_e\}$ and multiplication rule given by
\begin{equation*}
T_{s}T_{w}=
\begin{cases}
T_{sw}, & \mbox{if } \ell(sw)>\ell(w),\\
T_{sw}+(q^{L(s)}-q^{-L(s)})T_{w}, &\mbox{if } \ell(sw)<\ell(w),
\end{cases}
\end{equation*}
for all $s\in S$ and $w\in W_e$. 
\begin{Rem}
If we denote by $\cH'$ the subalgebra of $\cH$ generated by $T_s$ with $s\in S$ then $\cH$ is isomorphic to the twisted tensor product $\nZ[\Pi]\otimes \cH'$ by setting $T_{\om w}\mapsto \om\otimes T_w$. Here $\nZ[\Pi]$ denotes the group algebra of $\Pi$ over $\nZ$ and $\om\in \Pi$. 
\end{Rem}
We will denote by  $\bar\ $ the ring involution of $\cA$ which takes $q$ to $q^{-1}$. This involution can be extended to a ring involution of $\cH$ via the formula
$$\ov{\sum_{w\in W_e} a_{w}T_{w}}=\sum_{w\in W_e} \bar{a}_{w}T_{w^{-1}}^{-1}\quad (a_{w}\in\cA).$$
We set 
$$\begin{array}{cclccccc}
\cA_{<0}&=&q^{-1}\nZ[q^{-1}] &\text{, }& \cH_{<0}=\bigoplus_{w\in W_e} \cA_{<0}T_{w}\\
\cA_{\leq 0}&=&\nZ[q^{-1}] &\text{ and }& \cH_{\leq 0}=\bigoplus_{w\in W_e} \cA_{\leq 0}T_{w}.
\end{array}$$
For each $w\in W_e$ there exists a unique element $C_{w}\in\cH$ (see \cite[Theorem 5.2]{bible}) such that (1) $\overline{C}_{w}=C_{w}$ and (2) $C_{w}\equiv T_{w} \mod \cH_{<0}$.
For any $w\in W_e$ we set 
$$C_{w}=T_{w}+\sum_{y\in W_e } P_{y,w} T_{y}\quad \text{where $P_{y,w}\in \cA_{< 0}$}.$$
The coefficients $P_{y,w}$ are called as the Kazhdan-Lusztig polynomials.  It is well known (\cite[\S 5.3]{bible}) that $P_{y,w}=0$ whenever $y\nleq w$. It follows that $\{C_{w}|w\in W_e\}$ forms an $\cA$-basis of $\cH$ known as the ``Kazhdan-Lusztig basis''. 
\begin{Lem}
\label{mod-H0} Let $h\in \cH$ be such that $\bar{h}=h$ and $h\equiv \sum a_zT_z \mod \cH_{<0}$ where $a_z\in \nZ$. Then $h=\sum a_z C_z$.
\end{Lem}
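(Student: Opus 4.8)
The statement to prove is Lemma~\ref{mod-H0}: if $h\in\cH$ satisfies $\bar h=h$ and $h\equiv\sum_z a_z T_z\bmod\cH_{<0}$ with $a_z\in\nZ$, then $h=\sum_z a_z C_z$. The natural approach is to show that the element $h':=h-\sum_z a_z C_z$ is both bar-invariant and lies in $\cH_{<0}$, and then invoke the standard uniqueness argument that forces such an element to vanish.

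\emph{Step 1: $h'$ is bar-invariant.} Since $\bar h=h$ by hypothesis and each $\overline{C_z}=C_z$ by the defining property of the Kazhdan--Lusztig basis, and since the coefficients $a_z\in\nZ$ are fixed by the bar involution, we immediately get $\overline{h'}=\bar h-\sum_z \bar a_z\,\overline{C_z}=h-\sum_z a_z C_z=h'$.

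\emph{Step 2: $h'\in\cH_{<0}$.} Using $C_z\equiv T_z\bmod\cH_{<0}$ (property (2) defining $C_z$), we have $\sum_z a_z C_z\equiv\sum_z a_z T_z\bmod\cH_{<0}$. Combining with the hypothesis $h\equiv\sum_z a_z T_z\bmod\cH_{<0}$ gives $h'=h-\sum_z a_z C_z\in\cH_{<0}$; that is, writing $h'=\sum_{w\in W_e}c_w T_w$, every coefficient $c_w$ lies in $\cA_{<0}=q^{-1}\nZ[q^{-1}]$.

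\emph{Step 3: conclude $h'=0$.} Suppose $h'\neq 0$. Write $\overline{h'}=\sum_w \bar c_w\,\overline{T_w}$ using the formula $\overline{T_w}=T_{w^{-1}}^{-1}$, and recall the standard fact that $T_{w^{-1}}^{-1}=T_w+(\text{lower Bruhat terms with coefficients in }\cA)$, more precisely $T_{w^{-1}}^{-1}=\sum_{y\le w}(-1)^{?}\overline{P'}\cdots T_y$ with the leading coefficient (the $T_w$-coefficient) equal to $1$. Pick $w$ maximal in the Bruhat order among those with $c_w\neq 0$; then the coefficient of $T_w$ in $\overline{h'}$ equals $\bar c_w$ (the maximal $w$ receives no contribution from $T_y$-terms with $y<w$ since those arise only from $\overline{T_{w'}}$ with $w'\ge w$, and maximality forces $w'=w$). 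From $\overline{h'}=h'$ we get $\bar c_w=c_w$. But $c_w\in q^{-1}\nZ[q^{-1}]$ forces $\bar c_w\in q\nZ[q]$, and the only element in $q^{-1}\nZ[q^{-1}]\cap q\nZ[q]$ is $0$, contradicting $c_w\neq 0$. Hence $h'=0$, i.e.\ $h=\sum_z a_z C_z$.

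\emph{Main obstacle.} The only delicate point is Step~3: one must control the leading (top-Bruhat) coefficient of $\overline{T_w}=T_{w^{-1}}^{-1}$ and verify that no cancellation or shifting of Bruhat degrees spoils the argument. This is the content of the well-known expansion of $T_{w^{-1}}^{-1}$ in the $T$-basis (see \cite[\S5.3]{bible} or \cite{bible}), and once that expansion is invoked the argument closes immediately; indeed this is precisely the uniqueness mechanism underlying the existence statement for $C_w$ cited just before the lemma. Everything else is a direct unwinding of the definitions of $\cH_{<0}$, $\cA_{<0}$, and the bar involution.
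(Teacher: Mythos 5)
Your proof is correct and follows essentially the same route as the paper: both set $h'=h-\sum a_z C_z$, observe that $h'$ is bar-invariant and lies in $\cH_{<0}$, and conclude $h'=0$. The only difference is that the paper cites \cite[\S 5.2.(e)]{bible} for the vanishing of a bar-invariant element of $\cH_{<0}$, whereas you reprove that fact directly via the triangularity of $\overline{T_w}=T_{w^{-1}}^{-1}$ in the standard basis, which is the standard argument and is carried out correctly.
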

\begin{proof}
We know \cite[\S 5.2.(e)]{bible} that if $h'\in\cH_{<0}$ satisfies $\bar{h'}=h$ then $h'=0$. The lemma is an easy consequence of this result setting $h'=h-\sum a_z C_z$.   
\end{proof}
Following Lusztig \cite[\S 3.4]{bible}, let us now introduce another involution of $\cH$ which plays a crucial role in the sequel.  
\begin{Def}\label{flat}
There exists a unique involutive antiautomorphism, i.e. an $\cA$-involution, $\flat:\cH\longrightarrow\cH$ which carries $T_{w}$ to $T_{w^{-1}}$.  
\end{Def}

Using this map, we obtain right handed version of the multiplication of $\cH$:
\begin{equation*}
T_{w}T_{s}=
\begin{cases}
T_{ws}, & \mbox{if } \ell(ws)>\ell(w),\\
T_{ws}+(q^{L(s)}-q^{-L(s)})T_{w}, &\mbox{if } \ell(ws)<\ell(w).
\end{cases}
\end{equation*}
Further, since $\flat$ sends $\cH_{<0}$ to itself it can be shown that \cite[\S 5.6]{bible} that $C_{w}^{\flat}=C_{w^{-1}}$, from where it follows that  
$$P_{y,w}=P_{y^{-1},w^{-1}}.$$

\subsection{Kazhdan-Lusztig cells}
For any $x,y\in W_e$, we set
$$C_xC_y=\sum_{z\in W_e} h_{x,y,z}C_z.$$
 Note that $\bar{h}_{x,y,z}=h_{x,y,z}$ and using the $\flat$-involution we have that  $h_{x,y,z}=h_{y^{-1},x^{-1},z^{-1}}$. 
 
 \smallskip
 
 We write $z\leftarrow_{\cL} y$ if one of the following condition holds:
 \begin{enumerate}
 \item $z=\om y$ with $\om\in\Pi$
 \item  $C_z$ appears with a non-zero coefficient in the expression of $C_{s}C_{y}$ in the Kazhdan-Lusztig basis: that is, if there exists $s\in S$ such that $h_{s,y,z}\neq 0$. 
 \end{enumerate}
We then denote by $\leq_{\cL}$ the transitive closure of this relation: it is known as the Kazhdan-Lusztig left pre-order $\leq_{\cL}$ on $W_e$. We will denote by $\sim_{\cL}$ equivalence relation associated to $\leq_{\cL}$ and the equivalence classes will be called left cells. 
 
 \smallskip
 
 Similarly, multiplying on the right in the defining relation,  we can define a pre-order $\leq_{\cR}$. Unsurprisingly, The associated equivalence relation will be denoted by $\sim_{\cR}$ and the corresponding equivalence classes are called the right cells of $W_e$. Using the antiautomorphism $\flat$, we have (see \cite[\S 8]{bible}) 
$$x\leq_{\cL} y \Longleftrightarrow x^{-1}\leq_{\cR} y^{-1}.$$
Finally we will write $x\leq_{\cLR} y$ if one can find a sequence $x=x_{0},x_{1},...,x_{n}=y$ of $W_e$ such that for each $0\leq i\leq n-1$ we have either $x_{i}\leftarrow_{\cL} x_{i+1}$ or $x_{i}\leftarrow_{\cR} x_{i+1}$. The equivalence relation associated to $\leq_{\cLR}$ will be denoted by $\sim_{\cLR}$ and the equivalence classes are called the two-sided cells of $W_e$. 

 \begin{Exa}
 \label{easy-rel}
 Let $s\in S$ and $w\in W_e$ be such that $sw>w$. Then it can be shown \cite[Theorem 6.6]{bible} that 
 $$C_sC_w=C_{sw}+\sum_{z<w}a_z C_z \text{where $a_z\in \cA$}.$$
 Therefore if $sw>w$ we always have $sw\leq_{\cL} w$. A straightforward generalisation of this result shows that 
 \begin{enumerate}
 \item for all $x,w\in W_e$ such that $xw=x\add w$ we have $xw\leq_{\cL} w$
 \item for all $y,w\in W_e$ such that $wy=w\add y$ we have $wy\leq_{\cR} w$
 \item for all $x,y,w\in W_e$ such that $xwy=x\add w\add y$ we have $xwy\leq_{\cLR} w$
 \end{enumerate}
 \end{Exa}


\subsection{Lowest two-sided cell modules}
\label{cell-mod}
Recall that $(W_0,S_0)$ denotes the Weyl group associated to $W$, that $w_0$ is the longest element of $W_0$ and the definition of~$c_0$:
$$c_0:=\{w\in W_e\mid w=x\add w_0\add y, x,y\in W_e\}.$$
It is a well known result that $c_0$ is a two-sided cell and that it is the lowest one with respect to the partial order on two-sided cells induced by $\leq_{\cLR}$. In other words, for all $y\in W_e$ and all $w\in c_0$ we have 
\begin{equation}
\label{lowest-mod}
y\leq_{\cLR} w\Longrightarrow y\in c_0.
\end{equation}

\begin{Rem}
If one knows that $c_0$ is a two-sided cell, it is fairly easy to see why it has to be the lowest one. Let $z\in W_e$. Let us show that there exists $w\in c_0$ such that $z\geq_{\cLR}w$. We construct a sequence $z=z_0,\ldots,z_n$ in the following way  
\begin{enumerate}
\item if $sz_i<z_i$ for all $s\in S_0$ then we stop and we set $z_i=w$;
\item if there exists $s\in S_0$ such that $sz_i>z_i$ then we set $z_{i+1}=sz_i$. 
\end{enumerate} 
The process finishes as $S_0$ generates a finite group. Further, when it finishes, say for $z_n$, then $sz_n<z_n$ for all $s\in S_0$, that is $z_n=w_0z$ and it is easy to see that by construction we have $z_n=w_0\add z$.  It follows that $z\geq_{\cL} z_n\in c_0$ by Example \ref{easy-rel} and, in particular we have $z\geq_{\cLR} z_n$.   Now this implies that $c_0$ is the lowest two-sided cell. Indeed let $z\in W_e$ be such that $z\leq_{\cLR} w\in c_0$. Then we can construct $z_n\in c_0$ such that $z_n\leq_{\cLR} z\leq_{\cLR} w$ which implies $z_n\sim_{\cLR} z\sim_{\cLR} w$ as both $z_n$ and $w$ lies in $c_0$. 
\end{Rem} 
Relation \ref{lowest-mod} shows that the $\cA$-module $\cM_0:=\sg C_w\mid w\in c_0\sd_\cA$ is a two-sided ideal. By definition of the pre-order $\leq_{\cLR}$, we have for all $h,h'\in \cH$ and $w\in c_0$:
$$h C_w h'=\sum_{y\leq_{\cLR}w} C_y$$
which lies in $\cM_{0}$. 

\medskip

Recall the definition of $\bB_0$ in Section \ref{lowest-geom}. 
For $y\in \bB_0^{-1}$ we set 
$$N_y=\{w\in W\mid w=xw_0y, x\in X_0\}\quand \cM_y=\sg C_w\mid w\in N_y\sd_\cA.$$ 
We will show in the next section that $\cM_y$ is a left ideal. 
\begin{Rem}
Note that the involution $\flat$ can be restricted to $\cM_0$ as it sends $C_w$ to $C_{w^{-1}}$ and $c_0$ is easily be seen to be stable under taking the inverse.   
\end{Rem}


\subsection{Relative Kazhdan-Lusztig polynomials}
\label{relative-KL}
We denote by $X_0$ the set of representative of minimal length of the left cosets of $W_0$ in $W_e$. We have
$$X_0=\{x\in W_e\mid xw_0=x\add w_0\}\quand X^{-1}_0=\{x\in W_e\mid w_0x=w_0\add x\}.$$
\begin{Th}
\label{Gind}
Let $y\in X_0^{-1}$. For all $x\in X_0$, there exists a unique family of polynomials $(\sp_{x',x})_{x'\in X_0}$ in $\cA_{<0}$ such that $\sp_{x',x}=0$ whenever $x'\not\leq x$ and  
\begin{equation}
\label{rel-pol}
\tC_{xw_0y}=T_xC_{w_0y}+\sum_{x'\in X_0} \sp_{x',x}T_{x'}C_{w_0y}
\end{equation}
is stable under the $\bar{\ }$ involution. 
\end{Th}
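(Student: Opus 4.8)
The plan is to construct the element $\tC_{xw_0y}$ essentially by the same fixed-point/bar-invariance argument that produces the ordinary Kazhdan--Lusztig basis, but carried out relative to the element $C_{w_0y}$. First I would record the key structural fact that makes this work: since $y\in X_0^{-1}$ we have $w_0y=w_0\add y$, and for $x\in X_0$ we have $xw_0=x\add w_0$, so that $xw_0y=x\add w_0\add y$ and in particular $\ell(xw_0y)=\ell(x)+\ell(w_0)+\ell(y)$. Consequently $T_xC_{w_0y}$, expanded in the $T$-basis, has "leading term" $T_{xw_0y}$ with coefficient $1$ and all other terms $T_z$ with $z<xw_0y$ in Bruhat order and with coefficients in $\cA$; moreover the set $\{z : T_z \text{ appears}\}$ is contained in the set of $z$ with $z\leq_{\cLR} w_0$, hence (by Example \ref{easy-rel} and Relation \ref{lowest-mod}) in $c_0$. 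I would also note that $T_xC_{w_0y}$ has a unique expansion $\sum_{x'\in X_0} r_{x',x}\,T_{x'}C_{w_0y}$ with $r_{x',x}\in\cA$, $r_{x,x}=1$, and $r_{x',x}=0$ unless $x'\leq x$ — this is the standard triangularity of the $T_x$ over the coset representatives, combined with the fact that the $\{T_{x'}C_{w_0y}\}_{x'\in X_0}$ are $\cA$-linearly independent (their leading terms $T_{x'w_0y}$ are distinct basis elements).

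Next I would set up the bar-involution on the relevant module. The space $\cM_{y} := \langle T_{x'}C_{w_0y}\mid x'\in X_0\rangle_{\cA}$ is $\bar{\ }$-stable: indeed $\overline{C_{w_0y}}=C_{w_0y}$, and $\overline{T_x}$ is an $\cA$-linear combination of $T_{x''}$ with $x''\leq x$, so $\overline{T_xC_{w_0y}}$ lies in $\cM_y$ and, re-expanded, equals $\sum_{x'\leq x}\overline{r_{x',x}}\,\overline{(\text{correction terms})}$; the upshot is a matrix identity $\overline{T_xC_{w_0y}} = \sum_{x'} d_{x',x}\,T_{x'}C_{w_0y}$ with $(d_{x',x})$ a lower-unitriangular matrix over $\cA$ satisfying $d\bar d = \mathrm{Id}$ (since $\bar{\ }$ is an involution). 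This is exactly the hypothesis of the abstract bar-involution lemma of Lusztig (the same one underlying \cite[Theorem 5.2]{bible}): given a free $\cA$-module with a basis $\{e_x\}_{x\in X_0}$, a poset structure making $\bar e_x = e_x + \sum_{x'<x} d_{x',x} e_{x'}$ with the involutivity constraint, there is a unique basis $\{\tilde e_x\}$ with $\overline{\tilde e_x}=\tilde e_x$ and $\tilde e_x \equiv e_x \bmod \sum_{x'<x}\cA_{<0}e_{x'}$. Applying this with $e_x = T_xC_{w_0y}$ produces the desired $\tC_{xw_0y} = T_xC_{w_0y} + \sum_{x'\in X_0}\sp_{x',x}T_{x'}C_{w_0y}$ with $\sp_{x',x}\in\cA_{<0}$ and $\sp_{x',x}=0$ unless $x'\leq x$, bar-invariant and unique. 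Uniqueness is automatic from the lemma; I would additionally remark that $\tC_{xw_0y}$ coincides with the genuine Kazhdan--Lusztig element $C_{xw_0y}$ — this is not literally asserted in the statement, but falls out by comparing bar-invariance and the mod-$\cH_{<0}$ normalization using Lemma \ref{mod-H0}, after checking that $T_{x'}C_{w_0y}\in T_{x'w_0y}+\cH_{<0}$.

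The main obstacle, and the step I would spend the most care on, is establishing the triangularity and $\cA_{<0}$-coefficient claims with the correct poset, i.e.\ verifying that the change-of-basis matrix $(r_{x',x})$ between $\{T_x C_{w_0y}\}$ and the putative $T$-expansion is genuinely lower-unitriangular for the Bruhat order restricted to $X_0$, and that the "defect" in bar-invariance lands in $\cA_{<0}$ strictly below the diagonal. The subtlety is that $C_{w_0y}$ is not a single $T_z$ but a sum, so one must argue that multiplying by $T_x$ and then re-collecting into $\{T_{x'}C_{w_0y}\}$ does not create terms indexed by $x'\not\leq x$; this requires knowing that the $X_0$-component of $T_xC_{w_0y}$ — isolated via the distinguished leading terms $T_{x'w_0y}$ — is controlled by $x$, which is where the identity $\ell(xw_0y)=\ell(x)+\ell(w_0)+\ell(y)$ and the standard multiplication rule $T_sT_x$ for $x\in X_0$ (which keeps one inside the span of $\{T_{x'}: x'\in X_0\}$ modulo lower-length corrections, essentially the "$J$-ring"/induction-from-$W_0$ phenomenon) do the real work. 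Once that triangular framework is in place the rest is the formal, and by now routine, application of the bar-involution existence-and-uniqueness lemma.
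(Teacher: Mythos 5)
Your proposal is correct, and it shares the same skeleton as the paper's argument: both realize $\cM=\sg T_{x'}C_{w_0y}\mid x'\in X_0\sd_\cA$ as a based, bar-stable $\cA$-module with a unitriangular bar involution and then invoke the standard Kazhdan--Lusztig existence-and-uniqueness mechanism. The difference is in how triangularity is obtained. The paper (following \cite{jeju2}) first shows $\cM$ is a left ideal by running Deodhar's lemma on $T_sT_xC_{w_0y}$ --- three cases: $sx\in X_0$ with $\ell(sx)>\ell(x)$, $sx\in X_0$ with $\ell(sx)<\ell(x)$, or $t:=x^{-1}sx\in S_0$, the last giving $T_sT_xC_{w_0y}=q^{L(t)}T_xC_{w_0y}$ because $tw_0y<w_0y$ --- and the polynomials $\sp_{x',x}$ are then built recursively from this $T_s$-action exactly as in the classical KL construction. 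You instead compute $\overline{T_xC_{w_0y}}=\overline{T_x}\,C_{w_0y}$ directly and re-collect the terms $T_uC_{w_0y}$, $u=u_1\add u_2$ with $u_1\in X_0$, $u_2\in W_0$, via $T_{u_2}C_{w_0y}=q^{L(u_2)}C_{w_0y}$, feeding the resulting unitriangular matrix into Lusztig's abstract lemma; this is equally valid and arguably more self-contained. The paper's route has one practical advantage: the structure constants of the $T_s$-action are visibly independent of $y$, which is precisely what is needed immediately after the theorem to conclude that the $\sp_{x',x}$ do not depend on $y$ (essential for defining the elements $\bP(z)$ in Section \ref{sec42}); on your route that independence needs a further remark. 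One inaccurate aside in your write-up: the $T$-basis support of $T_xC_{w_0y}$ is \emph{not} contained in $c_0$ --- the preorder $\leq_{\cLR}$ controls the KL-basis support of products $hC_wh'$, not the standard-basis support, and $C_{w_0y}$ itself involves $T_z$ for many $z\notin c_0$. You never use that remark, so it does not damage the proof, but it should be deleted.
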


\begin{proof}
The proof of this theorem is given in \cite{jeju2} (see Lemma 5.5, Lemma 5.6 and Proposition 5.7). It is based on the fact that the $\cA$-submodule $$\cM:=\sg T_xC_{w_0y}\mid x\in X_0\sd_\cA$$ is a left ideal of  $\cH$ for all $y\in X_0^{-1}$. Then the construction  of the polynomials $\sp_{x',x}$ only depends on the action of the $T_s$' on the elements $T_xC_{w_0y}$. We show here that $\cM$ is indeed a left ideal and we describe the action of $T_s$ on $T_xC_{w_0y}$ as we will need it later on. 

Fix $T_xC_{w_0y}\in \cM$ and $s\in S$. In order to show that $\cM$ is a left ideal, it is enough to show that $T_sT_xC_{w_0y}\in \cM$. 
 To simplify we will assume that $x\in W$, the case where $x=\pi x'$ with $(\pi,x)\in \Pi\times W$ is similar. According to Deodhar's lemma (see \cite[Lemma 2.1.2]{gp}), there are three cases to consider
\begin{enumerate}
\item $sx\in X_{0}$ and $\ell(sx)>\ell(x)$. Then $T_{s}T_{x}C_{w_0y}=T_{sx}C_{w_0y}$.
\item $sx\in X_{0}$ and $\ell(sx)<\ell(x)$. Then $T_{s}T_{x}C_{w_0y}=T_{sx}C_{w_0y}+(q^{L(s)}-q^{-L(s)})T_{x}C_{w_0y}$.
\item $t:=x^{-1}sx\in S_{0}$. Then $\ell(sx)=\ell(x)+1=\ell(xt)$. Now, since $tw_0y<w_0y$, we have  \cite[\S 5.5, Theorem 6.6.b]{bible}
$$T_{t}C_{v}=q^{L(t)}C_{v}.$$
Thus, we see that
$$T_{s}T_{x}C_{w_0y}=T_{sx}C_{w_0y}=T_{xt}C_{w_0y}=T_{x}T_{t}C_{w_0y}=q^{L(t)}T_{x}C_{w_0y}$$
\end{enumerate}
In all cases, we do have $T_sT_xC_{w_0y}\in \cM$ as required
\end{proof}

Looking at the proof, we see that the action of $T_s$ on $\cM$ do not depend on $y$. This means that the construction of the polynomials $\sp_{x',x}$ do not depend on $y$ either. As a consequence, there exists a unique family of polynomials $(\sp_{x',x})_{x'\in X_0}$ in $\cA_{<0}$ such that $\sp_{x',x}=0$ whenever $x'\not\leq x$ and such that:
\begin{equation}
\label{rel-pol}
\tC_{xw_0y}=T_xC_{w_0y}+\sum_{x'\in X_0} \sp_{x',x}T_{x'}C_{w_0y}
\end{equation}
is stable under the bar involution for all $y\in X_0$. 

\begin{Th}
Let $y\in \bB_0^{-1}$. We have $\tC_{xw_0y}=C_{xw_0y}$ for all $x\in X_0$. For all $y\in \bB_0^{-1}$ the $\cA$-submodule:
$$\cM_y=\sg C_{xw_0y}\mid x\in X_0\sd_\cA$$
is a left ideal. 
\end{Th}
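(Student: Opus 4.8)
The plan is to deduce both assertions from the fine structure of the lowest two-sided cell, treating the left ideal statement first and then identifying the relative basis.

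\emph{Step 1: $\cM_y$ is the left ideal $\cH C_{w_0y}$.} I would first record, exactly as in case~(3) of the proof of Theorem~\ref{Gind}, that $T_tC_{w_0y}=q^{L(t)}C_{w_0y}$ for $t\in S_0$, and iterate along a reduced word to get $T_uC_{w_0y}=q^{L(u)}C_{w_0y}$ for all $u\in W_0$. Writing a general $v\in W_e$ as $v=x\add u$ with $x\in X_0$ and $u\in W_0$, this gives $T_vC_{w_0y}=q^{L(u)}T_xC_{w_0y}$, whence
\[
\cM:=\langle T_xC_{w_0y}\mid x\in X_0\rangle_{\cA}=\langle T_vC_{w_0y}\mid v\in W_e\rangle_{\cA}=\cH C_{w_0y},
\]
the left ideal generated by $C_{w_0y}$; by a standard property of the Kazhdan--Lusztig basis this equals $\langle C_z\mid z\leq_{\cL}w_0y\rangle_{\cA}$. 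Since $w_0y=e\add w_0\add y\in c_0$, relation~\eqref{lowest-mod} forces every $z\leq_{\cL}w_0y$ into $c_0$; because $c_0$ is the lowest cell, the $\ba$-function properties available for it (\cite{bremke,jeju2}) then give $z\sim_{\cL}w_0y$, and by the description of the left cells of $c_0$ (\cite{Shil1,Shil2,Xi}, together with Theorem~\ref{lowest-exp}) the left cell of $w_0y$ for $y\in\bB_0^{-1}$ is exactly $N_y=\{xw_0y\mid x\in X_0\}$. Conversely each $xw_0y$ satisfies $xw_0y=x\add w_0\add y$ (this is the point at which $y\in\bB_0^{-1}$ is needed; cf.\ the Remark after Theorem~\ref{lowest-exp}), hence $xw_0y\leq_{\cL}w_0y$ by Example~\ref{easy-rel}. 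Therefore $\cM_y=\langle C_z\mid z\in N_y\rangle_{\cA}=\langle C_z\mid z\leq_{\cL}w_0y\rangle_{\cA}=\cM=\cH C_{w_0y}$ is a left ideal; in particular $\cM_y=\cM$.

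\emph{Step 2: reduction for $\tC_{xw_0y}=C_{xw_0y}$.} By Step~1 we have $C_{xw_0y}\in\cM$, so $C_{xw_0y}=\sum_{x'\in X_0}c_{x',x}T_{x'}C_{w_0y}$ for unique $c_{x',x}\in\cA$. Comparing Bruhat-maximal standard basis vectors -- namely $T_{xw_0y}$ with coefficient $1$ on the left (using $\ell(xw_0y)=\ell(x)+\ell(w_0)+\ell(y)$), and the fact that $T_{x'}C_{w_0y}$ has Bruhat-maximal term $T_{x'w_0y}$, with $x'w_0y\leq xw_0y$ iff $x'\leq x$ -- shows $c_{x,x}=1$ and $c_{x',x}=0$ unless $x'\leq x$. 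If in addition $c_{x',x}\in\cA_{<0}$ for $x'<x$, then $C_{xw_0y}$ is a bar-invariant element of $\cM$ of the shape $T_xC_{w_0y}+\sum_{x'<x}\cA_{<0}\,T_{x'}C_{w_0y}$, so by the uniqueness clause of Theorem~\ref{Gind} it equals $\tC_{xw_0y}$. Thus everything reduces to the negativity of the coefficients $c_{x',x}$.

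\emph{Step 3: negativity of the $c_{x',x}$, by induction on $\ell(x)$.} The case $x=e$ is vacuous. For $\ell(x)\geq1$ choose $s\in S$ with $sx<x$; then $x':=sx\in X_0$ (one checks $x't>x'$ for $t\in S_0$ from $xt>x$), $\ell(x')=\ell(x)-1$, $x=sx'$, and by the induction hypothesis together with Step~2, $C_{x''w_0y}=\tC_{x''w_0y}$ with $\cA_{<0}$-coefficients below the diagonal for every $x''\leq x'$. Since $s(x'w_0y)>x'w_0y$ we have $C_sC_{x'w_0y}=C_{xw_0y}+\sum_{z<x'w_0y}a_zC_z$ with $a_z\in\cA$, and by Step~1 every $z$ with $a_z\ne0$ lies in $N_y$. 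Substituting the inductive expressions for $C_{x'w_0y}$ and for the $C_{x''w_0y}$, and expanding $C_sT_{x'''}C_{w_0y}=(T_s+q^{-L(s)})T_{x'''}C_{w_0y}$ through Deodhar's three cases (as in the proof of Theorem~\ref{Gind}), the term $x'''=x'$ falls into Deodhar's case~(1) -- because $sx'=x$ with $\ell(x)>\ell(x')$ -- and produces $T_xC_{w_0y}+q^{-L(s)}T_{x'}C_{w_0y}$, giving the needed coefficient $1$ on $T_xC_{w_0y}$. Every remaining contribution to the coefficient of some $T_{x''''}C_{w_0y}$ with $x''''<x$ is a product of $\cA_{<0}$-coefficients, integers, the factor $q^{-L(s)}$, and at worst one of the positive powers $q^{L(s)}$, $q^{L(t)}$ coming from Deodhar's degenerate cases~(2)--(3); one must verify that these positive powers always cancel, so that each such coefficient lies in $\cA_{<0}$, closing the induction.

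I expect this last cancellation to be the main obstacle. It is the precise point where $y\in\bB_0^{-1}$ is essential -- for general $y\in X_0^{-1}$ one even has $xw_0y\notin c_0$ for some $x$, so $C_{xw_0y}\notin\cM$ and the statement simply fails -- and proving it amounts to checking that the relative Kazhdan--Lusztig recursion of \cite{jeju2} computing $\tC_{xw_0y}$ inside $\cM$ reproduces the honest Kazhdan--Lusztig recursion for $C_{xw_0y}$. This works because, $w_0y$ sitting at the bottom of $c_0$, the whole computation stays inside the single left cell $N_y$, where Deodhar's degenerate cases only appear paired with exactly compensating terms; it is the same phenomenon that controls the below-diagonal behaviour of the relative Kazhdan--Lusztig polynomials in \cite{jeju2}.
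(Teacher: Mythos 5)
There is a genuine gap, and it sits exactly at the heart of the theorem. Your Step 3 is not a proof: you reduce everything to the claim that the positive powers $q^{L(s)}$, $q^{L(t)}$ produced by Deodhar's cases (2) and (3) ``always cancel,'' and then explicitly leave this unverified. That cancellation \emph{is} the content of the statement $\tC_{xw_0y}=C_{xw_0y}$, so nothing has been proved. Moreover the inductive scheme via $C_sC_{x'w_0y}=C_{xw_0y}+\sum_{z<x'w_0y}a_zC_z$ requires controlling the coefficients $a_z$ (the analogues of the $\mu$/$M$-coefficients), about which nothing is known for general weight functions; I do not believe the suggested mechanism (``degenerate cases appear paired with compensating terms inside the left cell $N_y$'') is the right one. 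The paper's actual mechanism is global and geometric: the degree bound $\deg f_{x,vy,z}\leq c_{x,vy}\leq L(w_0)-L(v)$ of Theorem \ref{bound first} and Proposition \ref{mod-H0}(1), obtained by counting hyperplanes and using that the relevant alcoves stay in a quarter, gives $T_xC_{w_0y}\in\cH_{\leq 0}$ and, for $y\in\bB_0^{-1}$, the strict congruence $\tC_{xw_0y}\equiv T_{xw_0y}\bmod\cH_{<0}$; Lemma \ref{mod-H0} then forces $\tC_{xw_0y}=C_{xw_0y}$. The hypothesis $y\in\bB_0^{-1}$ enters through this alcove geometry, not through any pairing of terms.

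Step 1 also has problems. The asserted ``standard property'' $\cH C_{w_0y}=\langle C_z\mid z\leq_{\cL}w_0y\rangle_{\cA}$ is false in general (only the inclusion $\subseteq$ holds); the equality $\cM=\cM_y$ can be rescued by a unitriangularity argument once one knows $\{z\mid z\leq_{\cL}w_0y\}\subseteq N_y$, but that inclusion itself needs the full left-cell decomposition of $c_0$ into the sets $N_{y'}$ \emph{together with} the incomparability of these left cells under $\leq_{\cL}$. For unequal parameters these facts are established in \cite{bremke,jeju2} essentially by means of the relative elements $\tC_{xw_0y}$ and the same degree bounds, so your route risks circularity, besides importing far heavier input than needed. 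The paper obtains the left-ideal statement with no cell theory at all: Theorem \ref{Gind} shows by an elementary Deodhar-lemma computation that $\langle T_xC_{w_0y}\mid x\in X_0\rangle_{\cA}$ is a left ideal, and the unitriangular change of basis $T_xC_{w_0y}\leftrightarrow \tC_{xw_0y}=C_{xw_0y}$ identifies it with $\cM_y$.
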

\begin{proof}
According to the proof of \cite[Proposition 5.7]{jeju2} we have 
$$\tC_{xw_0y}\equiv T_{xw_0y}\mod \cH_{\leq 0}.$$
We get that $\tC_{xw_0y}=C_{xw_0y}$ using Lemma \ref{mod-H0}. Next using the previous theorem we see that
 $$\sg T_xC_{w_0y}\mid x\in X_0\sd_\cA=\sg \tC_{xw_0y}\mid x\in X_0\sd$$
 hence the second part of the theorem. 
\end{proof}
\begin{Rem}
Using the $\flat$-involution, we obtain right handed versions of all the results in this section. For instance, for all $y\in X_0$ and $x\in X_0^{-1}$, 
 there exists a unique family of polynomials $(\sp^r_{x',y})_{x'\in X_0, x'<y}$ in $\cA_{<0}$ such that 
\begin{equation*}
\label{rel-pol}
\tC_{yw_0x}=C_{yw_0}T_{x}+\sum_{x'\in X_0^{-1}} p^r_{x',yw_0}C_{yw_0}T_x'=C_{yw_0}\big(T_x+\sum_{x'\in X_0^{-1}}p^r_{x',yw_0}T_{x'}\big)
\end{equation*}
is stable under the bar involution. Also, if $y\in \bB_0^{-1}$, then we have $\tC_{yw_0x}=C_{yw_0x}$ for all $x\in X_0$. 
\end{Rem}


\section{On the cellular structure}
\label{main1}
In this section we prove the main result of this paper, that the lowest two-sided ideal $\cM_0$ of $W_e$ is affine cellular in the sense of Koenig and C.Xi \cite{KX}.
\subsection{Affine cell ideal}
\label{aff-cel}
Let $k$ be a principal ideal domain. For a $k$-algebra $A$, a $k$-linear anti-automorphism $i$ of $A$ satisfying $i^2 = id_A$ is called a $k$-involution on $A$. A quotient  $B=k[t_1, \dots, t_r]/I$ where $I$ is an ideal of $k[t_1, \dots, t_r]$ will be called an affine $k$-algebra. \\
 
For an affine $k$-algebra $B$ with a $k$-involution $\nu$, a free $k$-module $V$ of finite rank and a $k$-bilinear form $\varphi:V\times V \ra B$, denote by $\mathbb{A}(V,B,\varphi)$ the (possibly non-unital) algebra given as a $k$-module by $V\otimes_k B \otimes_k V$, on which we impose the multiplication $(v_1 \otimes b_1\otimes w_1)(v_2 \otimes b_2 \otimes w_2) := v_1 \otimes b_1 \varphi(w_1,v_2) b_2 \otimes w_2$. 

\begin{Rem}
Let $\Psi$ be the matrix representing the bilinear form $\psi$ with respect to some choice of basis $\{v_1,\ldots,v_n\}$ of $V$. Then the algebra $\mathbb{A}(V,B,\varphi)$ is nothing else than a generalised matrix algebra over $B$, that is the ordinary matrix algebra with coefficients in $B$ in which the multiplication is twisted by $\Psi$. Indeed we can identify an element 
$$\sum_{1\leq i,j\leq n} v_i\otimes b_{i,j}\otimes v_j$$  
of $\mathbb{A}(V,B,\varphi)$ with a $n\times n$ matrix $M=(b_{i,j})_{1\leq i,j\leq n}$ with coefficients in $B$. The multiplication of $M_1,M_2$ is then defined to be 
$$M_1\cdot M_2=M_1\Psi M_2.$$  
\end{Rem}
In this paper, we will use the following definition of affine cell ideal. 

\begin{Def}\cite[Proposition 2.3]{KX} \label{descrip}
Let $k$ be a principal ideal domain, $A$ a unitary $k$-algebra with a $k$-involution $i$. A two-sided ideal $J$ in $A$ is an affine cell ideal if and only if there exists an affine $k$-algebra $B$ with a $k$-involution $\nu$, a free $k$-module $V$ of finite rank and a bilinear form $\varphi: V \otimes V \ra B$, and an $A$-$A$-bimodule structure on $V \otimes_k B\otimes_kV$, such that $J \cong \mathbb{A}(V,B,\varphi)$ as an algebra and as an $A$-$A$-bimodule, and such that under this isomorphism the $k$-involution $i$ restricted to $J$ corresponds to the $k$-involution given by $v \otimes b\otimes w \mapsto w \otimes \nu(b) \otimes v$.
\end{Def}
An algebra $A$ is then affine cellular if one can find a chain of two-sided ideals $0=J_0 \subset J_1 \subset J_2 \subset \cdots \subset J_n=A$, such that each subquotient $J_i/J_{i-1}$ is an affine cell ideal in $A/J_{i-1}$. Then  $J_i/J_{i-1}$ is isomorphic to $\mathbb{A}(V_i,B_i,\varphi_i)$ for some finite-dimensional vector space $V_i$, a commutative $k$-algebra $B_i$ and a bilinear form $\varphi_i: V_i \times V_i \ra B_i$. Let $(\phi^i_{st})$ be the matrix representing the bilinear form $\phi$ with respect to some choice of basis of $V_i$. 
Then Koenig and Xi obtain a parametrisation of simple modules of an affine cellular algebra by establishing a bijection between isomorphism classes of  simple $A$-modules and the set 
$$\{ (j, \mathrm{m}) \mid 1 \leq j\leq n,\mathrm{m} \in \mathrm{MaxSpec}(B_j) \textrm{ such that some } \phi^j_{st} \not\in \mathrm{m}\}$$
where $\mathrm{MaxSpec}(B_j)$ denotes the maximal ideal spectrum of $B_j$.

\begin{Rem}
The notion of affine cellularity should really be understood as a generalisation of cellularity for finite dimensional algebra as defined by Graham and Lehrer in \cite{GL}. In fact, we obtain the definition of cellularity for finite dimensional algebras by setting $B=k$ in the above definition. We refer to \cite{KX0} for details.  
\end{Rem}

\subsection{The elements $\bP$}
\label{sec42}
We now introduce the $\bP$-elements, which are crucial ingredients for the definition of the cellular basis. Recall the definition of the polynomials $\sp_{x,x'}\in \cA$ for $x,x'\in X_0$ in Section \ref{relative-KL}. We (have) set for all $z\in \bB_0$ and $y\in\bB_0^{-1}$
$$\begin{array}{ccccccccc}
N_y:=\{xw_0y\mid x\in X_0\}& \cM_y:=\sg C_w\mid w\in N_y\sd_\cA\\
N^\cR_z:=\{zw_0x\mid x\in X^{-1}_0\}& \cM^\cR_z:=\sg C_w\mid w\in N^\cR_z\sd_\cA.\\
\end{array}$$
We also set 
$$\cM_+:=\cM_1\cap \cM_1^{\cR}=\sg C_{p_\tau w_0}\mid \tau\in P^+\sd_\cA.$$
\begin{Def}
For $z\in \bB_{0}$ and $\omega\in \bP^{+}$ we set 
$$\bP(z)=\sum_{x\in X_{0}} \sp_{x,z}T_{x}\quand \bP(\om)=\sum_{x\in X_{0}}\sp_{x,p_{\om}}T_{x}$$
and 
$$\bP_R(z^{-1})=\sum_{x\in X^{-1}_{0}} \sp^r_{x,z^{-1}}T_{x}\quand \bP_R(-\om)=\sum_{x\in X^{-1}_{0}}\sp^r_{x,p^{-1}_{\om}}T_{x}.$$
\end{Def}
By definition, we see that  for all $y\in \bB_0^{-1}$, $z\in \bB_0$ and $\om\in \bP^{+}$ we have 
$$\bP(z)C_{w_0y}=C_{zw_0y}\text{ and }\bP(\om)C_{w_0y}=C_{p_{\om}w_0y}$$
and
$$C_{zw_0}\bP_R(y)=C_{zw_0y}\text{ and }C_{zw_0}\bP_R(-\om)=C_{zw_0p_\om^{-1}}.$$
Further, we have $(\bP(z))^\flat=\bP_R(z^{-1})$ and $(\bP(\om))^\flat=\bP_R(-\om)$. \\

The element $w_0$ acts on the set of roots. For instance in type $A$, if we keep the notation of Example \ref{typeA}, we have $w_0(\al_i)=-\al_{n-i+1}$ for all $i=1,\ldots,n$. In type $B$, $C$, $D_{2n}$, $F$, $E_7$, $E_8$, $G$, $w_0$ simply acts as minus the identity. We refer to the tables in \cite{bourbaki} for a precise description of this action. Then the action of $-w_0$ induces
\begin{itemize}
\item a permutation of $\bP^+$ which in turn induces an action on $P^+$;
\item an automorphism of $W_0$ which sends the simple reflection with respect to $\al$ to the simple reflection associated to $-w_0(\al)$.
\end{itemize}
We will denote both the action on $P^+$ and the group automorphism by $\nu$. It can easily be shown that the group automorphism $\nu$ of $W_0$ extends to a group automorphism of $W_a$ and $W_e$ which, when non-trivial, is a diagram automorphism. In turn, we obtain an automorphism of $\cH$ defined by $\nu(T_w)=T_{\nu(w)}$ for all $w\in W_e$. Now by definition of $\nu$ we obtain the following important equalities:
$$\om w_0=-w_0\nu(\om)\quand p_{\om} w_0=w_0p^{-1}_{\nu(\om)} \text{ for all $\om \in P^+$}.$$

\begin{Prop}
Let $\om,\om'\in \bP^+$.  We have
\begin{enumerate}
\item $\bP(\om)C_{w_0}=C_{w_0}\bP_R(-\nu(\om))$;
\item $\bP(\om)\bP(\om')C_{w_0}=\bP(\om')\bP(\om)C_{w_0}.$
\end{enumerate} 
\end{Prop}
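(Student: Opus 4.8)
The plan is to reduce everything to the right-handed relative Kazhdan–Lusztig identities combined with the key equality $p_\om w_0=w_0p_{\nu(\om)}^{-1}$. For part (1), I would start from the defining property $\bP(\om)C_{w_0y}=C_{p_\om w_0 y}$, which is valid for every $y\in\bB_0^{-1}$; taking $y=1$ (noting $1\in\bB_0$, so $1\in\bB_0^{-1}$) gives $\bP(\om)C_{w_0}=C_{p_\om w_0}$. On the other hand, the right-handed version of the relative polynomials gives $C_{w_0}\bP_R(-\nu(\om))=C_{w_0 p_{\nu(\om)}^{-1}}$. So part (1) amounts to the identity $C_{p_\om w_0}=C_{w_0 p_{\nu(\om)}^{-1}}$ in $\cH$, and since $C$ is a basis indexed by $W_e$ this is exactly the group-theoretic equality $p_\om w_0=w_0 p_{\nu(\om)}^{-1}$ recorded just before the proposition. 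One small point to check: that $p_\om w_0$ and $w_0 p_{\nu(\om)}^{-1}$ both lie in $c_0$ and indeed equal $xw_0$ and $w_0 y$ for appropriate $x\in X_0$, $y\in X_0^{-1}$, so that both sides are covered by the constructions of Section \ref{relative-KL}; this follows because $p_\om w_0 = p_\om \add w_0$ and $w_0 p_{\nu(\om)}^{-1} = w_0 \add p_{\nu(\om)}^{-1}$ (reduced factorisations, which hold since $\om,\nu(\om)\in\bP^+\subset P^+$ are dominant).

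For part (2), I would apply part (1) twice and use commutativity of $\Pi\ltimes P$-translations. Concretely: $\bP(\om)\bP(\om')C_{w_0}=\bP(\om)\,C_{w_0}\bP_R(-\nu(\om'))$ by part (1) applied to $\om'$. Now I need to move $\bP(\om)$ across $C_{w_0}$: but $\bP(\om)C_{w_0}=C_{w_0}\bP_R(-\nu(\om))$, and since the right action of $\bP_R(-\nu(\om'))$ commutes with left multiplication by anything, we get $\bP(\om)\,C_{w_0}\bP_R(-\nu(\om'))=C_{w_0}\bP_R(-\nu(\om))\bP_R(-\nu(\om'))$. The same computation starting from $\bP(\om')\bP(\om)C_{w_0}$ yields $C_{w_0}\bP_R(-\nu(\om'))\bP_R(-\nu(\om))$. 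So part (2) reduces to the commutativity $\bP_R(-\nu(\om))\bP_R(-\nu(\om'))C_{w_0}=\bP_R(-\nu(\om'))\bP_R(-\nu(\om))C_{w_0}$ — wait, more precisely to $C_{w_0}\bP_R(-\nu(\om))\bP_R(-\nu(\om'))=C_{w_0}\bP_R(-\nu(\om'))\bP_R(-\nu(\om))$. This in turn follows by evaluating: $C_{w_0}\bP_R(-\nu(\om))\bP_R(-\nu(\om'))$ should equal $C_{w_0 p_{\nu(\om)}^{-1} p_{\nu(\om')}^{-1}} = C_{w_0 p_{\nu(\om)+\nu(\om')}^{-1}}$ using the right-handed defining property of $\bP_R$ iterated, and the analogous expression with $\om,\om'$ swapped gives $C_{w_0 p_{\nu(\om')+\nu(\om)}^{-1}}$, which is the same element of $W_e$ because $P$ is abelian.

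The main obstacle I anticipate is the iteration step in part (2): the identity $C_{zw_0}\bP_R(y)=C_{zw_0y}$ is stated only for $y\in\bB_0^{-1}$ (or $y\in X_0$), and $-\nu(\om)$ is a fundamental $L$-weight while $-\nu(\om)-\nu(\om')$ need not be. So one cannot blindly iterate; instead one must argue that $C_{w_0}\bP_R(-\nu(\om))=C_{w_0 p_{\nu(\om)}^{-1}}$ and that $p_{\nu(\om)}^{-1}$, when multiplied further by $\bP_R(-\nu(\om'))$, still lands correctly — this is where one uses that the right action on $\cM_1^\cR$ is well-defined on all of $N^\cR_1$ and that $w_0 p_\tau^{-1}$ with $\tau\in P^+$ sits in $\cM_+=\cM_1\cap\cM_1^\cR$, together with the fact that $\bP_R(-\om)$ acts the same way regardless of which $C_{zw_0}$ it multiplies (independence of the relative polynomials, analogous to the remark after Theorem \ref{Gind}). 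Once this well-definedness is in hand, the abelianness of $P$ closes part (2) immediately, and part (1) is essentially a restatement of the displayed equality $p_\om w_0=w_0 p_{\nu(\om)}^{-1}$.
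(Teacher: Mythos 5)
Your part (1) is exactly the paper's argument ($\bP(\om)C_{w_0}=C_{p_\om w_0}=C_{w_0p_{\nu(\om)}^{-1}}=C_{w_0}\bP_R(-\nu(\om))$), and your reduction of part (2) to the identity $C_{w_0}\bP_R(-\nu(\om))\bP_R(-\nu(\om'))=C_{w_0}\bP_R(-\nu(\om'))\bP_R(-\nu(\om))$ is also correct. The gap is in the final ``evaluation'' step. You claim that $C_{w_0}\bP_R(-\nu(\om))\bP_R(-\nu(\om'))$ equals the single basis element $C_{w_0p_{\nu(\om)+\nu(\om')}^{-1}}$ by iterating the defining property of $\bP_R$. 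This is false, and the paper warns against precisely this in the remark following the definition of $\bP(\tau)$: one does \emph{not} have $\bP(\tau)C_{w_0y}=C_{p_\tau w_0 y}$ for non-fundamental $\tau$. The identity $C_{zw_0}\bP_R(-\om')=C_{zw_0p_{\om'}^{-1}}$ requires $z\in\bB_0$; after the first step you are at $C_{w_0p_{\nu(\om)}^{-1}}=C_{p_\om w_0}$ with $p_\om\notin\bB_0$ in general, so the second multiplication yields $\tC_{p_\om w_0p_{\nu(\om')}^{-1}}$, which differs from $C_{p_\om w_0p_{\nu(\om')}^{-1}}$ by lower Kazhdan--Lusztig terms. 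Concretely, in type $\tA_2$ the combinatorics of Section 6 give $\bP(\om_1)\bP(\om_2)C_{w_0}=C_{p_{\om_1}p_{\om_2}w_0}+C_{w_0}$, not one term. Your fallback via ``independence of the relative polynomials'' does not repair this: it only says that $\bP_R(-\nu(\om'))$ is a fixed element of $\cH$, and since $\cH$ is noncommutative there is no a priori reason for $\bP_R(-\nu(\om))\bP_R(-\nu(\om'))$ and $\bP_R(-\nu(\om'))\bP_R(-\nu(\om))$, or their right actions on $C_{w_0}$, to coincide.

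The paper closes this gap by a different mechanism. It notes that $\bP(\om)\bP(\om')C_{w_0}=C_{w_0}\bP_R(-\nu(\om))\bP_R(-\nu(\om'))$ lies in $\cM_+=\cM_1\cap\cM_1^{\cR}$, hence is an $\cA$-linear combination of the elements $C_{p_\tau w_0}$, each of which satisfies $\nu(C_{p_\tau w_0})^\flat=C_{p_\tau w_0}$. Therefore the map $h\mapsto\nu(h^\flat)$ fixes $\bP(\om)\bP(\om')C_{w_0}$; on the other hand, computing $\nu\bigl((\bP(\om)\bP(\om')C_{w_0})^\flat\bigr)$ explicitly, using $\bP(\mu)^\flat=\bP_R(-\mu)$ and part (1), reverses the order of the two factors and produces $\bP(\om')\bP(\om)C_{w_0}$. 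Some argument of this kind is needed to complete your part (2).
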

\begin{proof}
We prove (1). For all $\om\in\bP^+$ we have 
$$\bP(\om)C_{w_0}=C_{p_{\om}w_0}=C_{w_0p^{-1}_{\nu(\om)}}=C_{w_0}\bP_R(-\nu(\om)).$$
We prove (2). First we show that $\nu$ and $\flat$ commutes. Indeed we have for all $w\in W_e$
$$\nu(C^{\flat}_{w})=\nu(C_{w^{-1}})=C_{\nu(w^{-1})}=C_{(\nu(w))^{-1}}=C_{\nu(w)}^\flat.$$
Next we show that $\nu\circ\flat$ acts as the identity on $\bP(\om)\bP(\om')C_{w_0}$. Since 
$$\bP(\om)\bP(\om')C_{w_0}=C_{w_0}\bP_R(-\nu(\om))\bP_R(-\nu(\om')) $$
we see that the element $\bP(\om)\bP(\om')C_{w_0}$ lies in $\cM_1\cap \cM_1^\cR=\cM_+$.
Hence $\bP(\om)\bP(\om')C_{w_0}$ is an $\cA$-linear combination of elements of the form $C_{p_{\tau}w_0}$. Now we have
$$\nu(C_{p_{\tau}w_0})^\flat=\nu (C_{w_0p^{-1}_{\tau}})=\nu(C_{p_{\nu(\tau)}w_0})=C_{p_{\tau}w_0}.$$
On the one hand we have  
\begin{align*}
\nu(\bP(\om)\bP(\om')C_{w_0})^\flat&=\bP(\om)\bP(\om')C_{w_0}
\end{align*}
and on the other hand 
\begin{align*}
\nu(\bP(\om)\bP(\om')C_{w_0})^\flat&=\nu(C_{w_0}^\flat \bP(\om')^\flat\bP(\om)^\flat)\\
&=\nu(C_{w_0}\bP_R(-\om')\bP_R(-\om))\\
&=\nu(\bP(\nu(\om'))\bP(\nu(\om))C_{w_0})\\
&=\bP(\om')\bP(\om)C_{w_0}
\end{align*}
and the result follows. 
\end{proof}
\begin{Def}
For  $\tau=\om_{1}+\om_{2}\ldots+ \om_{k}\in P^+$ (with $\om_{i}\in \bP^{+}$ for all $i$) we set 
$$\bP(\tau)=\bP(\om_{1})\bP(\om_{2})\ldots \bP(\om_{k})$$
and 
$$\bP_R(-\tau)=\bP_R(-\om_{k})\bP_R(-\om_{k-1})\ldots \bP_R(-\om_{1}).$$
\end{Def}
Note that since $P^+$ is an abelian group, we need part (2) of the previous proposition for the elements $\bP(\tau)$ to be well-defined.
\begin{Rem}
By construction, we have
$$\bP(z)C_{w_0y}=C_{zw_0y}\text{ and }\bP(\om)C_{w_0y}=C_{p_{\om}w_0y}$$
for all $y\in \bB_0^{-1}$, $z\in \bB_0$ and $\om\in \bP^{+}$. It is important to notice however that we {\it do not} have $\bP(\tau)C_{w_{0}y}=C_{p_{\tau} w_{0}y}$. Indeed, we  {\it did not} define $\bP(\tau)$ as $\sum_{x\in X_{0}} \sp_{x,p_{\tau}}T_{x}$. The decomposition of $\bP(\tau)C_{w_{0}y}$  in the Kazhdan-Lusztig basis will be discuss in Section 5. 
\end{Rem}

\begin{Lem}
\label{basis-tau}
\begin{enumerate}
\item The set $\{\bP(\tau)C_{w_{0}}\mid \tau\in P^+\}$ is an $\cA$-basis of $\cM_{+}$.
\item The set  $\{\bP(z)\bP(\tau)C_{w_{0}y}\mid z\in \bB_0, \tau\in P^+\}$ is an $\cA$-basis of $\cM_{y}$ for all $y\in\bB^{-1}_0$.
\end{enumerate}
\end{Lem}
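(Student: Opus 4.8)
The plan is to deduce both statements from the known triangular change of basis between the $\bP$-elements acting on $C_{w_0y}$ and the Kazhdan--Lusztig basis, together with the freeness results already established in Section~\ref{relative-KL}. I would organize the argument as follows.

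First I would treat (2), since (1) is the special case $y = w_0^{-1}$ (equivalently the intersection with $\cM_1^\cR$, as in the definition of $\cM_+$). By Theorem~\ref{Gind} and the subsequent remark, for $y\in\bB_0^{-1}$ we have $\cM_y=\sg C_{xw_0y}\mid x\in X_0\sd_\cA$ and the $C_{xw_0y}$, $x\in X_0$, form an $\cA$-basis of $\cM_y$ (they are the Kazhdan--Lusztig basis elements indexed by the elements of $N_y$, which by Theorem~\ref{lowest-exp} are exactly the $w\in c_0$ with $w = x\add w_0\add y^{-1}$... more precisely by the decomposition $c_0=\bigsqcup_{z'}N_{z'}$). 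So it suffices to show that $\{\bP(z)\bP(\tau)C_{w_0y}\mid z\in\bB_0,\tau\in P^+\}$ is another $\cA$-basis of the same free module. For this I would show the transition between the two families is triangular with invertible (indeed, $\pm$ a power of $q$, or unipotent) diagonal entries with respect to a suitable order, so that it is an $\cA$-basis change. Concretely: by construction $\bP(z)C_{w_0y}=C_{zw_0y}$ and $\bP(\om)C_{w_0y}=C_{p_\om w_0y}$; the issue is only the composite $\bP(z)\bP(\tau)C_{w_0y}$, and here I would expand $\bP(\tau)=\bP(\om_1)\cdots\bP(\om_k)$ and use the formula $\bP(x)=\sum_{x'\in X_0}\sp_{x',x}T_{x'}$ together with the description of the $T_s$-action on $\cM$ from the proof of Theorem~\ref{Gind}. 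The upshot is that $\bP(z)\bP(\tau)C_{w_0y}$ equals $C_{z p_\tau w_0 y}$ plus an $\cA_{<0}$-combination (or $\cA_{\le 0}$-combination — one must be slightly careful) of $C_{w}$ with $w$ strictly smaller in Bruhat order among elements of $N_y$. Since $z\mapsto z$ and $(z,\tau)\mapsto z p_\tau w_0 y$ biject $\bB_0\times P^+$ onto $N_y$ by Theorem~\ref{lowest-exp}, the change of basis matrix is lower-triangular with all diagonal entries equal to $1$ in the Bruhat order, hence invertible over $\cA$, and (2) follows.

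To make the triangularity precise, the key intermediate claim I would isolate and prove is: for $z\in\bB_0$, $\tau\in P^+$ and $y\in\bB_0^{-1}$,
\[
\bP(z)\bP(\tau)C_{w_0y}\equiv C_{zp_\tau w_0 y}\pmod{\sum_{w<zp_\tau w_0y,\ w\in N_y}\cA\, C_w}.
\]
This I would establish by induction on the number $k$ of fundamental weights appearing in $\tau$, the base case $k=0$ being $\bP(z)C_{w_0y}=C_{zw_0y}$. For the inductive step, write $\tau=\tau'+\om$ with $\om\in\bP^+$; by induction $\bP(z)\bP(\tau')C_{w_0y}=C_{zp_{\tau'}w_0y}+(\text{lower terms})$, and then I would apply $\bP(\om)=\sum_{x}\sp_{x,p_\om}T_x$ and analyze $T_x C_{w}$ for $w$ in the lowest cell using Example~\ref{easy-rel} and the multiplication rules; the leading term $T_{x}C_{zp_{\tau'}w_0y}$ for the relevant $x$ reconstructs $C_{zp_\tau w_0 y}$ up to lower terms because $z\add p_{\tau'}\add w_0$ composed appropriately with the translation part of $p_\om$ lands in the lowest cell with the expected leading length, by Theorem~\ref{lowest-exp}. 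One subtlety is that $\bP(\tau)$ is genuinely not $\sum_x\sp_{x,p_\tau}T_x$ (as the Remark after the definition of $\bP(\tau)$ warns), so the "lower terms" are not automatically in $\cM_y$ a priori; but they are, because each $\bP(\om_i)$ maps $\cM$ into $\cM$ and the whole expression lies in $\cM_y$ by construction.

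Finally, part (1) follows by specialization: taking $y=w_0^{-1}$ one gets that $\{\bP(z)\bP(\tau)C_{w_0w_0^{-1}}\}$... more cleanly, $\cM_+=\cM_1\cap\cM_1^\cR=\sg C_{p_\tau w_0}\mid\tau\in P^+\sd_\cA$ by definition, and the same triangularity argument with $z=1$, $y$ trivial shows $\bP(\tau)C_{w_0}=C_{p_\tau w_0}+(\text{lower terms in }\cM_+)$, so $\{\bP(\tau)C_{w_0}\mid\tau\in P^+\}$ is a triangular change of basis from $\{C_{p_\tau w_0}\mid\tau\in P^+\}$, hence an $\cA$-basis of $\cM_+$.

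\emph{Main obstacle.} The routine parts are the multiplication-rule bookkeeping. The real difficulty is controlling the leading term through the composite $\bP(\om_1)\cdots\bP(\om_k)$: one must show that no cancellation destroys the diagonal term $C_{zp_\tau w_0y}$ and that the order chosen (Bruhat order restricted to $N_y$, or equivalently the dominance/translation order on $P^+$ via $\tau\mapsto p_\tau w_0 y$) is genuinely compatible with the recursion. In other words, the crux is proving that the map $(z,\tau)\mapsto z p_\tau w_0 y$ is order-preserving in the appropriate sense and that $\ell(zp_\tau w_0y)=\ell(z)+\ell(p_\tau)+\ell(w_0)+\ell(y)$ — which is exactly the content of Theorem~\ref{lowest-exp} and the Remark following it — so that the leading exponent of $q$ in $\bP(z)\bP(\tau)C_{w_0y}$ matches that of $C_{zp_\tau w_0y}$. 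Once that length-additivity and the bijection are in hand, the triangularity, and hence both basis statements, drop out.
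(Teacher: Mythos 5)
Your proposal is correct and follows essentially the same route as the paper: membership of the elements in $\cM_+$ (via the commutation $\bP(\tau)C_{w_0}=C_{w_0}\bP_R(-\tau)$) resp. in $\cM_y$ (via the left-ideal property), followed by a unitriangular change of basis from the Kazhdan--Lusztig basis with leading term $C_{zp_\tau w_0 y}$. The paper simply states the triangularity as following "easily", whereas you spell out the induction on the fundamental weights appearing in $\tau$; this is consistent with (and is essentially a preview of) the more precise analysis the paper carries out later in Theorem \ref{general} and Corollary \ref{all-basis}.
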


\begin{proof}
(1) Since $\bP(\tau)C_{w_0}=C_{w_0}\bP_{R}(-\tau)$ we see that 
$$ \bP(\tau)C_{w_0}\in \cM_1\cap \cM^\cR_1=\cM_+.$$
Then the result follows easily by a triangularity property:
$$\bP(\tau)C_{w_0}=C_{p_\tau w_0}+\sum_{z<p_\tau w_0} \cA C_{z}.$$
(2) It is clear that $\bP(z)\bP(\tau)C_{w_{0}y}\in \cM_y$ since $C_{w_0y}\in \cM_y$ and $\cM_y$ is a left ideal.  Once again, the result follows easily by a triangularity argument.

\end{proof}

\subsection{Main result}
As our principal ideal domain $k$, we choose $\cA$ and we set $B$ to be the monoid algebra $\cA[P^+]=\{e^\tau\mid \tau\in P^+\}$. Note that $B$ is isomorphic to the ring of polynomials in $\text{Card}(\bP^{+})$ indeterminates. The proof is similar to the one in \cite{jeju-mimi}. The only difference here is that we need to introduce an involution on $B$ defined by $\nu e^{\tau}=e^{\nu(\tau)}$ where $\nu$ is the involution introduced at the beginning of Section \ref{sec42}. It did not appear in \cite{jeju-mimi} because in type $G_2$ and $B_2$, it is simply the identity.

\medskip

Let $z,z'\in\bB_0$. On the one hand  we have 
$$C_{w_{0}z^{-1}}C_{{z'}^{-1}w_{0}}\in C_{w_0z^{-1}}\cH\in\cM_1^{\cR}$$
and on the other hand 
$$C_{w_{0}z^{-1}}C_{{z'}^{-1}w_{0}}\in \cH C_{{z'}^{-1}w_0}\in\cM_1$$
therefore $C_{w_{0}z^{-1}}C_{{z'}^{-1}w_{0}}\in \cM_+$. Thus, by Lemma \ref{basis-tau}, we have
$$C_{w_{0}z^{-1}}C_{{z'}^{-1}w_{0}}=\sum_{\tau \in P^+} a^{z,z'}_{\tau}\bP(\tau)C_{w_{0}}\text{ where $a^{z,z'}_{\tau}\in \cA$}.$$

Let $V$ be the free $\cA$-module of rank $\text{Card}(\bB_0)$ on basis $\{v_z,z\in\bB_0\}$ and define the $\cA$-bilinear form $\varphi$ by
$$\begin{array}{ccccccc}
\varphi:&V\times V &\longrightarrow & B\\
& (v_{z},v_{z'})&\longmapsto& \underset{\tau\in P^{+}}{\sum}a^{z,z'}_{\tau}e^\tau.
\end{array}$$
This defines an algebra $\mathbb{A}(V,B,\varphi)\cong V \otimes_{\cA} B\otimes_{\cA} V$ with multiplication $\cA$-bilinearly extended from $(v_{z_i} \otimes e^\tau \otimes v_{z_j})(v_{z_k} \otimes e^{\tau'} \otimes v_{z_\ell}) = v_{z_i} \otimes e^\tau \varphi(v_{z_j},v_{z_k})  e^{\tau'} \otimes v_{z_\ell}$  as in Section~\ref{aff-cel}.

\medskip

We now define a map $$\Phi: \mathbb{A}(V,B,\varphi) \ra \cH$$ by $$v_{z} \otimes e^\tau \otimes v_{{z'}^{-1}} \mapsto \bP(z)\bP(\tau)C_{w_{0}}\bP_{R}({z'}^{-1})$$
for basis elements $v_z,v_{z'} $ of V and $\tau \in P^+$.
We have
$$\bP(z)\bP(\tau)C_{w_{0}}\bP_{R}({z'}^{-1})\in \cH C_{w_{0}}\cH\subseteq \sum_{z\leq_{\cLR} w_{0}} \cA C_{z}.$$
Hence, the image of $ \Phi$ is contained in $\cM_0$, the two-sided ideal associated to the lowest two-sided cell $c_0$.

\begin{Th}
The two-sided ideal $\cM_{0}$ is a an affine cell ideal. More precisely:
\begin{enumerate}
\item \label{isom}
The map $\Phi:\mathbb{A}(V,B,\varphi) \ra \cM_0$ is an isomorphism of $\cA$-algebras. 
\item \label{bimod} Using \eqref{isom} to define left and right $\cH$-module structures on  $\mathbb{A}(V,B,\varphi)$ by letting  $h \in \cH$ act, for $v,w \in V$ and $b \in B$, as 
$$h(v\otimes b \otimes w) = \Phi^{-1}(h\Phi(v \otimes b\otimes w)$$ and $$(v\otimes b\otimes w)h = \Phi^{-1}(\Phi(v\otimes b\otimes w)h)$$ respectively, $\Phi$ is an isomorphism of $\cH$-$\cH$-bimodules.
\item \label{invol} We have $\Phi(v\otimes b \otimes w)^{\flat}=\Phi(w \otimes \nu(b) \otimes v)$ for $v,w \in V$ and $b \in B$ and where $\nu$ is the involution defined by $\nu(e^\tau)=e^{\nu(\tau)}$. 
\end{enumerate}
\end{Th}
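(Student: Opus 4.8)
The plan is to establish the three assertions in turn, the only substantive work being in \eqref{isom}; granting it, \eqref{bimod} is purely formal and \eqref{invol} a short $\flat$-computation. For the bijectivity half of \eqref{isom} I would first observe that $\Phi$ carries the $\cA$-basis $\{v_z\otimes e^\tau\otimes v_{z'^{-1}}\mid z,z'\in\bB_0,\ \tau\in P^+\}$ of $\mathbb{A}(V,B,\varphi)=V\otimes_\cA B\otimes_\cA V$ bijectively onto the family $\{\bP(z)\bP(\tau)C_{w_0}\bP_R(z'^{-1})\mid z,z'\in\bB_0,\ \tau\in P^+\}$. For fixed $z'\in\bB_0$ one has $C_{w_0}\bP_R(z'^{-1})=C_{w_0z'^{-1}}$, so Lemma \ref{basis-tau}(2) applied with $y=z'^{-1}\in\bB_0^{-1}$ shows that $\{\bP(z)\bP(\tau)C_{w_0z'^{-1}}\mid z\in\bB_0,\ \tau\in P^+\}$ is an $\cA$-basis of $\cM_{z'^{-1}}$; since $c_0=\bigsqcup_{y\in\bB_0^{-1}}N_y$ we have $\cM_0=\bigoplus_{y\in\bB_0^{-1}}\cM_y$, and the union over $z'\in\bB_0$ is therefore an $\cA$-basis of $\cM_0$. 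Thus $\Phi$ is an isomorphism of free $\cA$-modules onto $\cM_0$, which in particular identifies its image.

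It then remains to check $\Phi(xy)=\Phi(x)\Phi(y)$, and by $\cA$-bilinearity it suffices to do so on basis elements $x=v_z\otimes e^\tau\otimes v_{z'^{-1}}$, $y=v_{z_1}\otimes e^{\tau'}\otimes v_{z_1'^{-1}}$. In $\Phi(x)\Phi(y)$ the outer factors $\bP(z)\bP(\tau)$ and $\bP_R(z_1'^{-1})$ are inert; using $\bP(\tau')C_{w_0}=C_{w_0}\bP_R(-\nu(\tau'))$ (the extension to $P^+$ of part (1) of the Proposition) the problem reduces to evaluating the ``middle contraction''
\[
C_{w_0}\bP_R(z'^{-1})\,\bP(z_1)\,C_{w_0}=\big(C_{w_0}\bP_R(z'^{-1})\big)\big(\bP(z_1)C_{w_0}\big)=C_{w_0z'^{-1}}C_{z_1w_0}.
\]
Because each element of $\bB_0$ lies in the dominant chamber, $\bB_0\subseteq X_0$, so $C_{w_0z'^{-1}}\in\cM_1^{\cR}$ and $C_{z_1w_0}\in\cM_1$, whence $C_{w_0z'^{-1}}C_{z_1w_0}\in\cM_1\cap\cM_1^{\cR}=\cM_+$; by the construction of $\varphi$ this element equals $\sum_{\sigma\in P^+}a^{z',z_1}_\sigma\bP(\sigma)C_{w_0}$. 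Substituting it back, reassociating $C_{w_0}\bP_R(-\nu(\tau'))$ to $\bP(\tau')C_{w_0}$, and using part (2) of the Proposition together with the definition of $\bP(\cdot)$ on a sum to rewrite $\bP(\tau)\bP(\sigma)\bP(\tau')C_{w_0}=\bP(\tau+\sigma+\tau')C_{w_0}$, one obtains
\[
\Phi(x)\Phi(y)=\sum_{\sigma\in P^+}a^{z',z_1}_\sigma\,\bP(z)\bP(\tau+\sigma+\tau')C_{w_0}\bP_R(z_1'^{-1})=\Phi\big(v_z\otimes e^{\tau}\varphi(v_{z'},v_{z_1})e^{\tau'}\otimes v_{z_1'^{-1}}\big)=\Phi(xy),
\]
which finishes \eqref{isom}.

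Assertion \eqref{bimod} is then immediate: $\cM_0$ is a two-sided ideal of $\cH$, hence an $\cH$-$\cH$-bimodule by restriction of the regular action, and one simply \emph{defines} the bimodule structure on $\mathbb{A}(V,B,\varphi)$ by transport through $\Phi$ exactly as in the statement, so that $\Phi$ is tautologically a bimodule isomorphism; compatibility with the algebra structure is inherited from $\cM_0$. For \eqref{invol} it again suffices to argue on basis elements. Since $\flat$ is an antiautomorphism with $C_{w_0}^{\flat}=C_{w_0}$, $(\bP(z))^{\flat}=\bP_R(z^{-1})$, $(\bP_R(z^{-1}))^{\flat}=\bP(z)$ and $(\bP(\tau))^{\flat}=\bP_R(-\tau)$, one computes
\[
\Phi(v_z\otimes e^\tau\otimes v_{z'^{-1}})^{\flat}=\bP(z')\,C_{w_0}\bP_R(-\tau)\,\bP_R(z^{-1}).
\]
Applying part (1) of the Proposition to $\nu(\tau)$ and using $\nu^2=\mathrm{id}$ gives $\bP(\nu(\tau))C_{w_0}=C_{w_0}\bP_R(-\tau)$, so the right-hand side equals $\bP(z')\bP(\nu(\tau))C_{w_0}\bP_R(z^{-1})=\Phi\big(v_{z'^{-1}}\otimes\nu(e^\tau)\otimes v_z\big)$, which is $\Phi(w\otimes\nu(b)\otimes v)$ for $v=v_z$, $b=e^\tau$, $w=v_{z'^{-1}}$, after the evident identification of the two copies of $V$ by inversion of indices.

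The step I expect to be the main obstacle is the middle-contraction identity inside \eqref{isom}: one must check carefully that $C_{w_0}\bP_R(z'^{-1})\bP(z_1)C_{w_0}$ genuinely collapses to a product of two Kazhdan--Lusztig basis elements landing in $\cM_+$ (this is precisely where $\bB_0\subseteq X_0$ is used) and that its expansion in the basis $\{\bP(\sigma)C_{w_0}\}$ returns exactly the structure constants defining $\varphi$; everything else is routine bookkeeping with the relations $\bP(z)C_{w_0y}=C_{zw_0y}$, $C_{zw_0}\bP_R(y)=C_{zw_0y}$, $\bP(\om)C_{w_0}=C_{w_0}\bP_R(-\nu(\om))$, the commutativity statement of the Proposition, and the involution $\nu$ on $B$.
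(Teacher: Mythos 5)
Your proposal is correct and follows essentially the same route as the paper: the multiplicativity of $\Phi$ is reduced to the middle contraction $C_{w_0}\bP_R({z'}^{-1})\bP(z_1)C_{w_0}=C_{w_0{z'}^{-1}}C_{z_1w_0}\in\cM_+=\cM_1\cap\cM_1^{\cR}$, expanded via the structure constants $a^{z',z_1}_\sigma$ defining $\varphi$, bijectivity comes from the triangularity of $\bP(z)\bP(\tau)C_{w_0}\bP_R({z'}^{-1})$ in the Kazhdan--Lusztig basis (your detour through Lemma \ref{basis-tau} and $\cM_0=\bigoplus_y\cM_y$ is just a repackaging of that), and the $\flat$-computation for (3) is identical. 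No gaps.
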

\begin{proof}
The map $\Phi$ is $\cA$-linear by definition. We have, for basis elements $v_{z_i},v_{z_j},v_{z_k},v_{z_\ell}$ of $V$ and $\tau,\tau' \in P^+$,
\begin{align*}
&\Phi\big(v_{z_i}\otimes e^\tau\otimes v_{z_j}\big)\Phi\big(v_{z_k}\otimes e^{\tau'}\otimes v_{z_l}\big)\\
&=\bP(z_{i})\bP(\tau)C_{w_{0}}\bP_{R}(z^{-1}_{j})\bP(z_{k})\bP(\tau')C_{w_{0}}\bP_{R}(z^{-1}_{l})\\
&=\bP(z_{i})\bP(\tau)C_{w_{0}z^{-1}_{j}}\bP(z_{k})C_{w_{0}}\bP_{R}(-\tau')\bP_{R}(z^{-1}_{l})\\
&=\bP(z_{i})\bP(\tau)C_{w_{0}z^{-1}_{j}}C_{z_{k}w_{0}}\bP_{R}(-\tau')\bP_{R}(z^{-1}_{l})\\
&=\bP(z_{i})\bP(\tau)\Big(\sum_{\tau \in P^+} a^{z_j,z_k}_{\tau}\bP(\tau)\Big)C_{w_{0}}\bP_{R}(-\tau')\bP_{R}(z_{l})\\
&=\bP(z_{i})\bP(\tau)\Big(\sum_{\tau \in P^+} a^{z_j,z_k}_{\tau}\bP(\tau)\Big)\bP(\tau')C_{w_{0}}\bP_{R}(z^{-1}_{l})\\
&=\Phi\big(v_{z_i}\otimes e^\tau\varphi(v_{z_j},v_{z_k})e^{\tau'}\otimes v_{z_{\ell}}\big).
\end{align*}
So $\Phi$ is  a morphism of $\cA$-algebras. 
The fact that $\Phi$ is bijective follows easily from the fact that 
$$\bP(z_i)\bP(\tau)C_{w_{0}}\bP_{R}(z^{-1}_{j})=C_{z_{i}p_\tau w_{0}z^{-1}_{j}}+\sum_{z<z_{i}p_\tau w_{0}z^{-1}_{j}}\cA C_{z}.$$ This completes the proof of \eqref{isom}. Statement \eqref{bimod} follows directly from the definition and the fact that $\cM_{0}$ is an $\cH$-$\cH$-bimodule.
We prove Claim \eqref{invol}. Let $v_{z_i},v_{z_j}$ be basis elements of $V$ and $\tau \in P^+$. We have
\begin{align*}
\Phi(v_{z_i}\otimes e^\tau\otimes v_{z_j})^{\flat}&=\big(\bP(z_{i})\bP(\tau)C_{w_{0}}\bP_{R}(z^{-1}_{j})\big)^{\flat}\\
&=(\bP_{R}(z^{-1}_{j}))^{\flat}(C_{w_{0}})^{\flat}(\bP(\tau))^{\flat}(\bP(z_{i}))^{\flat}\\
&=\bP(z_{j})C_{w_{0}}\bP_{R}(-\tau)\bP_{R}(z^{-1}_{i})\\
&=\bP(z_{j})\bP(\nu(\tau))C_{w_{0}}\bP_{R}(z^{-1}_{i})\\
&=\Phi(v_{z_j}\otimes \nu(e^\tau)\otimes v_{z_i}).
\end{align*}
Statement (3) follows from $\cA$-linearity.

\end{proof}


\section{On the cellular basis}
\label{dec-cel-basis}
In this section, we study the decomposition of the cellular basis in the Kazhdan-Lusztig basis. We set $\bB_0:=\{z_1,\ldots,z_n\}$ and for all $1\leq i,j\leq n$ and $\tau\in P^+$. 
We will show that 
\begin{equation*}
\bP(\tau)C_{w_{0}}=\sum_{\tau'\in P^+} m_{\tau'} C_{p_{\tau'}w_0} \tag{$\ast$}
\end{equation*}
where the coefficients $m_{\tau'}$ are integers. Note that, if we prove $(\ast)$ then we can deduce the expression of any element of the cellular basis in the Kazhdan-Lusztig basis using Corollary \ref{all-basis} below:
$$\bP(z_i)\bP(\tau)C_{w_0}\bP_R(z_j^{-1})=\bP(z_i)\big( \sum m_{\tau'} C_{p_{\tau'}w_0}\big)\bP_R(z_j^{-1})=\sum m_{\tau'} C_{z_ip_{\tau'}w_0 z_j^{-1}}.$$

\begin{Rem}
In this section, it will sometime be more convenient to enumerate the elements of the lowest two-sided cell as $z_i w_0p_\la z_j$ with $\la \in P^-=-P^+$
instead of $z_i p_\tau w_0 z_j$ with $\tau \in P^+$. Indeed, these are the same since we have $p_\tau w_0=w_0p_{\nu(\tau)}^{-1}=w_0p_{-\nu(\tau)}$. As much as possible in order to avoid confusion, we will use the letter $\la$ for negative weights and $\tau$ for positive weights. 
\end{Rem}

\subsection{Multiplication of the standard and the Kazhdan-Lusztig bases}
\label{standard}
For $x,y\in W_e$, we set
$$T_{x}T_{y}=\sum_{z\in W_e} f_{x,y,z}T_{z}\text{ where $f_{x,y,z}\in\cA$.}$$
In this section, we present some results of \cite[\S 2.3]{jeju2} that will be needed later on and we introduce some notation. 
 For $\al\in\Phi^{+}$, we have set $\cF_{\al}=\{H_{\al,n}\mid n\in\nZ\}$. For $x,y\in W$ we set 
 \begin{align*}
H_{x,y}&=\{H\in \mathcal{F}\mid H\in H(A_{0},yA_{0})\cap H(yA_{0},xyA_{0})\}\text{ and }\\
I_{x,y}&=\{\al\in\Phi^{+}\mid H_{x,y}\cap \cF_{\al}\neq \emptyset\}.
\end{align*}
Finally for  $\al\in I_{x,y}$, let
$$c_{x,y}(\al)=\underset{H\in H_{x,y}\cap \cF_{\al}}{\text{max }} L_{H} \quand c_{x,y}=\underset{\al\in I_{x,y}}{\sum}c_{x,y}(\al).$$
Then we have the following result on the degree of the structure constants $f_{x,y,z}$ with respect to the standard basis \cite[Theorem 2.4]{jeju2}. 
\begin{Th}
\label{bound first}
Let $x,y\in W_e$. We have $\deg(f_{x,y,z})\leq c_{x,y}$ for all $z\in W_e$. 
\end{Th}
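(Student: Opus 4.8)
The plan is to prove the bound by induction on $\ell(x)$, removing one generator from the left of $x$ at a time by means of the defining multiplication rule of $\cH$, and keeping track of how the geometric data attached to $(x,y)$ evolves. Two elementary facts are used throughout. First, writing $H(A,C)$ for the set of hyperplanes of $\cF$ separating two alcoves $A$ and $C$, one has for any three alcoves $H(A,C)=H(A,B)\bigtriangleup H(B,C)$ (symmetric difference: a hyperplane separating $A$ from $C$ must separate exactly one of $A,C$ from $B$); in particular, with the notation of the statement, $H_{x,y}=H(A_0,yA_0)\cap H(yA_0,xyA_0)$. Second, crossing from an alcove $D$ to $sD$ with $s\in S$ amounts to crossing a single hyperplane, which supports a face of type $s$ and therefore has weight $L(s)$.

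The base cases are immediate. If $x=\pi x'$ with $\pi\in\Pi$ then $T_\pi$ is an invertible monomial, and since $\Pi$ acts on alcoves and on $\cF$ preserving types (hence weights), this case reduces to $x'\in W$; and if $\ell(x)\le 1$ one simply reads off the multiplication rule — for $x=s$ with $\ell(sy)>\ell(y)$, $f_{s,y,z}=\delta_{z,sy}$ and $H_{s,y}=\emptyset$, while for $\ell(sy)<\ell(y)$, $f_{s,y,sy}=1$, $f_{s,y,y}=q^{L(s)}-q^{-L(s)}$ and $H_{s,y}$ is the single hyperplane separating $yA_0$ from $syA_0$, of weight $L(s)$. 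For the inductive step write $x=s\add x'$ (with $s\in S$) and expand
$$T_xT_y=T_s\Bigl(\sum_{z'}f_{x',y,z'}T_{z'}\Bigr)=\sum_{z'}f_{x',y,z'}\,T_sT_{z'},$$
using $T_sT_{z'}=T_{sz'}$ if $\ell(sz')>\ell(z')$ and $T_sT_{z'}=T_{sz'}+(q^{L(s)}-q^{-L(s)})T_{z'}$ otherwise. Passing from $x'$ to $x$ replaces $H(yA_0,x'yA_0)$ by its symmetric difference with the single hyperplane $H_1$ supporting the face of type $s$ of $x'yA_0$ (of weight $L(s)$), so the passage from $H_{x',y}$ to $H_{x,y}$ is completely controlled by whether $H_1$ lies in $H(A_0,yA_0)$ and in $H(yA_0,x'yA_0)$.

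The obvious estimate coming out of this expansion is $\deg f_{x,y,z}\le c_{x',y}+L(s)$, and this is \emph{not} in general $\le c_{x,y}$: the quantity $c_{x,y}$ records, in each root direction $\al$, only the \emph{maximum} of the weights $L_H$ over $H\in H_{x,y}\cap\cF_{\al}$, not their sum, and moreover $c_{x,y}$ may be strictly smaller than $c_{x',y}$. To get around this I would strengthen the inductive hypothesis, carrying along for each $z$ with $f_{x,y,z}\neq0$ the additional information that $H(zA_0,xyA_0)\subseteq H_{x,y}$ and that
$$\deg f_{x,y,z}\ \le\ \sum_{\al\in\Phi^+}\max\Bigl(\{0\}\cup\{L_H\mid H\in\cF_{\al}\cap H(zA_0,xyA_0)\}\Bigr),$$
a sum which is $\le c_{x,y}$ by the very definition of $c_{x,y}$. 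One then checks that this refined statement survives the $T_s$-step: when $\ell(sz')>\ell(z')$ one relates $H(sz'A_0,xyA_0)$ to $H(z'A_0,x'yA_0)$ through two symmetric differences (one for $z'A_0\to sz'A_0$, one for $x'yA_0\to xyA_0$), and when $\ell(sz')<\ell(z')$ — the only subcase capable of raising a top degree — the hyperplane that gets added to $H(z'A_0,xyA_0)$ is $H_1$ itself, which in that situation already belongs to $H_{x,y}$.

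The crux, and the step I expect to be the main obstacle, is exactly this last verification: one must show that whenever multiplication by $T_s$ genuinely raises a top degree by $L(s)$, the set $H_{x,y}$ does contain (or acquire) a hyperplane of the relevant direction of weight at least $L(s)$ — equivalently, that the configurations of $A_0,\,yA_0,\,x'yA_0,\,z'A_0$ and $H_1$ that would violate the bound $c_{x,y}$ simply cannot arise. This is where standard descent-set combinatorics in Coxeter groups is used (for instance: if $s$ and $t$ are both left descents of $y$, then $sty$ is shorter than $ty$), which excludes the offending configurations, leaving a finite — if somewhat tedious — case analysis organised by the relative positions of $A_0$, $yA_0$, $x'yA_0$ and $z'A_0$ with respect to $H_1$. (By the identity $f_{x,y,z}=f_{y^{-1},x^{-1},z^{-1}}$ coming from the antiautomorphism $\flat$, one also obtains at once the right-handed bound $\deg f_{x,y,z}\le c_{y^{-1},x^{-1}}$.)
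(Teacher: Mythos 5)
First, a point of reference: the paper does not prove this theorem at all; it quotes it from \cite[Theorem 2.4]{jeju2}, where the argument is built on the explicit expansion $T_xT_y=\sum_{I\in\fI_{x,y}}\xi_{s_{i_1}}\cdots\xi_{s_{i_p}}T_{x_Iy}$ (recalled in Section 5.1 of the present paper, after \cite{bremke}) together with a geometric analysis of the hyperplanes crossed backwards along the gallery attached to a reduced expression of $x$ --- in particular one must show that a single term cannot accumulate two $\xi$-factors attached to \emph{parallel} hyperplanes, since $c_{x,y}$ only records one (maximal) weight per root direction. Your induction on $\ell(x)$ is in the same spirit, and you correctly identify that the naive estimate $c_{x',y}+L(s)$ is useless; but your proposal has a genuine gap, and it sits exactly where you lean on the strengthened hypothesis. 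The auxiliary claim $H(zA_0,xyA_0)\subseteq H_{x,y}$ is false. Take $W$ of type $\tA_1$, identify $V$ with $\nR$, alcoves with unit intervals (so $A_0=(0,1)$) and hyperplanes with integers, even integers carrying weight $L(s_1)$ and odd ones $L(s_0)$. For $x=s_0s_1$ and $y=s_1s_0s_1$ one computes $xy=s_1$ and
$$T_xT_y=T_{s_1}+\xi_{s_0}T_{s_0s_1}+\xi_{s_1}T_{s_0s_1s_0s_1},\qquad \xi_s=q^{L(s)}-q^{-L(s)}.$$
For $z=s_0s_1s_0s_1$ we have $zA_0=(-4,-3)$ and $xyA_0=(-1,0)$, so $H(zA_0,xyA_0)=\{-3,-2,-1\}$, whereas $H_{x,y}=\{0,-1,-2\}\cap\{-2,-1\}=\{-1,-2\}$; the hyperplane $-3$ violates the containment. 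Worse, the second half of your hypothesis, even where true, does not give the theorem: for $x=s_0s_1$, $y=s_1$ in type $\tC_1$ with $L(s_0)=a>L(s_1)=c$, the term $z=s_0s_1$ has $H(zA_0,xyA_0)=\{-1,0,1\}$, so your per-$z$ quantity equals $a$, while $H_{x,y}=\{0\}$ and $c_{x,y}=c<a$. Thus the step ``a sum which is $\le c_{x,y}$ by the very definition of $c_{x,y}$'' fails, and the proposed invariant cannot close the induction.

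Beyond this, the decisive verification --- that whenever a $T_s$-step raises a top degree the bookkeeping per root direction still works out --- is precisely the content of the theorem, and your sketch defers it entirely to ``standard descent-set combinatorics'' and ``a finite --- if somewhat tedious --- case analysis'' that is never carried out. Nothing in the proposal engages with the real obstruction, namely ruling out two degree-raising crossings of parallel hyperplanes within one term (in the equal-parameter case the theorem says exactly that the number of $\xi$-factors in any term is at most the number of \emph{directions} in $I_{x,y}$). You would also need to handle the ascent subcase $\ell(sz')>\ell(z')$, where $H(z'A_0,x'yA_0)$ is modified by a symmetric difference with two possibly distinct hyperplanes and your per-$z$ upper bound can strictly decrease while the inherited degree does not; this is not addressed. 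I would recommend abandoning the per-$z$ invariant and working instead with the expansion over $\fI_{x,y}$ quoted in Section 5.1, proving directly that for each admissible $I$ the hyperplanes crossed at the positions of $I$ lie in $H_{x,y}$ and are pairwise non-parallel.
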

\begin{Prop}
\label{mod-H0}
Let $\tau\in P^+$, $y\in X_0^{-1}$, $x\in X_0$ and $v\in W_0$.  The following holds
\begin{enumerate}
\item $c_{x,vy}\leq L(w_0)-L(v)$ for all $v\in W_0$;  if $x\in \bB_0$ then $c_{x,vy}< L(w_0)-L(v)$\smallskip
\item $T_{x}C_{w_{0}y}\equiv \sum_{v\in W_{0}} q^{L(v)-L(w_{0})}T_{x}T_{vy}\mod \cH_{<0}$;
\item $T_xC_{w_0y}\in \cH_{\leq 0}$.\smallskip
\end{enumerate}
\end{Prop}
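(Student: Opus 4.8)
The plan is to establish (1) first, since it carries all the geometric content, and then to deduce (2) and, as an immediate consequence, (3).

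\emph{Part (1).} Since $y\in X_0^{-1}$, one has $vy=v\add y$ for every $v\in W_0$, so the hyperplanes separating $A_0$ from $vyA_0$ decompose as $H(A_0,yA_0)\sqcup H(yA_0,vyA_0)$, with $|H(yA_0,vyA_0)|=\ell(v)$. The set $H_{x,vy}$ lies inside $H(A_0,vyA_0)$ and consists of those hyperplanes that, in addition, separate $vyA_0$ from $xvyA_0$; I would bound $c_{x,vy}=\sum_{\al\in I_{x,vy}}c_{x,vy}(\al)$ against the ``budget'' $L(w_0)=\sum_{\be\in\Phi^+}L_{H_{\be,0}}$, arguing that the $\ell(v)$ crossings recorded in $H(yA_0,vyA_0)$ consume weight $L(v)$ that is no longer available, and that the defining property of $X_0$ confines the directions occurring in $I_{x,vy}$ so that the sum of their maximal weights is at most $L(w_0)-L(v)$. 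This is exactly the type of hyperplane bookkeeping carried out in \cite{jeju2}, which I would transport to the present extended, possibly unequal-parameter, situation. For the strict inequality one exploits the tighter inequalities defining $\bB_0$, which leave a genuine surplus in the estimate when $v<w_0$; for $v=w_0$ one simply has $H_{x,w_0y}=\emptyset$, whence $c_{x,w_0y}=0=L(w_0)-L(w_0)$, so the strict claim is only meaningful for $v<w_0$.

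\emph{Part (2).} The structural input is that $s\,w_0y<w_0y$ for all $s\in S_0$, hence $T_sC_{w_0y}=q^{L(s)}C_{w_0y}$, so $C_{w_0y}$ lies in $E:=\{h\in\cH\mid T_sh=q^{L(s)}h\text{ for all }s\in S_0\}$. Solving, coefficient by coefficient, the linear relations $T_sh=q^{L(s)}h$ shows that $E$ has $\cA$-basis $\{C_{w_0}T_d\mid d\in X_0^{-1}\}$, where $C_{w_0}T_d=\sum_{v\in W_0}q^{L(v)-L(w_0)}T_{vd}$. Writing $C_{w_0y}=\sum_{d\in X_0^{-1}}\al_d\,C_{w_0}T_d$ and comparing, for each left coset $W_0d$, the coefficient of its longest element $T_{w_0d}$, one gets $\al_y=1$ and $\al_d=P_{w_0d,w_0y}\in\cA_{<0}$ for $d\neq y$; thus $C_{w_0y}=C_{w_0}T_y+R$ with $R=\sum_{d\neq y}\al_d\,C_{w_0}T_d$. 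Now, for every $d\in X_0^{-1}$, part (1) applied to $d$ together with Theorem~\ref{bound first} gives $\deg\!\big(q^{L(v)-L(w_0)}f_{x,vd,z}\big)\leq L(v)-L(w_0)+c_{x,vd}\leq 0$, so $T_xC_{w_0}T_d=\sum_{v\in W_0}q^{L(v)-L(w_0)}T_xT_{vd}\in\cH_{\leq 0}$; since $\al_d\in\cA_{<0}$ for $d\neq y$, it follows that $T_xR=\sum_{d\neq y}\al_d\,T_xC_{w_0}T_d\in\cH_{<0}$. Therefore $T_xC_{w_0y}=T_xC_{w_0}T_y+T_xR\equiv\sum_{v\in W_0}q^{L(v)-L(w_0)}T_xT_{vy}\mod\cH_{<0}$, which is (2).

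\emph{Part (3) and the main obstacle.} Part (3) is then immediate: by part (1) the element $\sum_{v\in W_0}q^{L(v)-L(w_0)}T_xT_{vy}$ lies in $\cH_{\leq 0}$ (the degree estimate above with $d=y$), and by part (2) it is congruent to $T_xC_{w_0y}$ modulo $\cH_{<0}\subseteq\cH_{\leq 0}$, so $T_xC_{w_0y}\in\cH_{\leq 0}$. The only genuinely difficult step is Part (1): Parts (2) and (3) are formal once the inequality $c_{x,vy}\leq L(w_0)-L(v)$ is in hand, whereas proving that inequality — and its strict form for $x\in\bB_0$ — requires a careful hyperplane count, of the kind developed in \cite{jeju2}, of exactly which hyperplanes separating $A_0$ from $vyA_0$ can also separate $vyA_0$ from $xvyA_0$.
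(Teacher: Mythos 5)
Your Parts (2) and (3) are sound and amount to the same computation as the paper's: the paper expands $C_{w_0y}=\sum_z P_{z,w_0y}T_z$ and invokes the factorization $P_{vy',w_0y}=q^{L(v)-L(w_0)}P_{w_0y',w_0y}$, whereas you package the same fact as the decomposition $C_{w_0y}=\sum_{d\in X_0^{-1}}P_{w_0d,w_0y}\,C_{w_0}T_d$ in the basis $\{C_{w_0}T_d\}$ of the $q^{L(s)}$-eigenspace; the two are equivalent and both reduce (2) and (3) to the degree bound in (1). Your remark that the strict inequality is vacuous (indeed false as literally stated) for $v=w_0$, since then $H_{x,w_0y}=\emptyset$ and $c_{x,w_0y}=0=L(w_0)-L(w_0)$, is a fair observation about an edge case the paper's own proof also glosses over.

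The genuine gap is Part (1), which you explicitly defer: you describe it as ``hyperplane bookkeeping'' to be transported from the rank-one-coset analysis of the cited reference, and your heuristic that the $\ell(v)$ crossings between $yA_0$ and $vyA_0$ ``consume weight $L(v)$ that is no longer available'' is not the actual mechanism and does not yield a proof. What is really needed, and what the paper supplies, is a statement about \emph{directions}, not a weight budget: writing $w_0=u\add v$ and letting $\al_{i_1},\ldots,\al_{i_m}$ be the directions of the hyperplanes crossed between $yA_0$ and $vyA_0$, one takes the $L$-weight $\tau$ in the intersection of all the hyperplanes crossed along the gallery from $yA_0$ to $w_0yA_0$ and uses $x\in X_0$ to place $xvyA_0$ in the quarter with vertex $\tau$ containing $vyA_0$; a short computation with the pairings $\sg\mu,\al_{i_r}^\vee\sd$ then shows that for each such direction $\al_{i_r}$ no hyperplane can lie simultaneously in $H(A_0,vyA_0)$ and in $H(vyA_0,xvyA_0)$, so $I_{x,vy}\subseteq\{\al_{i_{m+1}},\ldots,\al_{i_n}\}$ and $c_{x,vy}\le L(u)=L(w_0)-L(v)$. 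The strict inequality for $x\in\bB_0$ requires a further case analysis (the direction $\al_{i_{m+1}}$ is excluded outright except in type $C$ with $L(s_0)>L(s_n)$, where the relevant hyperplane fails to be of maximal weight). None of this argument appears in your proposal, and since (2) and (3) are, as you say yourself, formal consequences of (1), the proposal as written does not prove the proposition.
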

\begin{proof}
Fix $v\in W_0$. Let $u=w_0v^{-1}$ so that  $w_0=u\add v$. Write $w_0=s_{i_n}\ldots s_{i_{m+1}}s_{i_m}\ldots s_{i_1}$ where $u=s_{i_n}\ldots s_{i_{m+1}}$ and $v=s_{i_m}\ldots s_{i_1}$ are reduced expression. Let $H_{i_n},\ldots,H_{i_{m+1}}$ (respectively $H_{i_m},\ldots,H_{i_1}$) with direction $\al_{i_n},\ldots,\al_{i_{m+1}}$ (respectively $\al_{i_m},\ldots,\al_{i_1}$)  be the set of hyperplanes separating $vyA_0$ and $w_0yA_0$  (respectively $yA_0$ and $vyA_0$). Let $\tau$ be the unique $L$-weight  lying in the intersection of all the $H_{i_k}$. 
Then since $x\in X_0$, we see that $xvyA_{0}$ lies in the quarter $\mathcal{C}$ with vertex $\tau$ which contains $vyA_{0}$.

\medskip
Let $1\leq r\leq m$ and let $k\in\mathbb{Z}$ such that $H_{i_{r}}=H_{\alpha_{i_r},k}$. We will assume that $k>0$, the case $k\leq 0$ being similar. 
We have $vyA_{0}\in V_{H_{i_r}}^{+}$. Now, since $\tau$ lies in the closure of $vp_{\tau}A_{0}$ and $\tau\in H_{i_r}$, one can see that
$$k<\sg\mu,\check{\alpha_{i_r}}\sd<k+1 \text{ for all $\mu\in vyA_{0}$}.$$
Moreover, $xvyA_{0}\in \mathcal{C}$ implies that
$$k<\sg \mu,\check{\alpha_{i_r}}\sd \text{ for all $\mu\in xvyA_{0}$}.$$
From there, we conclude that all the hyperplanes $H_{\alpha_{i_r},l}$ with $l\leq k$ do not lie in $H(vyA_{0},xvyA_{0})$ and that all the hyperplanes $H_{\alpha_{i_r},l}$ with $l>k$ do not lie in $H(A_{0},vyA_{0})$. Thus $\al_{i_r}\notin I_{x,vy}$ and we have
$$I_{x,vy}\subset \{ \al_{i_{m+1}},\ldots,\al_{i_n}\},$$
which implies
$$c_{x,vy}\leq \overset{r=n}{\underset{r=m+1}{\sum}} c_{x,vy}(\al_{i_r})=L(w_0)-L(v)$$
as required. In the case where $x\in \bB_0$, we see that
\begin{itemize}
\item if $W$ is not of type $C$ or if $W$ is of type $C$ and $L(t)=L(t')$ then $\al_{i_{m+1}}\notin I_{x,vy}$;
\item if $W$ is of type $C$ and $L(t)>L(t')$, then we may have $\al_{m+1}\in I_{x,vy}$ but in this case the hyperplane of direction $\al_{i_{m+1}}$ that lies in $H_{x,vy}$ cannot be of maximal weight. 
\end{itemize}
In both cases we see that 
$$c_{x,vy}< \overset{r=n}{\underset{r=m+1}{\sum}} c_{x,vy}(\al_{i})=L(w_0)-L(v).$$
Next we have 
\begin{align*}
T_xC_{w_0 y }&=T_x\big(\sum_{z\in W_e} P_{z,w_0 y }T_z\big)\\
&=T_{xw_0 y }+\sum_{z<w_0 y } P_{z,w_0 y }T_xT_{z}.
\end{align*}
Write $z=vy'$ where $y'\in X_0^{-1}$ and $v\in W_0$ (any element of $W_e$ can uniquely be written in this way; \cite{gp}). Then, using the recursive formula for Kazhdan-Lusztig polynomials, we see that 
$$P_{z,w_0 y }=P_{vy',w_0 y }=q^{L(v)-L(w_0)}P_{w_0y',w_0 y }.$$
Since the degree of the polynomials appearing in the expression of $T_xT_z$ is at most $c_{x,z}=c_{x,vy'}$ and $c_{x,vy'}\leq L(w_0)-L(v)$ we see that we must have $P_{z,w_0 y}T_xT_{z}\in \cH_{\leq 0}$. 
More precisely, 
if $w_0y'<w_0 y $, then $P_{w_0y',w_0 y }\in \cA_{<0}$ and  $P_{z,w_0 y }T_xT_{z}\in \cH_{<0}$ in this case. Putting all this together, we get
\begin{align*}
T_xC_{w_0 y }&=T_{xw_0 y }+\sum_{z<w_0 y } P_{z,w_0 y }T_xT_{z}\\
&\equiv T_{xw_0 y }+\sum_{z\in W_0 y } P_{z,w_0 y }T_xT_{z}\mod \cH_{< 0}\\
&\equiv  T_{xw_0 y }+\sum_{v\in W_0} P_{v y ,w_0 y }T_xT_{v y }\mod \cH_{< 0}\\
&\equiv  T_{xw_0 y }+\sum_{v\in W_0} q^{L(v)-L(w_0)}T_xT_{v y }\mod \cH_{<0}\\
\end{align*}
as required in (2). Statement (3) is a direct consequence of (1) and the above relation. 
\end{proof}
\begin{Cor}
\label{all-basis}
For all $z,z'\in\bB_0$ and $\tau\in P^+$ we have 
$$\bP(z)C_{p_{\tau}w_0}\bP_R({z'}^{-1})=C_{zp_\tau w_0{z'}^{-1}}$$
\end{Cor}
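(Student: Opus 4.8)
The plan is to prove Corollary \ref{all-basis} by combining the defining property $\bP(z)C_{w_0y}=C_{zw_0y}$ (and its right-handed analogue) with the key fact, to be established just before this corollary, that $C_{p_\tau w_0}$ itself lies in the subalgebra-like piece $\cM_+=\cM_1\cap\cM_1^\cR$ and is obtained from $C_{w_0}$ by multiplying by the $\bP$-elements. Concretely, first I would recall from Lemma \ref{basis-tau}(1) and the surrounding discussion that $C_{p_\tau w_0}=\bP(\tau)C_{w_0}+\sum_{z<p_\tau w_0}\cA\,C_z$, but more usefully that the element $C_{p_\tau w_0}$ can be written as $C_{w_0}\bP_R(-\tau')$ for an appropriate positive weight $\tau'$ --- indeed, since $C_{p_\tau w_0}\in\cM_1^\cR$, there is an expression $C_{p_\tau w_0}=\sum_x \sp^r_{x,w_0}(\ldots)$, and by the right-handed version of Theorem \ref{Gind} we have $C_{p_\tau w_0}=C_{w_0}\,\big(T_{p'_\tau}+\sum \sp^r T_{x'}\big)$ where the relevant reduced piece is $p_\tau w_0=w_0 p^{-1}_{\nu(\tau)}$.

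The cleaner route, and the one I would actually carry out, avoids dealing with $C_{p_\tau w_0}$ directly and instead uses associativity together with the two defining identities. First, write $C_{p_\tau w_0}$ in the form that exhibits it on the left as coming from $w_0$: since $p_\tau w_0=w_0 p^{-1}_{\nu(\tau)}$ and $w_0\in X_0^{-1}$, the right-handed Theorem \ref{Gind} and the subsequent theorem give $C_{p_\tau w_0}=C_{w_0}\bP_R(-\nu(\tau))$, hence also $C_{p_\tau w_0}=C_{w_0 p^{-1}_{\nu(\tau)}}$. Now I would compute
$$
\bP(z)C_{p_\tau w_0}\bP_R({z'}^{-1})
=\bP(z)\,C_{w_0}\bP_R(-\nu(\tau))\,\bP_R({z'}^{-1}).
$$
By the defining property $\bP(z)C_{w_0 y}=C_{zw_0 y}$ applied with $y=1\in\bB_0^{-1}$ (note $1\in\bB_0$, so $1\in X_0^{-1}$ in the relevant sense), we get $\bP(z)C_{w_0}=C_{zw_0}$. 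Then, by the right-handed defining property $C_{zw_0}\bP_R(y)=C_{zw_0 y}$ for $y\in\bB_0^{-1}$ together with the definition of $\bP_R(-\tau)$ as a product of $\bP_R(-\om_i)$'s, I would peel off the factors one at a time: $C_{zw_0}\bP_R(-\nu(\tau))=C_{zw_0 p^{-1}_{\nu(\tau)}}=C_{z p_\tau w_0}$ (using $w_0 p^{-1}_{\nu(\tau)}=p_\tau w_0$ again and the fact that $z p_\tau w_0=z\add p_\tau\add w_0$ lies in $c_0$ with $p_\tau w_0\in X_0^{-1}\cdot w_0$-type position), and finally $C_{zp_\tau w_0}\bP_R({z'}^{-1})=C_{zp_\tau w_0 {z'}^{-1}}$ by the right-handed version of the last theorem of Section \ref{relative-KL} applied to $y=z\in\bB_0$.

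The one point requiring care --- and the main obstacle --- is making sure that at each stage the element to which we apply a $\bP$ or $\bP_R$ identity is genuinely of the form $C_{w_0 y}$ (resp.\ $C_{yw_0}$) with $y$ in the correct set $\bB_0^{-1}$ (resp.\ $\bB_0$), so that the relevant theorem applies and $\tilde C=C$. This is where Theorem \ref{lowest-exp} enters: it guarantees that $z p_\tau w_0 {z'}^{-1}=z\add p_\tau\add w_0\add {z'}^{-1}$ with $z,z'\in\bB_0$ and $\tau\in P^+$, so every intermediate element $C_{zw_0}$, $C_{zp_\tau w_0}$ sits in $c_0$ with the factorization needed for the left/right relative Kazhdan--Lusztig machinery. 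I would spell out that $\bP_R(-\tau)=\bP_R(-\om_k)\cdots\bP_R(-\om_1)$ acting on $C_{zw_0}$ produces $C_{zp_\tau w_0}$ by an easy induction on the number of fundamental summands, each step being an instance of $C_{z'' w_0}\bP_R(-\om)=C_{z'' w_0 p_\om^{-1}}$ valid because $z'' w_0 p_\om^{-1}$ still lies in $c_0$ and $z''$ still ranges over the appropriate set (here one uses that multiplying $p_\tau w_0$ on the left by $z\in\bB_0$ keeps us inside $N_{z'}$-type components, as described after Theorem \ref{lowest-exp}). Once the bookkeeping is set up, the computation is a two-line chain of equalities, and $\cA$-linearity is not even needed since both sides are single basis elements.
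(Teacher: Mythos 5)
Your argument breaks down at its very first substantive step, the identity $C_{p_\tau w_0}=C_{w_0}\bP_R(-\nu(\tau))$. This is false for non-fundamental $\tau$: by Proposition 4.5 one has $C_{w_0}\bP_R(-\nu(\tau))=\bP(\tau)C_{w_0}$, and the whole point of Section 5 is that $\bP(\tau)C_{w_0}$ is \emph{not} equal to $C_{p_\tau w_0}$ but to a sum $\sum_{\tau'}m_{\tau'}C_{p_{\tau'}w_0}$ with several nonzero terms in general (see the Remark following Definition 4.6, which warns explicitly that $\bP(\tau)$ was \emph{not} defined as $\sum_x \sp_{x,p_\tau}T_x$, and the type $\tA_2$ computation $\bP(2\om_1+2\om_2)C_{w_0}=C_{p^2_{\om_1}p^2_{\om_2}w_0}+C_{p^3_{\om_1}w_0}+C_{p^3_{\om_2}w_0}+4C_{p_{\om_1}p_{\om_2}w_0}+2C_{w_0}$). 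The "peeling off one fundamental weight at a time" induction fails for the same reason and is moreover circular: the defining property $C_{z''w_0}\bP_R(-\om)=C_{z''w_0p_\om^{-1}}$ is only available for $z''\in\bB_0$, and after the first step the relevant left factor $zp_{\nu^{-1}(\om_1)}$ is no longer in $\bB_0$, so each subsequent step is an instance of (the right-handed form of) the very corollary you are trying to prove. Appealing to Theorem \ref{lowest-exp} does not repair this: length-additivity of $z\add p_\tau\add w_0\add z'^{-1}$ says nothing about whether $\tC$ equals $C$ for these products.

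The genuine content of the corollary, which your proposal never addresses, is that $\bP(z)C_{w_0y'}=C_{zw_0y'}$ continues to hold when $y'=p_{\nu(\tau)}^{-1}y$ ranges over all of $X_0^{-1}$ rather than just $\bB_0^{-1}$, provided $z\in\bB_0$. The paper proves this by a degree argument: writing $\bP(z)C_{w_0y'}=T_zC_{w_0y'}+\sum_{x<z}\sp_{x,z}T_xC_{w_0y'}$, one uses $T_xC_{w_0y'}\in\cH_{\leq 0}$ together with $\sp_{x,z}\in\cA_{<0}$ to kill the lower terms modulo $\cH_{<0}$, and the \emph{strict} bound $c_{z,vy'}<L(w_0)-L(v)$ of Proposition 5.3(1) --- valid precisely because $z\in\bB_0$ and not merely $z\in X_0$ --- to get $T_zC_{w_0y'}\equiv T_{zw_0y'}\bmod\cH_{<0}$; bar-invariance and Lemma 3.2 then force $\bP(z)C_{w_0y'}=C_{zw_0y'}$, and the right-handed statement follows by applying $\flat$. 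Without some substitute for this estimate your proof cannot be completed.
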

\begin{proof}
Let $z\in\bB_0$, $\tau \in P^+$ and $y\in X_0^{-1}$. We have
\begin{align*}
\bP(z)C_{p_{\tau}w_0y}&=\sum_{x\in X_0,x\leq z} {\sf p}_{x,z}T_xC_{w_0p_{\nu(\tau)}^{-1}y}\\
&=T_{z}C_{w_0p_{\nu(\tau)}^{-1}y}+\sum_{x<z,x\in X_0} {\sf p}_{x,z}T_xC_{w_0p_{\nu(\tau)}^{-1}y}\\
&\equiv T_{zp_{\tau}w_0y}\mod \cH_{<0}
\end{align*}
by the proposition above and since $T_xC_{p_{\tau}w_0y}\in \cH_{\leq 0}$ and ${\sf p}_{x,z}\in \cA_{<0}$. Further, by definition, we know that $\bP(z)C_{p_{\tau}w_0y}$ is stable under the $\bar{\ }$ -involution, hence we get $\bP(z)C_{p_{\tau}w_0y}=C_{zp_\tau w_0y}$. In particular we have 
$$P(z)C_{p_{\tau}w_0}=C_{zp_\tau w_0}\quand C_{p_{\tau}w_0}\bP_{R}(z^{-1})=C_{p_{\tau}w_0z^{-1}}$$\\
 the last equality being easily obtained using the $\flat$-involution. Putting all this together, we get 
\begin{align*}
\bP(z)C_{p_{\tau}w_0}\bP({z'}^{-1})&=\bP(z)C_{p_{\tau}w_0{z'}^{-1}}=C_{zp_{\tau}w_0{z'}^{-1}}
\end{align*}
as required. 
\end{proof}
Let $x,w\in W$  and $x=\pi_x s_{N}\ldots s_{1}$ be a reduced expression of $x$. We denote $\fI_{x,w}$ the collection of all subsets $I=\{i_{1},\ldots,i_{p}\}$ such that $1\leq i_{1}<\ldots<i_{p}\leq N$ and
$$s_{i_{t}}\ldots \hs_{i_{t-1}}\ldots \hs_{i_{1}}\ldots s_{1}w<\hs_{i_{t}}\ldots \hs_{i_{t-1}}\ldots \hs_{i_{1}}\ldots s_{1}w.$$
For $I=\{i_{1},\ldots,i_{p}\}\in \fI_{x,w}$ we set $x_{I}=s_{N}\ldots \hs_{i_{p}}\ldots \hs_{i_{1}}\ldots s_{1}$. \\
Then we have \cite[Proof of Proposition 5.1]{bremke}
\begin{equation}
\label{explicit-standard}
T_{x}T_{y}=\sum_{I\in\fI_{x,y}} \xi_{s_{i_1}}\ldots \xi_{s_{i_p}}T_{x_{I}y}.
\end{equation}
Note that by definition we have $x_I\leq x$ in the Bruhat order, therefore 
$$T_{x}T_{y}=\sum_{z\leq x} a_zT_{zy}\text{ where $a_z\in \cA$}.$$
We will write 
$$T_{x}T_{y}\approx T_{x}T_{y'}$$
if and only if 
$$T_{x}T_{y}=\sum_{z\leq x} a_zT_{zy}\quand  T_{x}T_{y'}=\sum_{z\leq x} a_zT_{zy'}.$$

\medskip

Let $\al\in \Phi^+$ and $\om\in \bP^{+}$. We define the following integer:
$$m_\al(\om):=\underset{v\in W_0}{\max_{x\in [1,p_\om]}}\Big|\big(xvA_0\big)_\al \Big| $$
where $[1,p_\om]$ denotes the Bruhat interval from the identity to $p_\om$ and $\big(xvA_0\big)_\al$ is the largest integer $k$ in absolute value such that $H_{\al,k}$ separates $A_0$ and $xvA_0$. It is a straightforward consequence of this definition that if $H_{\al,k}\in H(A_0,xvA_0)$ for some $x\in [1,p_{\om}]$ then $|k|\leq m_\al(\om)$. We will then say that $\la\in P^-$ is $\om$-far away from the $\al$-wall if $\la_\al\leq -m_\al$. For $\la\in P$ and $\al\in \Phi$ we set $\la_\al=\sg \la,\al^\vee\sd$ and $A_\la:=A_0t_\la$.

\begin{Lem}
Let $\la\in P^-$, $v\in W_0$ and $x\in [1,p_\om]$. Then if $H_{\al,k}\in H(xvA_\la,vA_\la)$ we have 
$$|k-\la_\al|\leq m_\al(\om).$$ 
\end{Lem}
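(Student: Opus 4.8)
The idea is to translate everything back to the origin by $t_\la$ and then use that walls separating two alcoves of the $W_0$-orbit of $A_0$ must pass through $0$. Since the left action of $W$ and the right action of $W_a$ on $\alc(\cF)$ commute and $A_\la=A_0t_\la$, we have $vA_\la=(vA_0)t_\la$ and $xvA_\la=(xv)(A_0t_\la)=(xvA_0)t_\la$. The translation $t_\la$ is an isometry of $V$, hence preserves the relation ``separates'', and it carries the hyperplane $H_{\al,j}$ onto $H_{\al,\,j+\la_\al}$ because $\sg\mu+\la,\al^\vee\sd=\sg\mu,\al^\vee\sd+\la_\al$. Therefore
$$H_{\al,k}\in H(xvA_\la,vA_\la)\ \Longleftrightarrow\ H_{\al,\,k-\la_\al}\in H(xvA_0,vA_0),$$
so, writing $j=k-\la_\al$, it is enough to show that $H_{\al,j}\in H(xvA_0,vA_0)$ forces $|j|\le m_\al(\om)$.

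Next I would invoke the elementary ``cocycle'' identity for walls: for any three alcoves one has $H(B,C)=H(B,A_0)\,\triangle\,H(A_0,C)$ (symmetric difference), valid because no alcove meets a hyperplane of $\cF$. Applying it with $B=xvA_0$ and $C=vA_0$ gives
$$H(xvA_0,vA_0)=H(A_0,xvA_0)\,\triangle\,H(A_0,vA_0),$$
so $H_{\al,j}$ lies in exactly one of the two sets on the right, and we split into two cases.

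If $H_{\al,j}\in H(A_0,xvA_0)$, then since $x\in[1,p_\om]$ and $v\in W_0$ the pair $(x,v)$ occurs in the definition of $m_\al(\om)$, whence $|j|\le\big|(xvA_0)_\al\big|\le m_\al(\om)$ and we are done. If instead $H_{\al,j}\in H(A_0,vA_0)$ with $v\in W_0$, I claim $H_{\al,j}$ passes through $0$, so that $j=0\le m_\al(\om)$. Indeed both $A_0$ and $vA_0$ contain $0$ in their closure ($0\in\overline{A_0}$, and $W_0$ is by definition the subgroup of $W$ stabilising the set of alcoves having $0$ in their closure), and a hyperplane separating two alcoves that share a boundary point must contain that point: if $0\notin H_{\al,j}$ then $0$ lies in one open half-space, say the one containing $A_0$, but then $\overline{vA_0}$ lies in the closed opposite half-space, contradicting $0\in\overline{vA_0}$. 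In both cases $|k-\la_\al|=|j|\le m_\al(\om)$, which is the assertion.

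I do not expect a genuine obstacle here: the argument is entirely combinatorial. The only points needing care are bookkeeping ones --- pinning down the index shift $j=k-\la_\al$ with the correct sign from the chosen action conventions, and noting that a possible $\Pi$-component of $x\in[1,p_\om]$ is harmless, since $\Pi$ stabilises $A_0$ and so does not affect which hyperplanes of $\cF$ separate $A_0$ from $xvA_0$.
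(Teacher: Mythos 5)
Your proof is correct and follows essentially the same route as the paper: translate by $-\la$ to reduce to $H_{\al,k-\la_\al}\in H(xvA_0,vA_0)$, then split according to whether this hyperplane separates $A_0$ from $xvA_0$ (where the definition of $m_\al(\om)$ applies directly) or $A_0$ from $vA_0$ (where it must pass through $0$, forcing $k-\la_\al=0$). You merely make explicit the symmetric-difference identity that the paper uses implicitly in this case split.
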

\begin{proof}
Translating the relation  $H_{\al,k}\in  H(xvA_\la,vA_\la)$ by $-\la$  we get that $H_{\al,k-\la_\al}\in H(xvA_0,vA_0)$. 
If $H_{\al,k-\la_\al}\in H(xvA_0,A_0)$ then by definition of $m_\al(\om)$ we get that $|k-\la_\al|\leq m_\al(\om)$ as required. If $H_{\al,k-\la_\al}\notin H(xvA_0,A_0)$ then $H_{\al,k-\la_\al}$ must separate $A_0$ and $vA_0$. Therefore $k-\la_\al=0$ and the result is obvious. 
%
\end{proof}
\medskip

The end of this section is devoted to the study of product of the form $T_{p_{\om}}T_{vp_{\la}}$ where $\om\in \bP^{+}$, $\la\in P^{-}$ and $v\in W_0$. 
 \begin{Prop}
\label{T-alpha-wall}
Let $\la,\la'\in P^-$ and $\Phi^+=\Phi^+_c\bigsqcup \Phi^+_f$ be a disjoint union such that 
\begin{enumerate}
\item $\la$ and $\la'$ are $\om$-far away from the $\al$-wall for all $\al\in \Phi^+_f$,
\item $\la_\al=\la'_{\al}$ for all $\al\in \Phi^+_c$.
\end{enumerate}
Then $T_{p_{\om}}T_{vp_{\la}}\approx T_{p_{\om}}T_{vp_{\la'}}$ for all $v\in W_0$. 
\end{Prop}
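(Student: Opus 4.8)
The plan is to reduce the assertion to a combinatorial equality between the index sets $\fI_{p_\om,vp_\la}$ and $\fI_{p_\om,vp_{\la'}}$ occurring in \eqref{explicit-standard}, and then to settle that equality by a short geometric argument whose only non-formal input is the Lemma immediately preceding this Proposition. First I would unwind the relation $\approx$. By \eqref{explicit-standard} one has
\[
T_{p_\om}T_{vp_\la}=\sum_{I\in\fI_{p_\om,vp_\la}}\xi_{s_{i_1}}\cdots\xi_{s_{i_p}}\,T_{(p_\om)_I vp_\la},
\]
and here both the scalar $\xi_{s_{i_1}}\cdots\xi_{s_{i_p}}$ and the element $(p_\om)_I\leq p_\om$ depend only on the chosen reduced expression of $p_\om$ and on $I$, not on $\la$. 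Grouping the terms according to the value of $(p_\om)_I$, the assertion $T_{p_\om}T_{vp_\la}\approx T_{p_\om}T_{vp_{\la'}}$ holds as soon as $\fI_{p_\om,vp_\la}=\fI_{p_\om,vp_{\la'}}$. Now by the definition of $\fI_{x,w}$ a subset $I=\{i_1<\cdots<i_p\}$ lies in $\fI_{p_\om,vp_\la}$ precisely when, for each $t$, the inequality $s_{i_t}\,u_t\,vp_\la<u_t\,vp_\la$ holds, where $u_t$ is the corresponding subword of the reduced expression (as around \eqref{explicit-standard}), and every such $u_t$ is a subexpression of $p_\om$, hence $u_t\leq p_\om$. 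It therefore suffices to prove that for every $g\leq p_\om$, every $s\in S$ and the given $v$, one has $s\,g\,vp_\la<g\,vp_\la$ if and only if $s\,g\,vp_{\la'}<g\,vp_{\la'}$.

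To prove this I would pass to alcove geometry. The inequality $s\,g\,vp_\la<g\,vp_\la$ holds if and only if the wall of type $s$ of the alcove $g\,vp_\la A_0$ separates $A_0$ from it. Since ``lying on which side of a fixed hyperplane'' is a $\nZ/2\nZ$-valued cocycle on alcoves, a hyperplane $H$ separates $A_0$ from $g\,vp_\la A_0$ exactly when it separates precisely one of the two pairs $(A_0,vp_\la A_0)$ and $(vp_\la A_0,g\,vp_\la A_0)$. The second (``near'') pair is governed by the preceding Lemma: applied with $x=g\in[1,p_\om]$, it says that every $H_{\al,k}\in H(vp_\la A_0,g\,vp_\la A_0)$ satisfies $|k-\la_\al|\leq m_\al(\om)$, so that the walls of $g\,vp_\la A_0$ — reached from those of $vp_\la A_0$ by crossing only such hyperplanes — have $\al$-coordinate within $m_\al(\om)+1$ of $\la_\al$. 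The first (``far'') pair is explicit: since $vp_\la A_0=v(A_0)+\la$ is the translate by $\la$ of the origin-touching alcove $v(A_0)$, a hyperplane $H_{\al,k}$ (for $\al\in\Phi^+$) separates $A_0$ from $vp_\la A_0$ precisely when $k$ lies, in the semi-open sense, between $0$ and $\la_\al-\eps_v(\al)$, with $\eps_v(\al)\in\{0,1\}$ equal to $1$ exactly when $v^{-1}\al\in\Phi^-$.

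Now I would compare the $\la$- and $\la'$-pictures. Assume first $\la-\la'\in Q$; the general case is handled identically inside $\alc_e(\cF)$, the factor of $\Pi$ merely permuting the types by a diagram automorphism and not changing which ordinary hyperplanes separate which ordinary alcoves. Then $g\,vp_{\la'}A_0$ is the translate of $g\,vp_\la A_0$ by $\la'-\la$, the walls of a given type correspond under this translation, and $H(vp_{\la'}A_0,g\,vp_{\la'}A_0)$ is the translate of $H(vp_\la A_0,g\,vp_\la A_0)$; hence in the cocycle decomposition the ``near'' term is literally the same for $\la$ and for $\la'$, and the comparison collapses to the ``far'' term. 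For $\al\in\Phi^+_c$ this term is unchanged because $\la_\al=\la'_\al$. For $\al\in\Phi^+_f$ the relevant wall $H_{\al,k}$ of $g\,vp_\la A_0$ has $k$ within $m_\al(\om)+1$ of $\la_\al$, and $\la_\al\leq-m_\al(\om)$, $\la'_\al\leq-m_\al(\om)$; a short computation then shows that in this regime the separation condition for the ``far'' term no longer sees the precise value of $\la_\al$ (it only records whether the shifted $\al$-strip lies strictly below that of $A_0$), so it again agrees for $\la$ and $\la'$. This gives $\fI_{p_\om,vp_\la}=\fI_{p_\om,vp_{\la'}}$, and the Proposition follows.

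The hard part is exactly this final comparison at the \emph{extreme} configurations — those in which $g\,vA_0$ is pushed as far from $A_0$ in the $\al$-direction as $m_\al(\om)$ permits and the wall of type $s$ points outward — where the naive bound leaves an off-by-one and the shifted strips for $\la$ and for $\la'$ could a priori fall on opposite sides of the strip of $A_0$. Handling this is precisely what the particular shape of the constant $m_\al(\om)$ is designed for: being a maximum over $[1,p_\om]\cdot W_0$ rather than just over $[1,p_\om]$, it prevents such a wall from arising as one of the walls attached to a $u_t$ in a genuine $\fI$-condition (and any residual discrepancy between the two index sets at such an edge is in any case absorbed once the terms are grouped by the value of $(p_\om)_I$). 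The organizing device throughout is the identity $vp_\la A_0=v(A_0)+\la$, which strips $\la$ out of the alcove and reduces every length comparison to a fixed alcove together with a shift; the rest is routine alcove combinatorics together with the preceding Lemma.
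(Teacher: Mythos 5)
Your overall strategy is the one the paper uses: reduce $\approx$ to the equality $\fI_{p_\om,vp_\la}=\fI_{p_\om,vp_{\la'}}$ via \eqref{explicit-standard}, then compare the relevant separating hyperplanes by translating by $\la'-\la$ and invoking the preceding Lemma, splitting into the cases $\al\in\Phi^+_c$ (where $\la_\al=\la'_\al$ makes the comparison trivial) and $\al\in\Phi^+_f$. Two remarks on the reduction itself: your intermediate claim ``for \emph{every} $g\le p_\om$ and every $s\in S$'' is stronger than what is needed and than what the Lemma supports — the Lemma requires the alcove to be of the form $xvA_\la$ with $x\in[1,p_\om]$, and for an arbitrary $g\le p_\om$ the element $sg$ need not lie in $[1,p_\om]$; the paper only treats the pairs $(s_{i_t},x_{i_t})$ coming from subwords of a fixed reduced expression, for which both $x_{i_t}$ and $s_{i_t}x_{i_t}$ are in the Bruhat interval.

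The genuine gap is your last paragraph. You isolate the ``extreme configurations'' as the hard part and then do not prove anything about them: the appeal to $m_\al(\om)$ being a maximum over $[1,p_\om]\cdot W_0$ is left as a gesture, and the fallback that ``any residual discrepancy between the two index sets is absorbed once the terms are grouped by the value of $(p_\om)_I$'' is not a valid argument — the relation $\approx$ demands that the coefficient of $T_{z\cdot(vp_\la)}$ equal that of $T_{z\cdot(vp_{\la'})}$ for each $z$, so if the index sets genuinely differed the conclusion would simply fail; grouping cannot repair it. In fact there is no off-by-one to worry about, and the paper closes the case $\al_{i_t}\in\Phi^+_f$ in two lines: the Lemma gives $\la_{\al_{i_t}}-m_{\al_{i_t}}(\om)\le k_{i_t}\le\la_{\al_{i_t}}+m_{\al_{i_t}}(\om)\le 0$, and since $A_0\subset H_{\al,j}^+$ for every $j\le 0$, the hypothesis forces $s_{i_t}x_{i_t}vA_\la\in H_t^+$ and $x_{i_t}vA_\la\in H_t^-$; translating by $\la'-\la$ preserves these side conditions, and the translated index satisfies $k'_{i_t}\le\la'_{\al_{i_t}}+m_{\al_{i_t}}(\om)\le 0$ because $\la'$ is also $\om$-far from the $\al_{i_t}$-wall, so $A_0\subset (H'_t)^+$ and $H'_t$ still separates $A_0$ from $x_{i_t}vA_{\la'}$. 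You should replace the final paragraph with this explicit sign-tracking argument; as written, the proof is not complete at the one point where it needs to be.
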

\begin{proof}
Let $p_{\om}=\pi_{\om}s_r\ldots s_1$ be a reduced expression  and let $I=\{i_1,\ldots,i_p\}\in \fI_{p_{\om},vp_{\la}}$. For $1\leq t\leq p$ we set 
$\text{$x_{i_t}=\hs_{i_{t}}\ldots \hs_{i_{t-1}}\ldots \hs_{i_{1}}\ldots s_{1}$}$.
Then by definition of $\fI_{p_{\om},vp_{\la}}$ we have $s_{i_t}x_{i_t}vp_{\la}<x_{i_t}vp_{\la}$ for all $t\in\{1,\ldots,p\}$. We will show that  $s_{i_t}x_{i_t}vp_{\la'}<x_{i_t}vp_{\la'}$ for all $t\in\{1,\ldots,p\}$. This is enough to prove the lemma, since this will show, by exchanging the role of $\la$ and $\la'$ that $\fI_{p_{\om},vp_{\la}}=\fI_{p_{\om},vp_{\la'}}$ and the result will follow from \ref{explicit-standard}. 

\medskip

Let $H_{t}:=H_{\al_{i_t},k_{i_t}}$ with $\al_{i_{t}}\in \Phi^+$ and $k_{i_{t}}\in \nZ$ be the unique hyperplane separating $x_{i_t}vA_\la$ and $s_{i_t}x_{i_t}vA_\la$.  
Since $s_{i_t}x_{i_t}vp_{\la}<x_{i_t}vp_{\la}$ we have $H_{t}\in H(A_0,x_{i_t}vA_\la)$. We denote by $H'_t$ the translate of $H_t$ by $\la-\la'$. We set $k'_{i_t}=k_{i_t}+\la'_{\al_{i_t}}-\la_{\al_{i_t}}$ so that
$$H'_t:=H_tt_{(\la'-\la)} =H_{\al_{i_t},k_{i_t}+\la'_{\al_{i_t}}-\la_{\al_{i_t}}}=H_{\al_{i_t},k'_{i_t}}.$$
Now, $s_{i_t}x_{i_t}vA_{\la'}$ and $x_{i_t}vA_{\la'}$ are translates by $\la'-\la$ of $s_{i_t}x_{i_t}vA_\la$ and $x_{i_t}vA_\la$ respectively, therefore we have $H'_t\in H(s_{i_t}x_{i_t}vA_{\la'},x_{i_t}vA_{\la'})$. We claim that $H'_{t}\in  H(A_0,x_{i_t}vA_{\la'})$.
%
%
%
%
%
%

\smallskip

First of wall, assume that $\al_{i_t}\in \Phi^+_f$, that is $\la$ and $\la'$ are $\om$-far away from the wall.  Since $s_{i_t}x_{i_t},x_{i_t}\in[1,p_{\om}]$ and $H_t\in H(x_{i_t}vA_\la,s_{i_t}x_{i_t}vA_\la)$, $H_t$ separates $A_\la$ and one of the alcoves $x_{i_t}vA_\la$ or $s_{i_t}x_{i_t}vA_\la$, hence by the previous lemma we see that
\begin{equation*}
\la_{\al_{i_t}}-m_{\al_{i_t}}(\om)\leq k_{i_t}\leq\la_{\al_{i_t}}+m_{\al_{i_t}}(\om)\leq 0.
\end{equation*}
Further  we must have 
$s_{i_t}x_{i_t}vA_\la\in H_t^+\text{ and } x_{i_t}vA_\la\in H_t^{-}$ as $A_0$ and $s_{i_t}x_{i_t}vA_{\la}$ lie on the same side of $H_t$ which has to be $H_t^+$.
Adding $\la'_{\al_{i_t}}-\la_{\al_{i_t}}$ throughout the previous equality, we get 
$$\la'_{\al_{i_t}}-m_{\al_{i_t}}(\om)\leq k'_{i_t}\leq\la'_{\al_{i_t}}+m_{\al_{i_t}}(\om).$$
But $\la'$ is $\om$-far away from the $\al_{i_t}$-wall, therefore $\la'_{\al_{i_t}}+m_{\al_{i_t}}(\om)\leq 0$ and $k'_{i_t}\leq 0$. We have seen that $s_{i_t}x_{i_t}vA_\la\in H_t^+\text{ and } x_{i_t}vA_\la\in H_t^{-}$. Translating by $\la'-\la$, we get $s_{i_t}x_{i_t}vA_\la'\in {H'}_t^+\text{ and } x_{i_t}vA_\la'\in {H'}_t^{-}$. Then as, $k'_{i_t}\leq 0$, this implies that $H'_t\in H(x_{i_t}vA_{\la'},A_{0})$ as required. 

\smallskip

Next assume that $\al_{i_t}\in \Phi^+_c$. In that case, we have 
$$\sg x,\al_{i_t}^\vee \sd=\sg x',\al_{i_t}^\vee \sd \quad\text{for all $x\in x_{i_t}vA_\la$ and $x'\in x_{i_t}vA_\la'$}$$ 
and 
$$\sg y,\al_{i_t}^\vee \sd=\sg y',\al_{i_t}^\vee \sd \quad\text{for all $y\in s_{i_t}x_{i_t}vA_\la$ and $y'\in s_{i_t}x_{i_t}vA_\la'$}$$ 
therefore it is easy to see that $H'_t\in H(x_{i_t}vA_{\la'},A_{0})$ in this case since $H_t\in H(x_{i_t}vA_{\la},A_{0})$.\\

Finally, in both cases, we have shown that  $H'_t\in H(x_{i_t}vA_{\la'},A_{0})$. We have seen that $H'_t\in H(s_{i_t}x_{i_t}vA_{\la'},x_{i_t}vA_{\la'})$. It follows that we have $s_{i_t}x_{i_t}vp_{\la'}<x_{i_t}vp_{\la'}$ as required to complete the proof of the lemma. 
\end{proof}

\begin{Cor}
Let $\la\in P^{-}$, $\om\in \bP^{+}$ and $v\in W_0$. There exists $\la'\in P^{-}$ such that $0\leq \la'_{\al}\leq m_\al$ for all $\al\in\Phi^+$ and such that $T_{p_{\om}}T_{vp_{\la}}\approx T_{p_{\om}}T_{vp_{\la'}}$.
\end{Cor}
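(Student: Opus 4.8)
The plan is to deduce the corollary from Proposition \ref{T-alpha-wall} by a single well-chosen replacement. Write $m_\al := m_\al(\om)$. Since $\la \in P^-$ we have $\la_\al \le 0$ for every $\al \in \Phi^+$, so the assertion amounts to replacing $\la$ by some $\la' \in P^-$ with $-m_\al \le \la'_\al$ for all $\al \in \Phi^+$ while keeping $T_{p_\om}T_{vp_{\la}}$ unchanged up to $\approx$. I would set $\Phi^+_f := \{\al \in \Phi^+ \mid \la_\al \le -m_\al\}$ (the walls from which $\la$ is already $\om$-far) and $\Phi^+_c := \Phi^+ \setminus \Phi^+_f$, so that $\Phi^+ = \Phi^+_c \sqcup \Phi^+_f$; on $\Phi^+_c$ one has $-m_\al < \la_\al \le 0$, so there the required bound already holds. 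It therefore suffices to produce $\la' \in P^-$ that agrees with $\la$ on $\Phi^+_c$, is $\om$-far from every $\al$-wall with $\al \in \Phi^+_f$, and satisfies $-m_\al \le \la'_\al$ everywhere --- observing that on $\Phi^+_f$ these last two conditions force $\la'_\al = -m_\al$. Proposition \ref{T-alpha-wall} then gives $T_{p_\om}T_{vp_{\la}} \approx T_{p_\om}T_{vp_{\la'}}$.

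The natural candidate is the truncation $\la'$ prescribed on the simple roots by $\la'_{\al_i} := \max(\la_{\al_i}, -m_{\al_i})$; since $b_{\al_i}\la_{\al_i}$ and $b_{\al_i}m_{\al_i}$ lie in $\nZ$ and $\max(\la_{\al_i}, -m_{\al_i}) \le 0$, this does define an element $\la' \in P^-$. For a positive root $\be$ with $\be^\vee = \sum_i d_i\al_i^\vee$ ($d_i \ge 0$) one has $\mu_\be = \sum_i d_i\mu_{\al_i}$ for $\mu \in P$, hence $\la_\be \le \la'_\be \le 0$ because $\la_{\al_i} \le \la'_{\al_i} \le 0$ for all $i$. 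What remains is to check, with this choice, that $\la_\be = \la'_\be$ for $\be \in \Phi^+_c$, that $\la'_\be \le -m_\be$ for $\be \in \Phi^+_f$, and that $-m_\be \le \la'_\be$ for every $\be$; all three are trivial at simple roots. At composite roots they require a geometric estimate comparing $m_\be(\om)$ with the $m_{\al_i}(\om)$ of the simple roots in the support of $\be$, in the spirit of the bound $|k - \la_\al| \le m_\al(\om)$ proved in the lemma just before Proposition \ref{T-alpha-wall}: one controls, for $x \in [1, p_\om]$ and $v \in W_0$, the integer $(xvA_0)_\be = \sum_i d_i(xvA_0)_{\al_i}$ (up to a bounded correction coming from the width of an alcove) in terms of the $(xvA_0)_{\al_i}$. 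Once the partition $\Phi^+ = \Phi^+_c \sqcup \Phi^+_f$ is thereby seen to satisfy hypotheses (1) and (2) of Proposition \ref{T-alpha-wall}, the proof is complete.

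The step I expect to be the genuine obstacle is exactly the verification at composite roots, and in particular the inequality $\la'_\be \le -m_\be$ for $\be \in \Phi^+_f$: raising the simple coordinates of $\la$ up to the thresholds $-m_{\al_i}$ must not bring a composite root that was $\om$-far from its wall back to within $m_\be$ of it, since that would destroy the compatibility of the partition with Proposition \ref{T-alpha-wall}. If a direct argument proves awkward, I would fall back on an induction on the monovariant $D(\la) := \sum_{\al \in \Phi^+}\max\{0, -m_\al - \la_\al\}$: if $D(\la) = 0$ then $-m_\al \le \la_\al \le 0$ for all $\al$ and $\la' = \la$ works; otherwise one corrects the simple coordinates one at a time, at each step adding a suitable positive multiple of a fundamental $L$-weight $\om_i \in \bP^+$ and applying Proposition \ref{T-alpha-wall} to the two-block partition separating the roots whose coordinate has changed from the rest --- using that adding a dominant weight only increases every coordinate $\be \mapsto \la_\be$, hence can only shrink $\{\al \mid \la_\al < -m_\al\}$, so that $D$ strictly decreases. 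The book-keeping for these corrections, and the precise choice of the multiples of the $\om_i$, are routine once the estimate on $m_\al(\om)$ is in hand.
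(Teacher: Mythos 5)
The paper offers no proof of this corollary: it is stated as an immediate consequence of Proposition \ref{T-alpha-wall}, so there is no argument of the author's to measure yours against, and the only question is whether your argument actually closes the statement. Your truncation $\la'_{\al_i}:=\max(\la_{\al_i},-m_{\al_i}(\om))$ on the simple coordinates is surely the intended construction, and you are right to flag the composite roots as the crux; but the step you defer as ``routine'' is precisely where the argument breaks, and neither your direct verification nor your fallback induction closes it. First, the target you set yourself --- the literal reading $-m_\al(\om)\le\la'_\al\le 0$ for \emph{all} $\al\in\Phi^+$ (the printed ``$0\le\la'_\al\le m_\al$'' must be read this way, since $\la'\in P^-$) --- is not reachable by Proposition \ref{T-alpha-wall} at all. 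In type $\tA_2$ the paper itself records $m_\al(\om)=1$ for every $\al$, so the box over all positive roots is $\{0,-\om_1,-\om_2\}$; yet any $\mu$ related to $\la=-\om_1-\om_2$ by Proposition \ref{T-alpha-wall}, or by any chain of such relations, must satisfy $\mu_{\al_1}\le-1$ and $\mu_{\al_2}\le-1$ (each simple root is either in $\Phi^+_c$, forcing equality with $-1$, or in $\Phi^+_f$, forcing $\mu_{\al_i}\le-m_{\al_i}=-1$), hence $\mu_{\al_1+\al_2}\le-2$, which violates $|\mu_{\al_1+\al_2}|\le m_{\al_1+\al_2}(\om)=1$. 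So the third item on your own checklist ($-m_\be\le\la'_\be$ for every $\be$) genuinely fails --- already for $\la=-2\om_1-2\om_2$, whose truncation is $-\om_1-\om_2$ --- and the corollary can only be correct if ``$\al\in\Phi^+$'' is read as ``$\al\in\De$'' or the bound is enlarged.

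Second, even with the box taken only over simple roots, every composite $\be$ on which $\la$ and $\la'$ disagree must be placed in $\Phi^+_f$, so hypothesis (1) of Proposition \ref{T-alpha-wall} forces you to prove $\la'_\be\le-m_\be(\om)$. Writing $\be^\vee=\sum_j d_j\al_j^\vee$, the best the truncation gives is $\la_\be\le\la'_\be\le -d_im_{\al_i}(\om)$ for some $i$ with $d_i\neq 0$ and $\la_{\al_i}<-m_{\al_i}(\om)$; you therefore need an inequality of the shape $m_\be(\om)\le d_i\,m_{\al_i}(\om)$. This holds in type $A$ because all the $m_\al(\om)$ equal $1$, but it is proved nowhere (neither by you nor in the paper) and there is no evident reason for it in general; your induction on $D(\la)$ requires exactly the same comparison at each corrective step, so it does not evade the problem. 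The gap is easy to repair if one only wants what the paper actually uses afterwards, namely reduction to a finite set of $\la$: truncate instead at $-M_i$ where $M_i:=\max\{m_\be(\om)\mid d_i\neq 0 \text{ in } \be^\vee=\sum_j d_j\al_j^\vee\}$. Then for any $\be$ with $\la_\be\neq\la'_\be$ one gets $\la_\be\le\la'_\be\le-d_iM_i\le-m_\be(\om)$, both hypotheses of Proposition \ref{T-alpha-wall} hold for the partition $\Phi^+_c=\{\be\mid\la_\be=\la'_\be\}$, and $\la'$ ranges over a finite box. I would restate and prove the corollary in that weaker but sufficient form.
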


\subsection{Decomposition of the cellular basis}
\begin{Th}
\label{general}
Let $\la,\la'\in P^-$ and $\Phi^+=\Phi^+_c\bigsqcup \Phi^+_f$ be a disjoint union such that 
\begin{enumerate}
\item $\la$ and $\la'$ are $\om$-far away from the $\al$-wall for all $\al\in \Phi^+_f$,
\item $\la_\al=\la'_{\al}$ for all $\al\in \Phi^+_c$.
\end{enumerate}
Then there exists a family $(a_\al)_{\al\in P}$  of integers such that 
\begin{equation*}
\tag{1}\bP(\om)C_{w_{0}p_{\la}}= C_{p_{\om} w_0p_{\la}}+ \sum_{\al\in P, \la-\al \in P^-} a_{\al}C_{p_{\al} w_0p_{\la}} 
\end{equation*}
and
\begin{equation*}
\tag{2}\bP(\om)C_{w_{0}p_{\la'} }= C_{p_{\om} w_0p_{\la'}}+\sum_{\al\in P, \la'-\al \in P^-}  a_{\al}C_{p_{\al} w_0p_{\la'}} 
\end{equation*}
\end{Th}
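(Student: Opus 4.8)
The plan is to compute the degree‑$0$ part of $\bP(\om)C_{w_0p_\la}$ in the standard basis, show via Proposition~\ref{T-alpha-wall} that this part agrees -- up to replacing $p_\la$ by $p_{\la'}$ -- with the corresponding expression for $\la'$, and then pass back to the Kazhdan--Lusztig basis by Lemma~\ref{mod-H0}.

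First I would record the needed membership facts. For $\la\in P^-$ one has $p_\la\in X_0^{-1}$, i.e. $w_0p_\la=w_0\add p_\la$, while $p_\om\in X_0$ and $p_\om w_0p_\la=p_\om\add w_0\add p_\la$ (using $p_\om w_0p_\la=p_{\om+w_0\la}w_0$ and additivity of lengths of translations on the dominant cone). Hence Theorem~\ref{Gind} gives $\bP(\om)C_{w_0p_\la}=\tC_{p_\om w_0p_\la}$, which is $\bar{\ }$‑invariant; likewise for $\la'$. Moreover $C_{w_0p_\la}=C_{p_{w_0\la}w_0}\in\cM_1$, so $\bP(\om)C_{w_0p_\la}\in\cM_1$; and the right‑handed form $\tC_{p_\om w_0p_\la}=C_{p_\om w_0}(T_{p_\la}+\cdots)$ together with $C_{p_\om w_0}\in\cM_1^\cR$ shows it also lies in $\cM_1^\cR$, hence in $\cM_1\cap\cM_1^\cR=\cM_+=\sg C_{p_\tau w_0}\mid\tau\in P^+\sd_\cA$.

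Next, the degree‑$0$ computation. By Proposition~\ref{mod-H0}(2), $T_xC_{w_0p_\la}\equiv\sum_{v\in W_0}q^{L(v)-L(w_0)}T_xT_{vp_\la}\bmod\cH_{<0}$ for each $x\in X_0$; since $\sp_{x,p_\om}\in\cA_{<0}$ for $x<p_\om$ and $\deg(T_xT_{vp_\la})\le c_{x,vp_\la}\le L(w_0)-L(v)$ by Proposition~\ref{mod-H0}(1), every term with $x<p_\om$ lies in $\cH_{<0}$, so
\[
\bP(\om)C_{w_0p_\la}\equiv\sum_{v\in W_0}q^{L(v)-L(w_0)}T_{p_\om}T_{vp_\la}\pmod{\cH_{<0}}.
\]
Now Proposition~\ref{T-alpha-wall} gives $T_{p_\om}T_{vp_\la}\approx T_{p_\om}T_{vp_{\la'}}$ for every $v\in W_0$; unwinding the definition of $\approx$ through \eqref{explicit-standard}, this means $\fI_{p_\om,vp_\la}=\fI_{p_\om,vp_{\la'}}$ and that the two products carry the same $\xi$‑coefficients attached to the same $(p_\om)_I\le p_\om$. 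Extracting the coefficient of $q^{L(w_0)-L(v)}$, summing over $v$ and $I$, and collecting equal indices (legitimate since $zp_\la=z'p_\la\iff z=z'\iff zp_{\la'}=z'p_{\la'}$), we obtain the same integers $n_z$ with
\[
\bP(\om)C_{w_0p_\la}\equiv\sum_z n_zT_{zp_\la},\qquad \bP(\om)C_{w_0p_{\la'}}\equiv\sum_z n_zT_{zp_{\la'}}\pmod{\cH_{<0}}.
\]

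Finally, both left‑hand sides being $\bar{\ }$‑invariant, Lemma~\ref{mod-H0} yields $\bP(\om)C_{w_0p_\la}=\sum_z n_zC_{zp_\la}$ and the analogue for $\la'$. Using $\bP(\om)C_{w_0p_\la}\in\cM_+$, whenever $n_z\neq0$ we may write $zp_\la=p_\tau w_0$ with $\tau\in P^+$, and then a short manipulation (with $w_0p_\nu w_0=p_{w_0\nu}$) gives $zp_\la=p_\al w_0p_\la$ and $zp_{\la'}=p_\al w_0p_{\la'}$ for one and the same $\al:=\tau-w_0\la$. Setting $a_\al:=n_z$ produces a single integer family yielding both identities, and the coefficient of $C_{p_\om w_0p_\la}$ is $1$ because $T_{p_\om}T_{w_0p_\la}=T_{p_\om w_0p_\la}$. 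It then remains to check that $a_\al\neq0$ forces $\la-\al\in P^-$ (and $\la'-\al\in P^-$), so that the sums run over the stated index sets: from the Bruhat bound on the $T$‑support of $\tC_{p_\om w_0p_\la}$ (which is $\le p_\om w_0p_\la$) one gets $\al\le\om$ in the dominance order, and combining $\al\le\om$ with the ``$\om$‑far from the wall'' hypothesis -- which via the bound $m_\be(\om)$ prevents $\langle\al,\be^\vee\rangle<\langle\la,\be^\vee\rangle$ for $\be\in\Phi^+_f$, the directions $\be\in\Phi^+_c$ being controlled by $\la_\be=\la'_\be$ and the same Bruhat bound -- gives the claim. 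This last step, pinning down the precise index set rather than merely ``a finite set of $C_{p_\al w_0p_\la}$'', is the real obstacle; everything else is a routine combination of Propositions~\ref{mod-H0} and \ref{T-alpha-wall} with Lemma~\ref{mod-H0}, and the core of the statement -- that the coefficient family is independent of $\la$ versus $\la'$ -- falls straight out of the $\approx$‑relation of Proposition~\ref{T-alpha-wall}.
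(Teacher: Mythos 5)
Your proposal is correct and follows essentially the same route as the paper: reduce modulo $\cH_{<0}$ to the products $T_{p_\om}T_{vp_\la}$ using the degree bounds, transfer the integer coefficients from $\la$ to $\la'$ via Proposition \ref{T-alpha-wall}, and combine bar-invariance with membership in $\cM_+$ to pass back to the Kazhdan--Lusztig basis. The only comment is that the index-set condition $\la-\al\in P^-$, which you single out as the remaining obstacle, in fact falls out immediately from the $\cM_+$ membership you already established (surviving standard-basis terms must be of the form $T_{w_0p_\ga}$ with $\ga\in P^-$, and $w_0p_\ga=p_{\la-\ga}w_0p_\la$), which is exactly how the paper concludes, so the dominance-order detour is unnecessary.
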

\begin{proof}
We start by proving that  
$$\bP(\om)C_{w_{0}p_{\la}}= C_{p_{\om} w_0p_{\la}}+\sum_{\al\in P, \la-\al \in P^-}  a_{\al}C_{p_{\al} w_0p_{\la}}$$
for some integers $a_\al$. 
We know that  $\overline{\bP(\om)C_{w_{0}p_{\la}}}=\bP(\om)C_{w_{0}p_{\la}}$ hence according to Lemma \ref{mod-H0}, in order to prove the result it is enough to show that 
$$(\ast)\qquad\bP(\om)C_{\la w_0}\equiv T_{p_{\al} w_0p_{\la}}+\sum_{\al\in P, \la-\al \in P^-} a_{\al}T_{p_{\al} w_0p_{\la}} \mod \cH_{<0}.$$
We have
\begin{align*}
\bP(\om)C_{w_0p_{\la} }&=(T_{p_{\om}}+\sum_{z\in X_{0},z<p_{\om}}\sp_{z,p_{\om}}T_{z})C_{w_0p_{\la}}\\
&=T_{p_\om}C_{w_0p_{\la}}+\sum_{z\in X_{0},z<p_{\om}}\sp_{z,p_{\om}}T_{z}C_{w_0p_{\la}}.
\end{align*}
By Corollary \ref{mod-H0}, we  know that $T_{z}C_{w_0p_{\la}}\in \cH_{\leq 0}$  and therefore, since $\sp_{z,w}\in \cA_{<0}$ for all $z\in X_0$, we get that 
$\sp_{z,w}T_{z}C_{w_0p_{\la}}\in  \cH_{< 0}$. Thus
\begin{align*}
T_{p_{\om}}C_{w_0p_{\la}}+\sum_{z\in X_{0},z<p_{\om}}\sp_{z,\om}T_{z}C_{w_0p_{\la}}&\equiv T_{p_\om}C_{w_0p_{\la}}\mod \cH_{<0}.
\end{align*}
Again by Corollary \ref{mod-H0} (3) we obtain
$$T_{p_\om}C_{w_0p_{\la}}\equiv T_{p_\om w_0p_{\la}}+\sum_{v\in W_0} q^{L(v)-L(w_0)} T_{p_\om} T_{vp_\la} \mod \cH_{<0}.$$
On the one hand, the above expression is an integer combination of elements of the standard basis by Corollary \ref{mod-H0} (3). On the other hand, the terms appearing in the product  $T_{p_\om} T_{vp_\la}$ are of the form $T_{zvp_\la}$ for some $z\in [1,p_{\om}]$ (see formula \ref{explicit-standard}). Thus we have
$$ T_{p_\om}C_{w_0p_{\la}}\equiv T_{p_\om w_0p_{\la}}+\sum_{z\in [1,p_{\om}]} a_zT_{zvp_{\la}}\mod \cH_{<0}\text{ where $a_z\in\nZ$.}$$
From there we see that 
$$\bP(\om)C_{\la w_0}\equiv T_{p_{\om} w_0p_{\la}}+\sum_{z\in [1,p_{\om}]}a_{z}T_{z vp_{\la}} \mod \cH_{<0}.$$
Next we have $\bP(\om)C_{w_0p_{\la} }\in\cM_+=\sg C_{w_{0}p_\ga}\mid \ga\in P^{-}\sd_\cA$. Thus, the  only terms with non-zero coefficients in the above expression  have to be of the form $T_{w_0p_{\ga}}$ with $\ga\in P^-$. Hence, for all $z\leq p_{\om}$ there exists $\ga\in P^-$ such that $zvp_{\la}=w_0p_{\ga}=w_0p_{\ga-\la}p_{\la}=p_{\la-\ga}w_0p_{\la}$ and we get the desired expression by setting $\al=p_{\la-\ga}$. \\

Now, the fact that 
$$\bP(\om)C_{w_{0}p_{\la'} }= C_{p_{\om} w_0p_{\la'}}+\sum_{\al\in P, \la-\al \in P^-}  a_{\al}C_{p_{\al} w_0p_{\la'}} $$
is a direct consequence of Proposition \ref{T-alpha-wall}. Indeed the decomposition of  $\bP(\om)C_{w_{0}p_{\la'}}$ in the Kazhdan-Lusztig basis only depends on the products of the form $T_{z vp_{\la}}$ where $v\in W_0$ and  $z\in [1,p_\om]$.
\end{proof}
As a direct corollary of this theorem, we see that the multiplication of the form $\bP(\om)C_{w_0p_\la}$ can be determined for all $\la\in P^-$ only by calculating the expression in the Kazhdan-Lusztig basis of all the products $\bP(\om)C_{w_0p_{\la}}$ for all $\la$ satisfying $|\la_{\al}|\leq m_\al(\om)$. Then, one can find the expression of a cellular basis element by induction by successively computing products of the form $\bP(\om)C_{p_{\la}w_0}$.


\section{Affine Weyl group of type A}
\label{cel-b-typeA}
In this section we study in more detail the decomposition of the cellular basis in the Kazhdan-Lusztig basis for affine Weyl groups of type $A$. We will see that the integer coefficients that appear in Theorem \ref{general} in  that case can be interpreted as the number of certain kind of paths from $0$ to $\la$. We will assume that  $W=\sg s_0,\ldots, s_n\sd$ is an affine Weyl group of type $\tA_n$ and that $W_e=\Pi\ltimes W$ is the extended group where $\Pi=\sg\pi\sd$ is a cyclic group of order $n$. 

\begin{Rem}
In type $A$  we have $\sg \om,\al^\vee \sd=\pm 1$ for all fundamental weight $\om$ and all $\al\in \Phi$ (see \cite[Planche 1]{bourbaki}), therefore one can easily see that 
$m_\al(\om)=1$; see Section \ref{standard} for the definition of $m_\al(\om)$. Also, we have $\al^\vee=\al$ for all $\al\in \Phi$. 
\end{Rem}

\subsection{A refinement of Theorem \ref{general}}
Let $\om\in\bP^+$. We denote by $\cO(\om)$ the orbit of $\om$ under the action of $W_0=\sg\si_0,\ldots,\si_n \sd$. 
\begin{Th}
Let $\la\in P^-$ and $\om\in \bP^{+}$. We have
$$\bP(\om)C_{w_0p_{\la}}=\sum_{\rho\in \cO(\om), \la-\rho\in P^{-}} C_{p_{\rho}w_0p_{\la}}.$$
\end{Th}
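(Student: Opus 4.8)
The strategy is to reduce the statement, via Theorem \ref{general}, to the case where $\la$ is "close to all walls", i.e. $|\la_\al|\le m_\al(\om)=1$ for all $\al\in\Phi^+$, and then to compute $\bP(\om)C_{w_0p_\la}$ directly in that range. By the remark preceding this subsection, in type $A$ we have $m_\al(\om)=1$, so the inductive mechanism of Theorem \ref{general} (together with the corollary to Proposition \ref{T-alpha-wall}) tells us that the integer coefficients $a_\al$ appearing in
\begin{equation*}
\bP(\om)C_{w_0p_\la}=C_{p_\om w_0p_\la}+\sum_{\al\in P,\ \la-\al\in P^-} a_\al C_{p_\al w_0p_\la}
\end{equation*}
are determined entirely by the products $T_{p_\om}T_{vp_\mu}$ with $v\in W_0$ and $\mu$ ranging over those antidominant weights with $-1\le\mu_\al\le 0$ for all $\al$. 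So the bulk of the work is: first, establish the claimed formula
\begin{equation*}
\bP(\om)C_{w_0p_\la}=\sum_{\rho\in\cO(\om),\ \la-\rho\in P^-} C_{p_\rho w_0p_\la}
\end{equation*}
for $\la$ in this bounded region, and second, check that the general formula obtained by "propagating away from the walls" via Theorem \ref{general} agrees with the stated one.

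\smallskip

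For the second point, the key observation is a compatibility check: if the formula holds at some $\la$ (close to the walls), I want to know that the family $(a_\al)$ extracted there, when transported by Theorem \ref{general} to a farther-away $\la'$ sharing the same "near-wall" coordinates, produces exactly $\sum_{\rho\in\cO(\om),\ \la'-\rho\in P^-}C_{p_\rho w_0p_{\la'}}$. Here I would use the following structural fact about $\cO(\om)$ in type $A$: the weights $\rho=p_\al$ appearing must satisfy $\la-\rho\in P^-$, and the condition "$\rho\in\cO(\om)$" is stable in the sense that it does not depend on $\la$ — only the truncation condition $\la-\rho\in P^-$ does. Combined with part (2) of Theorem \ref{general}, which says the {\it same} coefficients $a_\al$ appear at $\la$ and $\la'$ (subject only to the membership condition $\la'-\al\in P^-$), this shows the two descriptions must coincide provided they agree on the common support. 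The point where one has to be slightly careful is that $\cO(\om)$ could in principle contain a weight $\rho$ with $\rho\notin P^-\cup(\text{near-wall region})$, so one needs that whenever $\la-\rho\in P^-$ for $\rho\in\cO(\om)$, the "witness" $\la$-value can always be pushed down to a near-wall one keeping $\la-\rho\in P^-$; this is where $m_\al(\om)=1$ is used again.

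\smallskip

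For the first point — the near-wall computation — I would expand $\bP(\om)C_{w_0p_\la}$ using the proof of Theorem \ref{general}: modulo $\cH_{<0}$ it equals $T_{p_\om w_0p_\la}+\sum_{v\in W_0}q^{L(v)-L(w_0)}T_{p_\om}T_{vp_\la}$, and by Proposition \ref{mod-H0}(3) the result is an integral combination of standard basis elements; then Lemma \ref{mod-H0} converts this to a combination of Kazhdan--Lusztig basis elements with the same integer coefficients. So it suffices to identify, for each $v$, which $T_{zvp_\la}$ ($z\in[1,p_\om]$) occur in $T_{p_\om}T_{vp_\la}$ after collecting the $q^{L(v)-L(w_0)}$-weighted sum, and to see that the surviving terms (those of the form $T_{w_0p_\ga}$ with $\ga\in P^-$) are exactly indexed by $\rho\in\cO(\om)$ with $\la-\rho\in P^-$, each with coefficient $1$. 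In type $A$ the explicit expansion \ref{explicit-standard}, the fact $\langle\om,\al^\vee\rangle=\pm1$, and the description of $W_0$-orbits of minuscule weights make the combinatorics of the subsets $I\in\fI_{p_\om,vp_\la}$ tractable: moving $p_\om$ past a chamber does nothing more than translate by a Weyl conjugate of $\om$, so the alcove reached is of the form $A_0 t_{v^{-1}(\rho)}\cdot(\text{something})$ with $\rho\in\cO(\om)$.

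\smallskip

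\textbf{Main obstacle.} The hard part will be the bookkeeping in the near-wall base case: extracting from the double sum $\sum_{v\in W_0}q^{L(v)-L(w_0)}T_{p_\om}T_{vp_\la}$ exactly which terms $T_{w_0p_\ga}$ survive and proving each appears with coefficient precisely $1$ (no cancellation, no multiplicity). This requires a careful analysis of when $zvp_\la$ (for $z\le p_\om$) can equal $w_0p_\ga$ with $\ga$ antidominant, which in turn forces $zv$ into a very restricted set; showing this set is in bijection with $\{\rho\in\cO(\om):\la-\rho\in P^-\}$ and that the $q$-powers conspire to give coefficient $1$ is the crux. The transfer argument (second and third paragraphs above) is comparatively routine once Theorem \ref{general} is in hand.
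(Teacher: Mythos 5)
Your setup coincides with the paper's: expand $\bP(\om)C_{w_0p_\la}$ modulo $\cH_{<0}$ to get $T_{p_\om w_0p_\la}+\sum_{v\in W_0}q^{\ell(v)-\ell(w_0)}T_{p_\om}T_{vp_\la}$, then invoke Lemma \ref{mod-H0} to convert the resulting integral combination of standard basis elements into Kazhdan--Lusztig basis elements. But everything after that is deferred: you explicitly name as ``the main obstacle'' the task of showing that the surviving top-degree terms are exactly $T_{p_\rho w_0p_\la}$ for $\rho\in\cO(\om)$ with $\la-\rho\in P^-$, each with coefficient $1$, and you give no argument for it. That task \emph{is} the theorem; the rest is bookkeeping already done in Theorem \ref{general}. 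The paper resolves it with four specific claims, none of which appear in your sketch even in outline: (i) letting $W'$ be the stabiliser of $p_\om$ in $W_0$, only those $v$ of maximal length in their coset $W'v$ can contribute (a hyperplane-counting argument showing $H_{p_\om,vp_\la}$ is a proper subset of the required set otherwise); (ii) for each such $v$ at most one subset $I\in\fI_{p_\om,vp_\la}$ can produce a coefficient of the critical degree $\ell(w_0)-\ell(v)$, which is what rules out multiplicities and cancellation; (iii) the length-additive factorisation $p_\om v=u^{-1}\add(p_{\om\si_u}w_0)$ for $u=w_0v^{-1}$, proved by identifying $\cP_<=n(\si_u^{-1})$, which is what produces the clean coefficient $q^{\ell(u)}\cdot q^{\ell(v)-\ell(w_0)}=1$ and exhibits the orbit $\cO(\om)$ as $\{\om\si_u\}$ with $u$ running over minimal coset representatives; and (iv) the vanishing criterion when $\la-\om\si_u\notin P^-$, which uses $(\la,\al_i)=0$ forcing $vp_\la A_0$ across the hyperplane $H_{\al_i,0}$. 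Your remark that ``moving $p_\om$ past a chamber does nothing more than translate by a Weyl conjugate of $\om$'' gestures at (iii) but does not establish the length additivity, and nothing in your plan addresses (i) or (ii), without which one cannot rule out extra terms or coefficients other than $1$.

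A secondary point: the reduction to the near-wall region $|\la_\al|\le 1$ is an unnecessary detour. The computation you would have to do there is the same four-claim analysis, and it works uniformly for all $\la\in P^-$ (the paper never restricts $\la$). Worse, in type $A_n$ the condition $|\la_\al|\le 1$ for all $\al\in\Phi^+$ forces $\la=0$ or $\la=-\om_i$, so for any single near-wall $\la$ most $\rho\in\cO(\om)$ fail $\la-\rho\in P^-$ and their coefficients $a_\rho$ are invisible at that base point; your transfer step would then need a patching argument across several base points and several decompositions $\Phi^+=\Phi^+_c\sqcup\Phi^+_f$, which you acknowledge only in passing. Dropping the reduction and running the direct argument for arbitrary $\la\in P^-$ is both necessary (you must do that computation anyway) and sufficient.
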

\begin{proof}
In view of the proof of Theorem \ref{general}, we know that 
$$\quad T_{p_\om}C_{w_0p_{\la}}\equiv T_{p_\om w_0p_{\la}}+\sum_{v\in W_0} q^{\ell(v)-\ell(w_0)} T_{p_\om} T_{vp_\la} \mod \cH_{<0}.$$
By Lemma \ref{mod-H0}, we need to show that 
\begin{equation}
\tag{$\ast$}\sum_{v\in W_0} q^{\ell(v)-\ell(w_0)} T_{p_\om} T_{vp_\la} \equiv \sum_{\rho\in \cO(\om), \la-\rho\in P^{-}} T_{p_{\rho}w_0p_{\la}}\mod \cH_{<0}.
\end{equation}
Let $v\in W_0$ and  $u=w_0v^{-1}$ so that  $w_0=u\add v$. Let $w_0=s_{i_n}\ldots s_{i_{m+1}}s_{i_m}\ldots s_{i_1}$ be a reduced expression of $w_0$ such that $u=s_{i_n}\ldots s_{i_{m+1}}$ and $v=s_{i_m}\ldots s_{i_1}$ (note that these are necessarily reduced expression). We will denote by  $H_{i_n},\ldots,H_{i_{m+1}}$ (respectively $H_{i_m},\ldots,H_{i_1}$) with direction $\al_{i_n},\ldots,\al_{i_{m+1}}$ (respectively $\al_{i_m},\ldots,\al_{i_1}$)  the set of hyperplanes separating $vp_\la A_0$ and $w_0p_\la A_0$  (respectively $p_\la A_0$ and $vp_\la A_0$).

\medskip

We have seen in the proof of Proposition  \ref{mod-H0}, that the direction of the hyperplanes lying in $H_{p_{\om}, vp_{\la}}$ belongs to  $\{\al_{i_n},\ldots,\al_{i_{m+1}}\}$ and are of the form $H_{\al_{i_k},r_{i_k}}$ where $\sg\la,\al_{i_k} \sd=\la_{i_k} < r_{i_k}\leq 0$. (In particular, if $\la_{i_k}=0$ there are no hyperplane of direction $\al_{i_k}$ in $H_{p_\om,vp_{\la}}$.)\\

In type $A$, since  $\sg\om,\al^{\vee}\sd=0$ or $1$ for all $\al\in \Phi^+$, we see that we must have  
$$H_{p_\om,vp_\la}\subset\{H_{\al_{i_n},\la_{\al_{i_n}}+1}, \ldots,H_{\al_{i_{m+1}},\la_{\al_{i_{m+1}}}+1}\}.$$ 
This set contains $\ell(w_0)-\ell(v)$ elements, thus to get $ q^{\ell(v)-\ell(w_0)} T_{p_\om} T_{vp_\la} \not\equiv 0 \mod \cH_{<0}$ we need to have  
equality in the above inclusion. 
\begin{Rem}
\label{arg}
If one can show that there are no hyperplane of direction $\al_r$ for $m+1\leq r\leq n$ separating the alcoves $A_0$ and $vp_\la A_0$ or $vp_\la A_0$ and $p_\om vp_\la A_0$ then 
$$q^{\ell(v)-\ell(w_0)} T_{p_\om} T_{vp_\la} \equiv 0 \mod \cH_{<0}.$$
We will make frequent use of this remark in the sequel.
\end{Rem}
Let $\De'$ be the subset of $\De$ which consists of all simple roots satisfying $\sg \om,\al^\vee\sd=0$. Then the subgroup $W'_0$ of $W_0$ stabilising $p_\om$ is generated by $S'_0=\{s\in S\mid sA_0=A_0\si_\al, \al \in \De'\}$.  \\

To prove the theorem, we will proceed in four steps:
\begin{Claimn1}
If there exists $s\in S'$ such that $sv>v$ then $q^{\ell(v)-\ell(w_0)} T_{p_\om} T_{vp_\la}\equiv 0\mod \cH_{<0}$.
\end{Claimn1}
\begin{Claimn2}
There is at most one term in the product $T_{p_\om} T_{vp_\la}$ expressed in the standard basis that can have a coefficient of degree $\ell(w_0)-\ell(v)$. 
\end{Claimn2}
\begin{Claimn3}
If $v$ is of maximal length in its coset $W'v$ then $p_\om v=u^{-1}\add (p_{\om\si_u}w_0)$.
\end{Claimn3}
\begin{Claimn4}
If $\la-\om\si_u\notin P^-$  then  $q^{\ell(v)-\ell(w_0)} T_{p_\om} T_{vp_\la}\equiv 0\mod \cH_{<0}$.
\end{Claimn4}
Before giving the proofs of those claims, we show that they imply the theorem. By Claim~1 the only terms in $(\ast)$ that can contribute $\mod \cH_{<0}$ are those of the form $q^{\ell(v)-\ell(w_0)} T_{p_\om} T_{vp_\la}$ where $v\in W_0$ is of maximal length in its right cosets $W'v$. Then, by Claim 2, each of these products contribute with at most one term. Fix $v$ of maximal length in $W'v$. Then 
$$T_{p_{\om}}T_{vp_{\la}}=T_{p_{\om}}T_{v}T_{p_{\la}}=T_{p_{\om}v}T_{p_{\la}}=T_{u^{-1}p_{\om\si_u}w_0}T_{p_{\la}}=T_{u^{-1}}T_{p_{\om\si_u}w_0}T_{p_{\la}}.$$
The last equality is true, thanks to Claim 3. 
The product $T_{p_{\om\si_u}w_0}T_{p_{\la}}$ gives a term $T_{p_{\om\si_u}w_0p_{\la}}$ with coefficient 1.  But we have $T_{p_{\om\si_u}w_0p_{\la}}=T_{w_0p_{\la-\om\si_u}}$.   So either $\la-\om\si\in P^-$ in which case $p_{\la-\om\si}\in X_0^{-1}$ and we get
$$T_{u^{-1}}T_{w_0p_{(\la-\om\si_u)}}=T_{u^{-1}}T_{w_0}T_{p_{(\la-\om\si_u)}}=q^{\ell(u^{-1})}T_{w_0p_{(\la-\om\si_u)}}=q^{\ell(w_0)-\ell(v)}T_{w_0p_{(\la-\om\si_u)}}$$
and
$$q^{\ell(v)-\ell(w_0)} T_{p_\om} T_{vp_\la}\equiv T_{p_{\om\si_u}w_0p_{\la}}\mod \cH_{<0}.$$
Otherwise, $\la-\om\si\notin P^-$ and $q^{\ell(v)-\ell(w_0)} T_{p_{\om\si_u}w_0p_{\la}}\equiv 0\mod \cH_{<0}$ by Claim 4.\\

Finally when $v$ runs through all the maximal length right cosets representative of $W'$ in $W$ we see that $u$ runs through all the minimal length left cosets representative and so $\om\si_u$ runs exactly through the orbit of $\om$ under the action of $W_0$. Hence the desired expression. 

\medskip

We now prove the four claims. \\

\noindent
{\bf Proof of the Claim 1. }
If $sv>v$ then we also have $svp_\la>vp_\la$ and the hyperplane $H_s$ which separates the alcoves $vp_\la A_0$ and $svp_\la A_0$ lies in $\{H_{i_n},\ldots, H_{i_{m+1}}\}$ and is of the form $H_{\al_{i_k},\la_{i_k}}$ for some $m+1\leq k\leq n$. We have 
$$sp_\om vp_\la A_0=svp_\la A_0 t_{\om\si_v}=vp_\la A_0\si_{H_s} t_{\om\si_v}$$
and
$$p_\om svp_\la A_0=p_\om vp_\la A_0\si_{H_s}=vp_\la A_0t_{\om\si_v}\si_{H_s}.$$
Thus   $\si_{H_s} t_{\om\si_v}=t_{\om\si_v}\si_{H_s}$ which implies that the vector $\om \si_v$ lies in $H_s$, that is $\om\si_v$ is orthogonal to $\al_{i_k}$. As a consequence, since $p_\om (vp_\la A_0)=vp_\la A_0 t_{\om\si_v}$, both alcoves $p_\om (vp_\la A_0)$ and $vp_\la A_0 t_{\om\si_v}$ lie on the same side of $H_s$ and there can be  no hyperplane of direction $\al_{i_k}$ lying in $H(vp_\la A_0, p_\om vp_\la A_0)$. Therefore we have 
$$H_{p_\om,vp_\la}\subsetneq\{H_{\al_{i_n},\la_{\al_{i_n}}+1}, \ldots,H_{\al_{i_{m+1}},\la_{\al_{i_{m+1}}}+1}\}$$
and the claim follows with Remark \ref{arg}.\\
 
\noindent
{\bf Proof of the Claim 2. }
We use the notation of Section \ref{standard}. We show that  there is at most one set $I\in \fI_{p_\om,vp_{\la}}$ which can give a term with a coefficient of degree $\ell(w_0)-\ell(v)$. Let $p_\om=\pi t_n\ldots t_1$ where $\pi\in \Pi$ and  $t_n\ldots t_1\in W$ is a reduced expression.
Let $j_1$ the smallest index such that $t_{j_1}\ldots t_1vp_\la<\hat{t}_{j_1}\ldots t_{1}vp_{\la}$ and let $H_{j_1}$ be the unique hyperplane separating the two corresponding alcoves. Note that  $H_{i_1}\in H_{p_\om,vp_{\tau}}$. We have 
\begin{align*}
T_xT_{vp_\la}&=T_{t_k\ldots t_{j_1+1}}T_{t_{j_1}}T_{\hat{t}_{j_1}\ldots t_1vp_{\tau}}\\
&=T_{t_k\ldots t_{j_1+1}}T_{t_j\ldots t_1vp_{\tau}}+(q-q^{-1})T_{t_k\ldots t_{j_1+1}}T_{\hat{t}_{j_1}\ldots t_1vp_{\tau}}
\end{align*}
Firstly we see that $H_{t_k\ldots t_{j_1+1},t_i\ldots t_1vp_{\la}}=H_{p_\om,vp_{\tau}}-\{H_{j_1}\}$. But $H_{p_\om,vp_\la}$ only contains one hyperplane of direction $\al_{j_1}$ hence there are no hyperplane of direction $\al_{j_1}$ in $H_{x,vp_{\tau}}-\{H_{j_1}\}$ and we have 
 $$I_{t_k\ldots t_{j_1+1},t_i\ldots t_1vp_{\la}}=I_{p_\om,vp_{\la}}-\{\al_{j_1}\}.$$
By Theorem \ref{bound first},  the maximal degree appearing in the coefficients in $T_{t_k\ldots t_{j_1+1}}T_{t_i\ldots t_1vp_{\tau}}$ is $\ell(w_0)-\ell(v)-1$. \\

Secondly, we can see that the set $\fH$ of hyperplanes separating the alcoves 
$$t_n\ldots \hat{t}_{j_1}\ldots t_1vp_{\la}A_0=t_n\ldots t_{j_1}\ldots t_1vp_{\la}A_0\si_{H_{j_1}}$$
and
$$\hat{t}_{j_1}\ldots t_1vp_{\la}A_0=t_{j_1}\ldots t_1vp_{\la}A_0\si_{H_{j_1}}$$
is included in $H_{p_\om,vp_{\la}}\cdot \si_{H_{j_1}}-\{H_{j_1}\}$ and this set contains  at most $\ell(w_0)-\ell(v)-1$ elements. But we have 
$$H_{t_k\ldots t_{j_1+1},\hat{t}_{j_1}\ldots t_1vp_{\la}}\subset \fH$$
hence if $\fH$ contains strictly less than  $\ell(w_0)-\ell(v)-1$ we cannot have a term of degree $\ell(w_0)-\ell(v)-1$ in $T_{t_k\ldots t_{i_1+1}}T_{\hat{t}_{i_1}\ldots t_1vp_{\la}}$.\\

By induction, we construct a set $I=\{j_1,\ldots,j_k\}$ with $k\leq \ell(w_0)-\ell(v)$. (For instance $j_2$ is the smallest integer such that  $t_{j_2}\ldots \hat{t}_{j_1}\ldots t_1vp_\la<\hat{t}_{j_2}\ldots \hat{t}_{j_1}\ldots t_1vp_\la.$) 
If we have $k= \ell(w_0)-\ell(v)$, then the term corresponding to $I$ in the product 
$T_{p_\om}T_{vp_\la}$ has a coefficient of degree $\ell(w_0)-\ell(v)$ otherwise there are none.\\

 \noindent
{\bf Proof of the Claim 3. }
The equality $p_{\om}v=p_{\om}u^{-1}w_0=u^{-1}p_{\om.\si_u}w_0$ is clear. 
Let $n(\si^{-1}_u):=\{\al\in {\Phi^+}\mid \al\si^{-1}_u\in {\Phi^-}\}$. It is a well known fact that $n(\si^{-1}_u)=\ell(u)$.
We define the following subset of ${\Phi}^+$:
\begin{align*}
\cP_0&=\{\al\in {\Phi^+}\mid ( \om\si_{u},\al ) =0\}\\
\cP_<&=\{\al\in {\Phi^+}\mid ( \om\si_{u},\al ) <0\}\\
\cP_>&=\{\al\in {\Phi^+}\mid ( \om\si_{u},\al) >0\}.
\end{align*}
Note that, if $\al\in n(\si_u)$, then  $( \om\si_{u},\al ) =( \om,\al\si^{-1}_u ) \leq 0$ since $\om\in \bP^{+}$. On the other hand, if $\al\si_u\in \Phi^+$ then  $( \om\si_{u},\al ) =( \om,\al\si^{-1}_u ) \geq 0$.   Therefore we see that 
 $\cP_<\subset n(\si^{-1}_u)$. Assume that there exists $\al\in n(\si^{-1}_u)$ such that $( \om\si_{u},\al ) =( \om,\al\si_u^{-1} ) =0$. Then $\al\si_u^{-1}\in \Phi'$, the root system generated by the simple system $\De'$. Further, since $\al\si_u^{-1}\in \Phi^-$, we have that $\al\si_u\in {\Phi'}^{-}$, hence we can write 
 $$\al\si^{-1}_{u}=\sum_{\ga\in \De'} a_\ga \ga \text{ where } a_\ga\leq 0$$
But then we obtain that 
 $$\al=\sum_{\ga\in \De'} a_\ga \ga\si_u \text{ where } a_\ga\leq 0.$$
 Since $v$ is of maximal length in its coset, we can check that $u$ is of minimal length in its right coset with respect to $W'$, that is $us>u$ for all $s\in S'$. Therefore, $\ga\si_u$ is a positive root for all $\ga\in \De'$ and the last equality is a contradiction with the fact $\al\in \Phi^+$. We have proved that $\cP_<=n(\si_u^{-1})$.
 
\medskip

Next, since the length of an element is equal to the number of hyperplanes separating the corresponding alcoves and since $p_{\om\si_u}w_0A_0=w_0A_0t_{-\om\si_u}$ we can see that 
\begin{align*}
\ell(p_{\om\si_u}w_0)&=\ell(w_0)+|\cP_{>}|-|\cP_{<}|\\
&=\ell(w_0)+\ell(p_\om)-2|\cP_{<}|\\
&= \ell(w_0)+\ell(p_\om)-2\ell(u).\\
\end{align*}
Finally we obtain
$$\ell(u^{-1})+\ell(p_{\om\si_u}w_0)=\ell(u)+ \ell(w_0)+\ell(p_\om)-2\ell(u)=\ell(p_\om)+(\ell(w_0)-\ell(u))=\ell(p_\om)+\ell(v)$$
The claim is proved. \\

\noindent
{\bf Proof of the Claim 4. }
Assume $\la-\om\si_u\notin P^-$. As we are in type $A$ we know that for all $\si\in W_0$ and all $\al\in \Phi$ we have $|(\om,\al\si)|\leq \pm 1$. As $\la\in P^{-}$, to have $\la-\om\si_u\notin P^-$,
we must have $(\la,\al_i)=0$ and $(\om\si_u,\al_i)=(\om,\al\si_u^{-1})=-1$. In turn this implies that $u^{-1}s_i<u^{-1}$. Thus the alcoves $w_0p_{\la}A_0$ and $u^{-1}w_0p_\la A_0=vp_{\la}A_0$ lies on two different sides of $H_{\al_i,0}$. This implies that $(x,\al_i)>0$ for all $x\in vp_{\la}A_0$. Thus there can't be any hyperplane of direction $\al_i$ in $H_{p_{\om}, vp_{\la}}$ as $A_0$ and $ vp_{\la}A_0$ both lie between $H_{\al_i,0}$ and $H_{\al_i,1}$. The Claim follows. 
\end{proof}


\subsection{Combinatorics of the cellular basis}
In this section we study the decomposition in the Kazhdan-Lusztig basis of the element $\bP(\tau)C_{w_0}$ for all $\tau\in P^+$. Recall that it is enough to find the expression of any element of the cellular basis in the Kazhdan-Lusztig basis, as noted in the beginning of Section \ref{dec-cel-basis}.\\
  
We denote by $\{\om_1,\ldots,\om_n\}$ the set of fundamental weights and for each $\om_i\in \bP^+$ we denote by $\cO(\om_i)=\{\om_i=\om_i^{(1)},\ldots,\om_i^{(N)}\}$ its orbit under the action of $W_0$. Let $x,y\in P^{-}$. A path of length $N$ in $P^-$ from $x$ to $y$ is an (ordered) sequence $x=x_0,x_1,\ldots,x_N=y$ in $P^{-}$ such that for all $1\leq \ell \leq N-1$ we have $x_{\ell+1}=x_\ell-\om^{(j_{\ell})}_{i_\ell}$ and $x_{\ell}\in P^-$. The element $\om^{(j_{\ell})}_{i_\ell}$ is called the $\ell$\up{th}-step of the path from $x$ to $y$. A path is said to be of type $\bm=(m_1,\ldots,m_N)\in\{1,\ldots,n\}^N$ if its $\ell$\up{th}-step lies in $\cO(\om_{m_\ell})$.
Let $\ga\in P^-$ and $\bm\in\{1,\ldots,n\}^N$. We define $\cP_{\bm}(\ga)$ to be the number of paths of type $\bm$ from $0$ to~$\ga$. 

\begin{Rem}
Let $\la\in P^-$ be such that $\la=-\sum a_i\om_i$ for some $a_i\geq 0$. Let $\bm=(m_1,\ldots,m_N)\in \{1,\ldots,n\}^{N}$ be any $N$-tuples which contains exactly $a_i$ i's for all $i$'s. Then $\cP_\bm(\la)=1$ as the $\ell$\up{th}-step needs to be equal to $\om_{m_\ell}^{1}=\om_{m_\ell}$.
\end{Rem}

\begin{Cor}
Let $(a_1,\ldots,a_n)\in \nN^n$, $\tau=\sum a_i\om_i\in P^{+}$ and $N=\sum a_i$. Let $\bm=(m_1,\ldots,m_N)\in \{1,\ldots,n\}^{N}$ be any $N$-tuples which contains exactly $a_i$ i's for all $i$'s.  Then the element $\bP(\tau)C_{w_0}$ of the cellular basis has the following expression in the Kazhdan-Lusztig basis :
$$\bP(\tau)C_{w_0}=\sum_{\la\in P^- }\cP_{\bm}(\la) C_{ w_0p_{\la}}$$
\end{Cor}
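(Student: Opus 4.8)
The plan is to prove the corollary by induction on $N = \sum a_i$, using the refined version of Theorem~\ref{general} (the theorem stating $\bP(\om)C_{w_0p_\la} = \sum_{\rho\in\cO(\om),\,\la-\rho\in P^-} C_{p_\rho w_0 p_\la}$) as the inductive step. Recall that by definition $\bP(\tau) = \bP(\om_{m_1})\bP(\om_{m_2})\cdots\bP(\om_{m_N})$ for any ordering $\bm$ of the fundamental weights occurring in $\tau$ with multiplicity, and that by Proposition~2.??? (part (2) of the proposition on commutativity, i.e. $\bP(\om)\bP(\om')C_{w_0}=\bP(\om')\bP(\om)C_{w_0}$) the element $\bP(\tau)C_{w_0}$ does not depend on the chosen ordering $\bm$. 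So it suffices to compute $\bP(\om_{m_1})\cdots\bP(\om_{m_N})C_{w_0}$ for a fixed but arbitrary $\bm$.

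First I would set up the induction. The base case $N=0$ gives $\tau = 0$ and $\bP(\tau)C_{w_0} = C_{w_0} = C_{w_0 p_0}$, while the only path of length $0$ from $0$ to $\la$ is the trivial one, forcing $\la = 0$, so $\cP_\bm(0)=1$ and all other $\cP_\bm(\la)=0$; the formula holds. For the inductive step, write $\bm = (m_1,\dots,m_N)$ and $\bm' = (m_1,\dots,m_{N-1})$, $\tau' = \tau - \om_{m_N}$, $N' = N-1$. By the induction hypothesis applied to $\tau'$ and $\bm'$,
$$\bP(\tau')C_{w_0} = \sum_{\mu\in P^-} \cP_{\bm'}(\mu)\, C_{w_0 p_\mu}.$$
Now I would apply $\bP(\om_{m_N})$ on the left. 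Using $\cM_+$ being a two-sided ideal (so $C_{w_0 p_\mu} = C_{p_{-\nu(\mu)} w_0}$ lies in $\cM_1$, hence so does $\bP(\om_{m_N})C_{w_0 p_\mu}$) together with the refined Theorem, one gets
$$\bP(\om_{m_N}) C_{w_0 p_\mu} = \sum_{\substack{\rho\in\cO(\om_{m_N})\\ \mu - \rho\in P^-}} C_{p_\rho w_0 p_\mu} = \sum_{\substack{\rho\in\cO(\om_{m_N})\\ \mu - \rho\in P^-}} C_{w_0 p_{\mu-\rho}},$$
where the last equality uses $p_\rho w_0 = w_0 p_{-\nu(\rho)}$ and the fact that $\cO(\om_{m_N})$ (hence the set of such $\rho$ ranging over $\mu-\rho \in P^-$) is stable — more carefully, one should note that $\{\mu - \rho : \rho\in\cO(\om_{m_N})\}$ as $\rho$ ranges suitably reparametrizes correctly; I would verify this reindexing carefully since $w_0$-action subtleties enter here. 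Summing over $\mu$ and collecting the coefficient of each $C_{w_0 p_\la}$, I obtain
$$\bP(\tau)C_{w_0} = \sum_{\la\in P^-} \Big(\sum_{\substack{\mu\in P^-,\ \rho\in\cO(\om_{m_N})\\ \mu-\rho = \la}} \cP_{\bm'}(\mu)\Big) C_{w_0 p_\la}.$$

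Finally I would identify the inner sum with $\cP_\bm(\la)$. A path of type $\bm$ from $0$ to $\la$ is exactly a path of type $\bm'$ from $0$ to some $\mu\in P^-$ followed by a final step $\rho\in\cO(\om_{m_N})$ with $\mu - \rho = \la$ (and automatically $\la\in P^-$); so $\cP_\bm(\la) = \sum_{\mu-\rho=\la} \cP_{\bm'}(\mu)$, matching the coefficient above. This completes the induction. The main obstacle I anticipate is purely bookkeeping: keeping the $w_0$-twisting straight throughout, i.e. making sure that the indexing set $\{\rho \in \cO(\om_{m_N}) : \mu - \rho \in P^-\}$ in the refined theorem, originally phrased in terms of $\la$ and $p_\rho w_0 p_\la$, is correctly transported into the "$C_{w_0 p_\bullet}$" normal form used to match the path recursion, and checking that one does not double count or miss contributions when $\rho \neq \rho'$ give the same $\mu - \rho$; given that $\cO(\om)$ consists of distinct weights this is harmless, but it should be stated. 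Everything else is a direct unwinding of definitions, so I would keep the write-up short.
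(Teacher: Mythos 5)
Your proposal follows essentially the same route as the paper: induction on $N$, with the refined single-weight formula $\bP(\om)C_{w_0p_\mu}=\sum_{\rho\in\cO(\om),\,\mu-\rho\in P^-}C_{p_\rho w_0 p_\mu}$ as the inductive step, the reindexing of $C_{p_\rho w_0 p_\mu}$ into the normal form $C_{w_0 p_\gamma}$, an exchange of the two summations, and the identification of the resulting coefficient with the path recursion $\cP_\bm(\la)=\sum_{\mu-\rho=\la}\cP_{\bm'}(\mu)$. The $w_0$-twisting bookkeeping you flag (the appearance of $\nu(\rho)$ rather than $\rho$ when commuting $p_\rho$ past $w_0$) is treated at the same level of detail in the paper's own proof, so your write-up matches it in both substance and rigor.
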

\begin{proof}
Let $\la\in P^-$ and let $\bm'=(m_1,\ldots,m_{N-1})$. Then we have
$$\cP_\bm(\la)=\sum_{\la'\in P^-, \la'-\la\in \cO(\om_{n_N})} \cP_{\bm'}(\la').$$
We have $\bP(\tau)C_{w_0}=\bP(\om_{m_N}) \ldots \bP(\om_{m_1})C_{w_0}.$ Let us prove the result by induction. For $N=1$ the result is true. Indeed we have $\bP(\om_i)C_{w_0}=C_{p_{\om_i}w_0}$ and there is only one path of type $(i)$ from $0$ to $\om_i$. 
Next assume that  we have 
$$\bP(\om_{n_{N-1}})\ldots \bP(\om_1)C_{w_0}=\sum_{\la\in P^- }\cP_{\bm'}(\la) C_{ w_0p_{\la}}\text{ where $\bm'=(m_{N-1},\ldots,1)$}.$$
Then
\begin{align*}
\bP(\om_{n_N}) \ldots \bP(\om_1)C_{w_0}&=\bP(\om_{n_N})\sum_{\la\in P^- }\cP_{\bm'}(\la) C_{ w_0p_{\la}}\\
&=\sum_{\la\in P^- }\cP_{\bm'}(\la) \bP(\om_{n_N})C_{ w_0p_{\la}}\\
&=\sum_{\la\in P^- }\cP_{\bm'}(\la) \bigg( \underset{\la-\ga\in \cO(\om_{i_n})}{\sum_{\ga\in P^-}}C_{ w_0p_{\ga}}\bigg)\\
&=\sum_{\la\in P^-}   \underset{\la-\ga\in \cO(\om_{n_N})}{\sum_{\ga\in P^-}} \cP_{\bm'}(\la)C_{ w_0p_{\ga}}\\
&=\sum_{\ga\in P^-}  \underset{\la-\ga\in \cO(\om_{n_N})}{\sum_{\la\in P^-}}\cP_{\bm'}(\la)C_{ w_0p_{\ga}}\\
&=\sum_{\ga\in P^-} \cP_\bm(\ga)C_{ w_0p_{\ga}}
\end{align*}
 as required. 
\end{proof}

\begin{Exa}
In this example, we study the case of the affine Weyl group of type $\tA_2$. 
The quarter $\cC^-=-\cC^+$ is represented in figure \ref{path}. We denote by $\om_1$ and $\om_2$ the fundamental weights. 
The set of anti-dominant weights, that is $-P^+$, is just the set of points lying in $\cC^-$ and at the intersection of $3$ hyperplanes. For each antidominant weight $\la\in P^-$, the red (respectively green)  arrows leaving from $\la$ represents the orbit of $\om_1$ (respectively $\om_2$)   under the action of $W_0$.\\

To find the expression of the cellular basis element $\bP(2\om_1+2\om_2)C_{w_0}$ one needs to count the number of path of type $(1,1,2,2)$ from $0$ to $\la$ for all $\la\in P^-$. This number (when non-zero) is indicated  in figure \ref{difpat}. Hence we find 
$$\bP(2\om_1+2\om_2)C_{w_0}=C_{p^2_{\om_1}p^2_{\om_2}w_0}+C_{p^3_{\om_1}w_0}+C_{p^3_{\om_2}w_0}+4C_{p_{\om_1}p_{\om_2}w_0}+2C_{w_0}.$$
The description of all paths form $0$ to $-(\om_1+\om_2)$ of type $(1,1,2,2)$ can be found in Figure~\ref{difpat}.

\begin{Rem}
The interpretation in term of paths also works for other types, however the situation gets more complicated as we get closer to the wall and one needs to define many different kind of paths depending on how close we are from each wall. 
\end{Rem}

\newpage
\vspace{-1cm}
\begin{figure}[h!]
\caption{The chamber $\cC^{-1}$}
\label{path}
\begin{center}
\psset{unit=.9cm}
\begin{pspicture}(-3,3.57)(4,-3.37)

\rput(.5,1.13){$1$}

\rput(-1,.386){$1$}

\rput(2,.386){$1$}

\rput(.5,-.4){$4$}

\rput(.5,-2.15){$2$}

\rput(1.1,-2.3){{\green{\tiny $-\om_2$}}}
\rput(-.1,-2.3){{\red{\tiny $-\om_1$}}}
\psline(.5,-2.598)(4,3.464)
\psline(.5,-2.598)(-3,3.464)

\rput(-1,0){\psline[linecolor=red,linewidth=.3mm]{->}(.19,-0.05)(.85,-0.05)}
\rput(-1,0){\psline[linecolor=green,linewidth=.3mm]{<-}(.19,0.05)(.85,0.05)}

\rput(0,0){\psline[linecolor=red,linewidth=.3mm]{->}(.19,-0.05)(.85,-0.05)}
\rput(0,0){\psline[linecolor=green,linewidth=.3mm]{<-}(.19,0.05)(.85,0.05)}

\rput(1,0){\psline[linecolor=red,linewidth=.3mm]{->}(.19,-0.05)(.85,-0.05)}
\rput(1,0){\psline[linecolor=green,linewidth=.3mm]{<-}(.19,0.05)(.85,0.05)}

\rput(-.5,-.866){\psline[linecolor=red,linewidth=.3mm]{->}(.19,-0.05)(.85,-0.05)}
\rput(-.5,-.866){\psline[linecolor=green,linewidth=.3mm]{<-}(.19,0.05)(.85,0.05)}

\rput(.5,-.866){\psline[linecolor=red,linewidth=.3mm]{->}(.19,-0.05)(.85,-0.05)}
\rput(.5,-.866){\psline[linecolor=green,linewidth=.3mm]{<-}(.19,0.05)(.85,0.05)}

\rput(0,-1.732){\psline[linecolor=red,linewidth=.3mm]{->}(.19,-0.05)(.85,-0.05)}
\rput(0,-1.732){\psline[linecolor=green,linewidth=.3mm]{<-}(.19,0.05)(.85,0.05)}

\rput(-1,0.1){\psline[linecolor=red,linewidth=.3mm]{->}(-0.05,0.1)(-.42,.75)}
\rput(-1,-.1){\psline[linecolor=green,linewidth=.3mm]{->}(-.44,.76)(-0.07,0.1)}

\rput(0,0.1){\psline[linecolor=red,linewidth=.3mm]{->}(-0.05,0.1)(-.42,.75)}
\rput(0,-.1){\psline[linecolor=green,linewidth=.3mm]{->}(-.44,.76)(-0.07,0.1)}

\rput(1,-.1){\psline[linecolor=green,linewidth=.3mm]{->}(-.44,.76)(-0.07,0.1)}

\rput(2,0.1){\psline[linecolor=red,linewidth=.3mm]{->}(-0.05,0.1)(-.42,.75)}
\rput(2,-.1){\psline[linecolor=green,linewidth=.3mm]{->}(-.44,.76)(-0.07,0.1)}

\rput(-.5,-0.766){\psline[linecolor=red,linewidth=.3mm]{->}(-0.05,0.1)(-.42,.75)}
\rput(-.5,-.966){\psline[linecolor=green,linewidth=.3mm]{->}(-.44,.76)(-0.07,0.1)}

\rput(.5,-0.766){\psline[linecolor=red,linewidth=.3mm]{->}(-0.05,0.1)(-.42,.75)}
\rput(.5,-.966){\psline[linecolor=green,linewidth=.3mm]{->}(-.44,.76)(-0.07,0.1)}

\rput(1.5,-.966){\psline[linecolor=green,linewidth=.3mm]{->}(-.44,.76)(-0.07,0.1)}

\rput(0,-1.632){\psline[linecolor=red,linewidth=.3mm]{->}(-0.05,0.1)(-.42,.75)}
\rput(0,-1.832){\psline[linecolor=green,linewidth=.3mm]{->}(-.44,.76)(-0.07,0.1)}

\rput(1,-1.632){\psline[linecolor=red,linewidth=.3mm]{->}(-0.05,0.1)(-.42,.75)}
\rput(1,-1.832){\psline[linecolor=green,linewidth=.3mm]{->}(-.44,.76)(-0.07,0.1)}

\rput(0.5,-2.498){\psline[linecolor=red,linewidth=.3mm]{->}(-0.05,0.1)(-.42,.75)}
\rput(0.5,-2.698){\psline[linecolor=green,linewidth=.3mm]{->}(-.44,.76)(-0.07,0.1)}

\rput(-0.9,0){\psline[linecolor=green,linewidth=.3mm]{->}(0,0)(.45,.8)}
\rput(-1.1,0){\psline[linecolor=red,linewidth=.3mm]{->}(.5,.866)(0.05,0.05)}

\rput(0.1,0){\psline[linecolor=green,linewidth=.3mm]{->}(0,0)(.45,.8)}
\rput(-0.1,0){\psline[linecolor=red,linewidth=.3mm]{->}(.5,.866)(0.05,0.05)}

\rput(1.1,0){\psline[linecolor=green,linewidth=.3mm]{->}(0,0)(.45,.8)}
\rput(0.9,0){\psline[linecolor=red,linewidth=.3mm]{->}(.5,.866)(0.05,0.05)}

\rput(2.1,0){\psline[linecolor=green,linewidth=.3mm]{->}(0,0)(.45,.8)}
\rput(1.9,0){\psline[linecolor=red,linewidth=.3mm]{->}(.5,.866)(0.05,0.05)}

\rput(-.4,-.866){\psline[linecolor=green,linewidth=.3mm]{->}(0,0)(.45,.8)}
\rput(-0.6,-.866){\psline[linecolor=red,linewidth=.3mm]{->}(.5,.866)(0.05,0.05)}

\rput(.6,-.866){\psline[linecolor=green,linewidth=.3mm]{->}(0,0)(.45,.8)}
\rput(0.4,-.866){\psline[linecolor=red,linewidth=.3mm]{->}(.5,.866)(0.05,0.05)}

\rput(1.6,-.866){\psline[linecolor=green,linewidth=.3mm]{->}(0,0)(.45,.8)}
\rput(1.4,-.866){\psline[linecolor=red,linewidth=.3mm]{->}(.5,.866)(0.05,0.05)}

\rput(.1,-1.732){\psline[linecolor=green,linewidth=.3mm]{->}(0,0)(.45,.8)}
\rput(-.1,-1.732){\psline[linecolor=red,linewidth=.3mm]{->}(.5,.866)(0.05,0.05)}

\rput(.9,-1.732){\psline[linecolor=red,linewidth=.3mm]{->}(.5,.866)(0.05,0.05)}

\rput(.4,-2.598){\psline[linecolor=red,linewidth=.3mm]{->}(.5,.866)(0.05,0.05)}

\rput(1.1,-1.732){\psline[linecolor=green,linewidth=.3mm]{->}(0,0)(.45,.8)}
\rput(.6,-2.598){\psline[linecolor=green,linewidth=.3mm]{->}(0,0)(.45,.8)}
\rput(1.5,-0.766){\psline[linecolor=red,linewidth=.3mm]{->}(-0.05,0.1)(-.42,.75)}
\rput(1,0.1){\psline[linecolor=red,linewidth=.3mm]{->}(-0.05,0.1)(-.42,.75)}


\psline(-1,0)(2,0)
\psline(-2,1.732)(3,1.732)
\psline(-3,3.464)(4,3.464)
\psline(0,-1.732)(1,-1.732)

\psline(-1.5,0.866)(2.5,0.866)
\psline(-2.5,2.598)(3.5,2.598)

\psline(-0.5,-0.866)(1.5,-0.866)

\psline(-.5,-.866)(2,3.464)
\psline(0,-1.732)(3,3.464)
\psline(-1,0)(1,3.464)
\psline(-1.5,.866)(0,3.464)
\psline(-2,1.732)(-1,3.464)
\psline(-2.5,2.598)(-2,3.464)

\psline(1,-1.732)(-2,3.464)
\psline(1.5,-.866)(-1,3.464)
\psline(2,0)(0,3.464)
\psline(2.5,0.866)(1,3.464)
\psline(3,1.732)(2,3.464)
\psline(3.5,2.598)(3,3.464)

\end{pspicture}
\end{center}
\end{figure}

\vspace{-1.5cm}

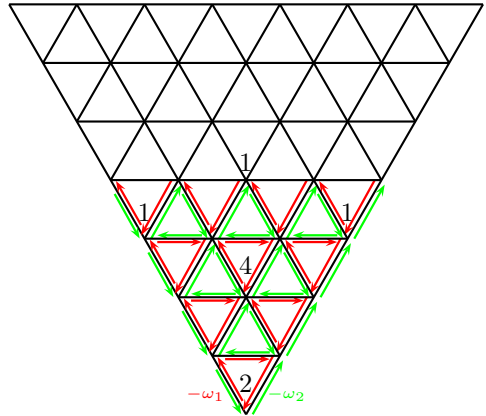
\begin{figure}[h!]
\caption{Paths from  $0$ to $-(\om_1+\om_2)$ of type $(1,1,2,2)$}
\label{difpat}
$$\begin{array}{cccccccc}
\psset{unit=1cm}
\begin{pspicture}(-1,0)(1,1)

\psline(0,0)(.866,1.5)
\psline(0,0)(-.866,1.5)
\psline(.433,.75)(-.433,.75)
\psline(.866,1.5)(-.866,1.5)
\psline(-.433,.75)(0,1.5)
\psline(.433,.75)(0,1.5)
\psline(-.866,1.5)(-.433,2.25)
\psline(0,1.5)(-.433,2.25)
\psline(0,1.5)(.433,2.25)
\psline(.866,1.5)(.433,2.25)
\psline(-.433,2.25)(.433,2.25)

\psline[linecolor=green,linewidth=.3mm]{->}(0,0)(.433,.75)
\psline[linecolor=green,linewidth=.3mm]{->}(.433,.75)(.866,1.5)
\psline[linecolor=red,linewidth=.3mm]{->}(.866,1.5)(.433,2.25)
\psline[linecolor=red,linewidth=.3mm]{->}(.433,2.25)(0,1.5)

\end{pspicture}
&
\begin{pspicture}(-1,0)(1,2)

\psline(0,0)(.866,1.5)
\psline(0,0)(-.866,1.5)
\psline(.433,.75)(-.433,.75)
\psline(.866,1.5)(-.866,1.5)
\psline(-.433,.75)(0,1.5)
\psline(.433,.75)(0,1.5)
\psline(-.866,1.5)(-.433,2.25)
\psline(0,1.5)(-.433,2.25)
\psline(0,1.5)(.433,2.25)
\psline(.866,1.5)(.433,2.25)
\psline(-.433,2.25)(.433,2.25)

\psline[linecolor=green,linewidth=.3mm]{->}(0,0)(.433,.75)
\psline[linecolor=green,linewidth=.3mm]{->}(.433,.75)(.866,1.5)
\psline[linecolor=red,linewidth=.3mm]{<-}(.37,.75)(.803,1.5)
\psline[linecolor=red,linewidth=.3mm]{->}(.433,.75)(0,1.5)

\end{pspicture}

&
\begin{pspicture}(-1,0)(1,2)

\psline(0,0)(.866,1.5)
\psline(0,0)(-.866,1.5)
\psline(.433,.75)(-.433,.75)
\psline(.866,1.5)(-.866,1.5)
\psline(-.433,.75)(0,1.5)
\psline(.433,.75)(0,1.5)
\psline(-.866,1.5)(-.433,2.25)
\psline(0,1.5)(-.433,2.25)
\psline(0,1.5)(.433,2.25)
\psline(.866,1.5)(.433,2.25)
\psline(-.433,2.25)(.433,2.25)

\psline[linecolor=green,linewidth=.3mm]{->}(0,0)(.433,.75)
\psline[linecolor=green,linewidth=.3mm]{->}(.433,.75)(-.433,.75)
\psline[linecolor=red,linewidth=.3mm]{->}(-.433,.75)(-.866,1.5)
\psline[linecolor=red,linewidth=.3mm]{->}(-.866,1.5)(0,1.5)

\end{pspicture}
&
\begin{pspicture}(-1,0)(1,2)

\psline(0,0)(.866,1.5)
\psline(0,0)(-.866,1.5)
\psline(.433,.75)(-.433,.75)
\psline(.866,1.5)(-.866,1.5)
\psline(-.433,.75)(0,1.5)
\psline(.433,.75)(0,1.5)
\psline(-.866,1.5)(-.433,2.25)
\psline(0,1.5)(-.433,2.25)
\psline(0,1.5)(.433,2.25)
\psline(.866,1.5)(.433,2.25)
\psline(-.433,2.25)(.433,2.25)

\psline[linecolor=green,linewidth=.3mm]{->}(0,0)(.433,.75)
\psline[linecolor=green,linewidth=.3mm]{->}(.433,.75)(-.433,.75)
\psline[linecolor=red,linewidth=.3mm]{<-}(.433,.79)(-.433,.79)
\psline[linecolor=red,linewidth=.3mm]{->}(.433,.75)(0,1.5)

\end{pspicture}

\end{array}$$
\end{figure}
\end{Exa}

\bibliographystyle{plain}

\end{document}